\newtheorem{theorem}{Theorem}[section]
\newtheorem{lemma}[theorem]{Lemma}
\newtheorem{corollary}[theorem]{Corollary}
\newtheorem{conjecture}[theorem]{Conjecture}
\theoremstyle{definition}
\newtheorem{definition}[theorem]{Definition}
\theoremstyle{remark}
\numberwithin{equation}{section}
\begin{document}

\title[$2$-Elongated Plane Partitions and Powers of 7]{$2$-Elongated Plane Partitions and Powers of 7: The Localization Method Applied to a Genus 1 Congruence Family}


\author{Koustav Banerjee}
\address{}
\curraddr{}
\email{}
\thanks{}

\author{Nicolas Allen Smoot}
\address{}
\curraddr{}
\email{}
\thanks{}

\keywords{Partition congruences, infinite congruence family, modular functions, plane partitions, partition analysis, modular curve, Riemann surface}

\subjclass[2010]{Primary 11P83, Secondary 30F35}

\date{}

\dedicatory{}

\begin{abstract}
Over the last century, a large variety of infinite congruence families have been discovered and studied, exhibiting a great variety with respect to their difficulty.  Major complicating factors arise from the topology of the associated modular curve: classical techniques are sufficient when the associated curve has cusp count 2 and genus 0.  Recent work has led to new techniques that have proven useful when the associated curve has cusp count greater than 2 and genus 0.  We show here that these techniques may be adapted in the case of positive genus.  In particular, we examine a congruence family over the 2-elongated plane partition diamond counting function $d_2(n)$ by powers of 7, for which the associated modular curve has cusp count 4 and genus 1.  We compare our method with other techniques for proving genus 1 congruence families, and conjecture a second congruence family by powers of 7, which may be amenable to similar techniques. 
\end{abstract}

\maketitle

\section{Introduction}

\subsection{Background}

The first great breakthrough in the study of the arithmetic properties of partitions began with the justifiably celebrated infinite congruence families discovered by Ramanujan in 1918 \cite{Ramanujan} and refined by Watson in 1938 \cite{Watson}:

\begin{theorem}\label{thorigconjecR}
\begin{align*}
\text{If } n,\alpha\in\mathbb{Z}_{\ge 1} \text{ and } 24n\equiv 1\pmod{\ell^{\alpha}},&\text{ then } p(n)\equiv 0\pmod{\ell^{\beta}}\text{, with}
\end{align*}\begin{align*}
\beta := 
\begin{cases}
\alpha & \text{ if } \ell\in\{5,11\},\\
\left\lfloor \frac{\alpha}{2} \right\rfloor + 1 & \text{ if } \ell=7.
\end{cases}
\end{align*}
\end{theorem}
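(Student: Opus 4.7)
The plan is to establish the result via the theory of modular functions on $\Gamma_0(\ell)$, following the framework that the paper itself generalizes. For each prime $\ell \in \{5,7,11\}$ and each $\alpha \geq 1$, define $r_\alpha$ to be the unique integer with $0 \leq r_\alpha < \ell^\alpha$ and $24 r_\alpha \equiv 1 \pmod{\ell^\alpha}$, and set
\[
L_\alpha(q) := \sum_{n \geq 0} p(\ell^\alpha n + r_\alpha)\, q^n.
\]
The first step is to renormalize each $L_\alpha$ by multiplying by an appropriately chosen eta-quotient $f_\alpha$ so that $L_\alpha \cdot f_\alpha$ is a modular function on $\Gamma_0(\ell)$, holomorphic on the upper half-plane, with all poles confined to a single cusp. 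Since $X_0(\ell)$ has genus $0$ and cusp count $2$ for $\ell \in \{5,7,11\}$, its function field is $\mathbb{C}(t)$ for a suitable Hauptmodul $t$ (e.g., an eta-quotient with a zero at one cusp and a pole at the other), and $L_\alpha \cdot f_\alpha$ may be written as a polynomial in $t$ with rational coefficients.

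The second step is to set up an induction on $\alpha$ via the operator $U_\ell : \sum a_n q^n \mapsto \sum a_{\ell n}q^n$. Using the classical dissection identity for $U_\ell$, together with the transformation properties of $f_\alpha$, I would show
\[
U_\ell\bigl(L_\alpha \cdot f_\alpha\bigr) = L_{\alpha+1} \cdot f_{\alpha+1}^*
\]
for an eta-quotient $f_{\alpha+1}^*$ closely related to $f_{\alpha+1}$. Thus the inductive step reduces to understanding how $U_\ell$ acts on the $\mathbb{Z}$-module generated by the monomials $t^n \cdot f_\alpha$. The central arithmetic lemma is that $U_\ell$ maps this module into itself in a way that contributes an extra factor of $\ell$ (or, for $\ell = 7$, a factor of $7$ every two iterations), giving a matrix of $U_\ell$ whose entries have controlled $\ell$-adic valuations.

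The third step is to combine the base case ($\alpha = 1$) with the inductive bound. For $\alpha = 1$, direct computation using Ramanujan's identities for $\sum p(\ell n + r_1)q^n$ establishes divisibility by $\ell$ (or by $7$), giving the initial polynomial in $t$ whose coefficients are divisible by $\ell^{\beta(1)}$. Iterating the $U_\ell$ action then yields $\ell^{\beta(\alpha)} \mid L_\alpha \cdot f_\alpha$ coefficientwise, and since $f_\alpha$ has unit leading coefficient as a $q$-series, the congruence transfers to $L_\alpha$ itself.

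The main obstacle is the case $\ell = 7$. For $\ell \in \{5, 11\}$ each application of $U_\ell$ gains a clean factor of $\ell$, yielding $\beta = \alpha$. For $\ell = 7$, however, the naive bound is too weak: the matrix of $U_7$ on the filtered basis $\{t^n f_\alpha\}$ acquires only a half-step of $7$-adic growth per iteration, producing the floor function $\lfloor \alpha/2\rfloor + 1$ rather than $\alpha$. Proving the correct exponent therefore requires a careful two-step induction: one tracks paired applications of $U_7$, showing that two iterations together contribute a full factor of $7$ beyond the trivial one, as in Watson's original refinement of Ramanujan's conjecture.
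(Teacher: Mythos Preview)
The paper does not prove this theorem; it is stated as historical background, with citations to Ramanujan, Watson, and Atkin for the cases $\ell=5,7,11$ respectively. So there is no ``paper's own proof'' to compare against, only the references.

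Your outline is broadly correct for $\ell\in\{5,7\}$, but contains a genuine error for $\ell=11$: you assert that $\mathrm{X}_0(\ell)$ has genus $0$ and cusp count $2$ for all three primes, so that the function field is $\mathbb{C}(t)$ for a single Hauptmodul $t$. This is false for $\ell=11$. As the paper explicitly notes in the paragraph following the theorem, $\mathrm{X}_0(11)$ has genus $1$; there is no Hauptmodul, and $\mathcal{M}^0(\mathrm{X}_0(11))$ is not a polynomial ring in one variable but rather has the form $\mathbb{C}[x]\oplus y\,\mathbb{C}[x]$ by the Weierstra\ss\ gap theorem. Your entire third-step argument (writing $L_\alpha f_\alpha$ as a polynomial in $t$, tracking $U_\ell$ on the basis $\{t^n f_\alpha\}$) therefore collapses for $\ell=11$.

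This is not a cosmetic issue. The genus-$1$ obstruction is precisely why the $\ell=11$ case resisted proof for nearly fifty years after Ramanujan, until Atkin's 1967 work; Atkin had to work with two generators and track the resulting two-variable structure under $U_{11}$, much as the present paper does with $x,y$ on $\mathrm{X}_0(14)$. Your proposal, as written, would reproduce Watson's argument for $\ell=5,7$ but gives no indication of how to handle the additional generator, the algebraic relation it satisfies, or the more delicate $11$-adic bookkeeping that Atkin needed.
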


The cases of this theorem for $\ell=5,7$ are comparatively easy to understand.  Ramanujan may have understood the proof of the case of $\ell=5$ as early as 1918 \cite{BO}, and the proof of the case of $\ell=7$, published by Watson in 1938 \cite{Watson}, is similar in form.  The case for $\ell=11$ is far more difficult, and a proof was not found before Atkin's work in 1967 \cite{Atkin}.

In the decades since, an enormous variety of partition functions and generalizations have been shown to exhibit divisibility properties which at least superficially resemble those of Ramanujan's congruences.  Moreover---again as with Ramanujan's congruences---the difficulty of understanding these congruence families varies enormously.

The reason that such congruence families occur at all is not fully understood, nor is the substantial range in the difficulty of different families.  We do know that each congruence family for a given partition function $a(n)$ is generally associated with a sequence of meromorphic functions over a compact Riemann surface.  Indeed, if $a(n)$ is enumerated by an eta quotient, then this surface is a classical modular curve $\mathrm{X}_0(N)$ for some $N\ge 1$. The topology of that curve appears to be critical in understanding why some congruence families are easier to prove than others. 

There are two topological numbers of $\mathrm{X}_0(N)$ which are important to us: the genus and the cusp count.  In the case of Theorem \ref{thorigconjecR} for $\ell=5,7$ the genus is 0.  For $\ell=11$ the genus is 1.  The cusp count is 2 in all three cases.

In recent years the study of congruence families associated with modular curves of a more complex topology have given rise to new techniques, especially those embodied in the localization method.  These techniques have been useful when studying congruence families in which the associated modular curve has genus 0 and cusp count 4.

On his first approach using these methods, Smoot argued \cite{Smoot0} that the techniques of localization would not succeed when applied to a congruence family associated with a curve of nonzero genus.  In the central result of this paper we delightfully acknowledge that we were mistaken.  We have successfully applied localization to the following congruence family for the 2-elongated plane partition diamond function $d_2(n)$, introduced by Andrews and Paule \cite{AndrewsPaule}: 

\begin{theorem}\label{Thm12}
Let $n,\alpha\in\mathbb{Z}_{\ge 1}$ such that $8n\equiv 1\pmod{7^{\alpha}}$.  Then $d_2(n)\equiv 0\pmod{7^{\left\lfloor\alpha/2\right\rfloor}}$.
\end{theorem}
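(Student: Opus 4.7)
My plan is to adapt the localization framework of Paule–Radu and Smoot to this genus 1 setting. I would begin by writing $\sum_{n\ge 0} d_2(n) q^n$ as an eta quotient, then define an Atkin-type operator $U$ that extracts the subsequence $\{d_2(n) : 8n \equiv 1 \pmod{7^{\alpha}}\}$. Iterating this operator and multiplying by a suitable auxiliary eta quotient would produce a sequence $L_\alpha$ of modular functions on some $\Gamma_0(N)$, with principal parts confined to a chosen subset of the cusps of the associated genus 1, 4-cusp curve.

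Next, I would set up the algebraic structure in which the $L_\alpha$ live. Because the associated curve has positive genus, its function field is not generated by a single Hauptmodul; instead I would identify two eta-quotient generators $x,y$ tied by an elliptic algebraic relation $F(x,y) = 0$, and describe the space of modular functions with poles only at the designated cusps as a free $\mathbb{Z}[x]$-module of rank $d$ with basis $\{1, y, \ldots, y^{d-1}\}$ (dimensions verified via Riemann–Roch). Localizing at the prime $7$, I would produce a filtered chain of submodules $\mathcal{M}_\alpha$ whose elements exhibit growing 7-adic divisibility, and verify that $L_\alpha \in \mathcal{M}_\alpha$ for every $\alpha$.

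The core step is then to compute the $U$-action on each generator $y^i$ of the module, reduce higher powers of $y$ using $F(x,y) = 0$, and extract a fundamental recursion expressing $L_{\alpha+2}$ as a $\mathbb{Z}[x]$-combination of the $y^i L_\alpha$ ($0 \le i < d$) whose coefficients have controlled 7-adic valuations. Combined with the initial cases $\alpha \in \{1,2\}$ (verified by direct computation) and the filtration on the module, this recursion would close an induction on $\alpha$ yielding the divisibility $7^{\lfloor \alpha/2 \rfloor} \mid d_2(n)$ whenever $8n\equiv 1\pmod{7^\alpha}$.

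The main obstacle is precisely the genus 1 step. In the genus 0, cusp-count-4 case the polynomial ring $\mathbb{Z}[\phi]$ in a single Hauptmodul makes the localization transparent; here the algebraic relation between $x$ and $y$ forces the module to have rank $d>1$, and the $U$-action mixes basis elements, so the 7-adic bookkeeping must survive reduction modulo $F(x,y)$. Choosing generators $x,y$ whose cusp divisors are compatible with the $U$-operator in such a way that the induction closes at the \emph{sharp} exponent $\lfloor\alpha/2\rfloor$ (rather than some weaker exponent) is the delicate point, and will likely require computer-assisted experimentation guided by the cusp divisor data of the genus 1 curve.
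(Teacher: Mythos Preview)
Your high-level plan matches the paper's: build the sequence $L_\alpha$ of modular functions on $\Gamma_0(14)$, alternate two $U$-operators, represent everything in a rank-2 module $\mathbb{Z}[x]\oplus y\mathbb{Z}[x]$ localized at powers of $1+7x$, and close an induction on $\alpha$. The paper does exactly this, with $x,y$ arising from explicit eta quotients and the recursion driven by a degree-14 modular equation relating $x(\tau)$ to $x(7\tau)$ (not by an elliptic relation $F(x,y)=0$ between $x$ and $y$; since $\mathrm{ord}_{[0]}x=-2$ and $\mathrm{ord}_{[0]}y=-3$, no $y^j$ with $j\ge 2$ ever appears).

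The genuine gap is your expectation that, with well-chosen generators, the $7$-adic bookkeeping ``survives'' and the induction closes. In the paper it \emph{does not} close directly: after applying $U^{(1)}$ to a typical element of the odd-index space $\mathcal{V}^{(1)}$, a finite but nonempty set of monomials $y^\gamma x^r$ fails to acquire the extra factor of $7$ required (these ``deviant cases'' are tabulated explicitly for small $m,r$). No clever choice of $x,y$ eliminates them. The paper's new idea is to encode the required cancellations among the coefficients $s_\beta(m)$ of $L_{2\alpha-1}$ as a \emph{congruence ideal} $I(\alpha)\le(\mathbb{Z}/49\mathbb{Z})[\mathbf{X}]$, generated by four linear forms, and then to prove \emph{ideal stability}: $I(\alpha+1)\subseteq I(\alpha)$. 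Together with the verified base case $I(1)=(0)$ this forces $I(\alpha)=(0)$ for all $\alpha$, which is precisely the statement that the deviant terms cancel and one full power of $7$ is gained at each even step. A second subtlety you do not anticipate is that already $L_1$, expanded in $x,y$, has coefficients with $7$ in the denominator; these are absorbed into a fixed integral auxiliary function $r_L=\frac{1}{7}(x+3x^2+x^3+6y+5xy)$ whose integrality is itself a consequence of the congruence-ideal relations.

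In short, your proposal correctly identifies the localization framework and the rank-2 module structure, but it treats the closure of the induction as a matter of generator choice and computer search. The paper shows this is insufficient and supplies the missing mechanism: the congruence-ideal sequence and its stability under $U^{(0)}\circ U^{(1)}$.
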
  This family was conjectured by Banerjee, and is associated with the modular curve $\mathrm{X}_0(14)$.  This curve has cusp count 4 and genus 1.

The implications for this breakthrough are substantial.  Unlike other methods for proving congruence families on curves of nontrivial topology, the techniques of localization are made to exploit an algebraic structure which arises naturally from the curve itself.  These techniques provide a straightforward procedure for anyone who wishes to prove a given congruence family in an algorithmic fashion.  

To date, some families have been shown to be resistant to such techniques.  We have already attempted to adapt such techniques to a proof of the celebrated Andrews--Sellers congruence family \cite{Paule}---whose associated modular curve has genus 1 and cusp count 6.  Our attempts have thus far failed.  Comparing the difficulty of proving this case to our success in proving Theorem \ref{Thm12}, we speculate that the most important contributor to the overall difficulty of a congruence family is the cusp count---the genus appears to play an insignificant role with respect to the accessibility of an associated congruence family to proof by our methods.

This is not to say that the genus has no effect at all---rather, it appears that the genus significantly complicates the ``bookkeeping" aspect of the problem.  An enormous amount of information must be carefully tracked and recorded.  In particular, we study each case of Theorem \ref{Thm12} with a given modular generating function.  We want to represent each of these functions in terms of some convenient basis functions over which we have a greater deal of control.  This representation necessitates some peculiarities in the algebraic structure of the associated localization rings which we must account for.  These problems are shared to some extent with those of Theorem \ref{thorigconjecR} in the case that $\ell=11$.  

\subsection{$k$-Elongated Plane Partition Diamonds}

In previous papers, the authors have examined a generalization of $p(n)$ in the form of the counting function for $k$-elongated plane partition diamonds, $d_k(n)$.  These counting functions are enumerated by the generating function \cite[equation (7.3)]{AndrewsPaule}
\begin{align*}
D_k(q) := \sum_{n=0}^{\infty}d_k(n)q^n = \prod_{m=1}^{\infty}\frac{(1-q^{2m})^k}{(1-q^m)^{3k+1}}.
\end{align*}  Notice that this is a class of generalizations for the unrestricted partition function, as $d_0(n)=p(n)$.  One appeal to studying these functions is that they are interesting arithmetic objects in and of themselves.

However, what makes them particularly interesting to us is the fact that when congruence families arise, they appear without exception to be associated with modular curves of cusp count 4.

Such families have been proved for $d_2(n)$ over powers of 3 by Smoot \cite{Smoot2}, $d_5(n)$ over powers of 5 by Banerjee and Smoot \cite{Banerjee}, and $d_7(n)$ over powers of 2 by Sellers and Smoot \cite{SellersS}.  In these cases, the associated modular curves ($\mathrm{X}_0(6)$, $\mathrm{X}_0(10)$, and $\mathrm{X}_0(8)$ respectively) have genus 0 and cusp count 4.  The case for $d_7(n)$ in \cite{SellersS} is peculiar, in that it admits a much simpler proof than is expected.  However, our proof of Theorem \ref{Thm12} will resemble the proofs in \cite{Smoot2} and \cite{Banerjee} in form.

As in these cases, we begin by building a sequence of generating functions $\mathcal{L}:=\left(L_{\alpha}\right)_{\alpha\ge 1}$, each with a prefactor designed to make each member of the sequence a modular functions, say $(\Phi_{\alpha})_{\alpha \ge 1}$, over the associated congruence subgroup $\Gamma_0(14)$ (equivalently, a meromorphic function over $\mathrm{X}_0(14)$).
\begin{align}
L_{\alpha} := \Phi_{\alpha}\cdot\sum_{8n\equiv 1\bmod{7^{\alpha}}}d_2(n)q^{\left\lfloor n/7^{\alpha}\right\rfloor+1},\label{lalphadefn}
\end{align} with $q := e^{2\pi i\tau}$, $\tau\in\mathbb{H}$.  We also construct an alternating sequence of linear operators, $U^{(0)}$ and $U^{(1)}$, such that 
\begin{align}
L_{\alpha+1} = U^{(\alpha)}\left( L_{\alpha} \right),\label{ulagtlap1}
\end{align} with $U^{(\alpha)}\in\left\{U^{(0)},U^{(1)}\right\}$ depending on the parity of $\alpha$.

The idea is to represent each $L_{\alpha}$ in terms of some convenient reference functions that arise naturally from the space, and then to study the application of $U^{(\alpha)}$ on these functions.

To do this, we note that because $\mathrm{X}_0(14)$ is compact, any function which is holomorphic along the whole curve must be a constant.  Thus, our non-constant functions must have a pole somewhere.  We consider the space of functions on $\mathrm{X}_0(14)$ which have a pole only at a single point of the curve---say, at the cusp $[0]$ (see the next section for details on the construction of $\mathrm{X}_0(14)$). 

The Weierstraß gap theorem \cite{PRW} (see also \cite[Sect. III.5.3]{FK}) dictates that the space of all such functions has the form
\begin{align}
\mathcal{M}^{0}\left( \mathrm{X}_0(14) \right) = \mathbb{C}[x]\oplus y\mathbb{C}[x],\label{c0fspace}
\end{align} for some functions $x,y$.  In particular, $x$ ought to have order $-2$ at $[0]$, while $y$ has order $-3$ at the same.

Notice from (\ref{lalphadefn}) that our functions $L_{\alpha}$ have positive order at $[\infty]$.  If we were working over a classical modular curve of cusp count 2, $L_{\alpha}$ would not have negative order anywhere besides the cusp $[0]$.  However, we are working with $\mathrm{X}_0(14)$, a curve of cusp count 4; consequently, $L_{\alpha}$ may have poles at two other points besides $[0]$.

Therefore, to kill these other possible poles, we consider multiplying $L_{\alpha}$ by some function $z$ which lives in $\mathcal{M}^{0}\left( \mathrm{X}_0(14) \right)$ and which has positive order at all other poles of $L_{\alpha}$.  It is useful that algorithms exist for which we may construct a suitable eta quotient; for example, the construction of $\mu$ in \cite{Radu}.  One such quotient has the form
\begin{align*}
z := z(\tau) &= \frac{(q^2;q^2)_{\infty}^{7}(q^7;q^7)_{\infty}}{(q;q)_{\infty}^7 (q^{14};q^{14})_{\infty}}.
\end{align*}  One can immediately notice that $z\equiv 1\pmod{7}$.  This is useful in that multiplying $L_{\alpha}$ by $z$ will not affect divisibility (or lack thereof) by 7.  Moreover, $z$ has order $-2$ at $[0]$.  If we define 
\begin{align*}
x := \frac{z-1}{7},
\end{align*} then $x$ satisfies the condition of (\ref{c0fspace}).  Thus,
\begin{align*}
(1+7x)^n L_{\alpha}
\end{align*}  can be made a member of $\mathcal{M}^{0}\left( \mathrm{X}_0(14) \right)$ for an appropriately chosen $n$. There are different options for the function $y$.  One notable option is 
\begin{align*}
y_0 := y_0(\tau) &= \frac{(q^2;q^2)_{\infty}^{4}(q^7;q^7)_{\infty}^8}{(q;q)_{\infty}^8 (q^{14};q^{14})_{\infty}^4}.
\end{align*}  It will be slightly more useful to choose
\begin{align*}
y := \frac{y_0-1}{8},
\end{align*} which follows from $y_0\equiv 1\pmod{8}$.  We should expect that each $L_{\alpha}$ has a useful representation as a member of the localization ring $\mathbb{Z}[x]_{\mathcal{S}}\oplus y\mathbb{Z}[x]_{\mathcal{S}}$, in which
\begin{align*}
\mathcal{S} := \left\{ (1+7x)^n : n\ge 0 \right\}.
\end{align*}  The coefficients in the numerator of each expression should then give us some information regarding the divisibility of $L_{\alpha}$ by powers of 7.

Aside from the comparatively easy problem of finding functions $x,y$ which are suitable for us, this procedure so far is very straightforward.  Various complications will indeed emerge from this approach, but they do so in a manner which allows us to very precisely state where these complications arise.

Let us examine $L_1$, which we define in the following manner
\begin{align*}
L_{1} &= \frac{(q^7;q^7)^{7}_{\infty}}{(q^{14};q^{14})^2_{\infty}}\cdot\sum_{n=0}^{\infty} d_2(7n + 1)q^{n+1}
\end{align*} (see Section 2 for a precise definition of every $L_{\alpha}$).  Notice that $L_1\in\mathbb{Z}[[q]]$.  
Generally, we expect that for some integer sequence $\left(\psi(\alpha)\right)_{\alpha\ge 1}$
\begin{align*}
\frac{(1+7x)^{\psi}}{7^{\left\lfloor \alpha/2 \right\rfloor}}\cdot L_{\alpha}\in\mathbb{Z}[x]\oplus y\mathbb{Z}[x].
\end{align*} 
Now in this case we do not expect to find divisibility by 7, since $\left\lfloor 1/2 \right\rfloor = 0$. In particular, we expect
\begin{align*}
(1+7x)^{\psi}\cdot L_{1}\in\mathbb{Z}[x]\oplus y\mathbb{Z}[x].
\end{align*}  However, a strange complication emerges, as we see in examining $L_1$:
\begin{align}
L_1 =& \frac{1}{(1+7x)^3}\Bigg( \frac{320013737}{7}x + \frac{29164229489}{7} x^2 + \frac{1226655768017}{7} x^3 + 
 4505536916704 x^4\label{L1expressxy}\\ &+ 79044206825472 x^5 + 999877459130368 x^6 + 
 9391378522824704 x^7 + 66411983644131328 x^8\nonumber\\ &+ 
 354409645379944448 x^9 + 1415208166316048384 x^{10} + 
 4140177110624894976 x^{11}\nonumber\\ &+ 8532124891883765760 x^{12} + 
 11539756946659737600 x^{13} + 8913467434661314560 x^{14}\nonumber\\ &+ 
 2773078757450186752 x^{15} - \frac{320013688}{7}y - \frac{28844055074}{7}xy - 
 171156188528 x^2 y - 4337927987008 x^3 y\nonumber\\ &- 74846829673728 x^4 y - 
 928384597776384 x^5 y - 8516830910414848 x^6 y - 
 58508210959679488 x^7 y\nonumber\\ &- 300982634640572416 x^8 y - 
 1145123381897592832 x^9 y - 3131903931035156480 x^{10} y\nonumber\\ &- 
 5830893280174276608 x^{11} y - 6623201496588615680 x^{12} y - 
 3466348446812733440 x^{13} y\Bigg).\nonumber
\end{align}  The presence of rational coefficients---indeed, the presence of rational coefficients containing 7 in the denominators---demands our attention and rectification.  Given that $x,y,$ and indeed $L_1$ all have integer series expansions in $q$, we must also have an integer $q$-series expansion for the combination of coefficients of $x$, $x^2$, $x^3$, $y$, and $xy$.  Examining the value of the numerators of these coefficients modulo 7, we can conclude that
\begin{align}
r_L := \frac{1}{7}\left( x+3x^2+x^3+6y+5xy \right)\in\mathbb{Z}[[q]]\label{rLisint}
\end{align}  In particular,
\begin{align*}
x+3x^2+x^3+6y+5xy\equiv 0\pmod{7}.
\end{align*}  The resolution of this problem lies in the behavior of the coefficients of $x^m$, $x^m y$ modulo 7.  Taking the presence of $r_L$ into account, there is no other complication for the representation of $L_{\alpha}$ in terms of $x$ and $y$.  The function $1+7x$ can annihilate the poles of $L_{\alpha}$ without interfering with divisibility of $L_{\alpha}$ by powers of 7.  We now have enough information to state our main theorem:

\begin{theorem}\label{Main}
Let $L_{\alpha}$ be as in \eqref{lalphadefn}, and
\begin{align}
\psi :=& \psi(\alpha) = \left\lfloor \frac{7^{\alpha+1}}{16} \right\rfloor.\label{psidefn}
\end{align}  There exists an integer sequence $(k_{\alpha})_{\alpha\ge 1}$ such that, for all $\alpha\ge 1$, 
\begin{align*}
\frac{(1+7x)^{\psi}}{7^{\beta}}\cdot L_{\alpha} + k_{\alpha}\cdot r_L \in\mathbb{Z}[x]\oplus y\mathbb{Z}[x].
\end{align*}
\end{theorem}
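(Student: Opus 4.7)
The plan is to prove Theorem \ref{Main} by induction on $\alpha$, driven by the recursion $L_{\alpha+1} = U^{(\alpha)}(L_\alpha)$ from (\ref{ulagtlap1}). Since $U^{(0)}$ and $U^{(1)}$ alternate with the parity of $\alpha$ and since $\beta = \lfloor \alpha/2 \rfloor$ increases only every other step, I would organize the induction around pairs $(\alpha, \alpha+1)$, so that each ``double step'' gains exactly one factor of $7$ in the denominator $7^{\beta}$.

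For the base, the case $\alpha = 1$ is essentially handed to us by (\ref{L1expressxy}): with $\psi(1) = 3$ and $\beta = 0$, every denominator of $7$ in $(1+7x)^3 L_1$ occurs at one of the monomials $x, x^2, x^3, y, xy$, and by (\ref{rLisint}) these coefficients assemble into an integer multiple of $7 r_L$, so a suitable integer $k_1$ absorbs them. An analogous explicit computation would be carried out for $\alpha = 2$, which serves as a second base case matching the parity alternation.

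For the induction step, I would record the inductive hypothesis as
\[
L_\alpha \;=\; \frac{7^{\lfloor \alpha/2 \rfloor}}{(1+7x)^{\psi(\alpha)}}\bigl(A_\alpha(x) + y\, B_\alpha(x)\bigr) \;-\; \frac{7^{\lfloor \alpha/2 \rfloor} k_\alpha}{(1+7x)^{\psi(\alpha)}}\, r_L,
\]
with $A_\alpha, B_\alpha \in \mathbb{Z}[x]$, and then apply $U^{(\alpha)}$ term by term. The main technical ingredient is a family of structural lemmas describing the images of the basis elements $x^m / (1+7x)^n$, $x^m y / (1+7x)^n$, and of $r_L / (1+7x)^n$ under $U^{(\alpha)}$, as elements of $\mathbb{Z}[x]_{\mathcal{S}} \oplus y\,\mathbb{Z}[x]_{\mathcal{S}}$, with sharp bounds on the exponent of $(1+7x)$ in the denominator and on the $7$-adic valuations of the numerator coefficients. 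The growth rate $\psi(\alpha+1) \approx 7\psi(\alpha)$ and the jumps of $\beta$ must be matched precisely by these bounds so that the images combine into an expression of the required form with some updated integer $k_{\alpha+1}$.

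The hard part will be the genus-$1$ bookkeeping forced by the presence of the $y$-branch of $\mathcal{M}^{0}(\mathrm{X}_0(14))$. Because $U^{(\alpha)}$ generally couples both branches of the direct sum, each basis element must be tracked in both its $x^m$ and $x^m y$ components, and the structural lemmas must predict that the residual denominators of $7$ at each step collapse into exactly an integer multiple of $r_L$ rather than some generic rational combination. Proving this amounts to establishing congruences modulo $7$ between the rows of the matrices describing $U^{(\alpha)}$ on the two branches, ensuring that the ``anomalous'' coefficients producing $r_L$ at $\alpha = 1$ persist in a controlled, algebraically consistent manner at every subsequent step. This is the analogue at genus $1$ of the delicate identities Atkin required for the $\ell = 11$ case of Theorem \ref{thorigconjecR}, and it is where the present argument will depart most substantially from the genus-$0$ localization proofs of \cite{Smoot2, Banerjee, SellersS}.
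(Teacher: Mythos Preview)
Your overall architecture matches the paper's: induction in double steps $L_{2\alpha-1}\to L_{2\alpha}\to L_{2\alpha+1}$, structural lemmas for $U^{(\alpha)}$ on the basis $y^{\beta}x^{m}/(1+7x)^{n}$ (this is the paper's Theorem~\ref{thmrelAA}), and tracking both the $x$- and $y$-branches. The growth $\psi(\alpha+1)\approx 7\psi(\alpha)$ is exactly what the paper checks directly.

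There is, however, a genuine gap in your inductive scheme. The hypothesis you record---that $(1+7x)^{\psi}L_\alpha/7^{\lfloor\alpha/2\rfloor}$ lies in $\mathbb{Z}[x]\oplus y\mathbb{Z}[x]$ up to an integer multiple of $r_L$---is the \emph{conclusion} of Theorem~\ref{Main}, and it is too weak to close the loop. The structural bounds on the $7$-adic valuations of $U^{(1)}(y^{\beta}x^{m}/(1+7x)^{n})$ do \emph{not} by themselves match the required jump in $\beta$: there is a finite but nonempty list of ``deviant'' pairs $(m,r)$ (enumerated in Section~\ref{section1to0part}) where the bound falls short by one power of $7$. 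Their cancellation is not a consequence of the theorem statement at step $\alpha$; it forces a finite system of linear congruences on the first few coefficients $s_{\alpha,\beta}(m)$, and one of these (the one governing $t_0(1)$) must hold modulo $49$, not merely modulo $7$. The paper packages these relations as a \emph{congruence ideal} $I(\alpha)\le(\mathbb{Z}/49\mathbb{Z})[\mathbf{X}]$ and strengthens the inductive hypothesis to $I(\alpha)=(0)$; the decisive step is then the inclusion $I(\alpha+1)\subseteq I(\alpha)$ (Lemma~\ref{stablethmlem}), proved using the periodicity $h^{(\alpha)}_{\beta\gamma}(m,n,r)\equiv h^{(\alpha)}_{\beta\gamma}(m,n+7,r)\pmod{49}$ of Theorem~\ref{hcongred37} together with the fact that the localizing exponent always satisfies $n\equiv 3\pmod 7$. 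Both the $r_L$-collapse you anticipate and the deviant-term cancellation are consequences of $I(\alpha)=(0)$, but neither follows from the bare statement of Theorem~\ref{Main} at the previous step. In short: your plan is right in outline, but you must carry the stronger condition $I(\alpha)=(0)$ through the induction, not merely the theorem itself.
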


This theorem was proved by an extension of previous applications of the localization method.  We were surprised by this success, as well as the peculiar complications which emerge.

In the first place, the functions $x,y$ are indeed modular, and may be expressed in terms of eta quotients over $\Gamma_0(14)$. However, because of the constant terms neither function is itself an eta quotient.

A second complication emerges in establishing the recurrence relations necessary to complete an induction argument through arbitrary powers of $x$ and $1+7x$.  The presence of a second variable complicates the otherwise straightforward process of deriving a modular equation for $1+7x$.  This is resolved by an important theorem on the relationship between any two meromorphic functions on a compact Riemann surface subject to certain conditions on their respective orders.

The third and most interesting complication emerged right away as we examined $L_1$---the presence of negative powers of 7 which must be accounted for.  We could resolve it in the very first case easily enough, but the general case is much more difficult.  Indeed, this is the first indication of the complex interactions between coefficients of $L_{\alpha}$ which manifest in what we refer to as the \textit{congruence ideal} associated with $L_{\alpha}$.  We give precise definitions for this below; suffice for now to say that when we apply our associated $U^{(\alpha)}$ operators, certain anomalous terms fail to gain divisibility by 7

In order for such a sequence to converge 7-adically to 0 per the conditions of the congruence family, these anomalous terms must somehow cancel out, and this ``cancellation" process must be taken carefully into account.

This third complication is all the more important, given that it is the very last step that can be checked, and must be so examined before completion of the proof.  Without this critical argument, the immensity of work done on a proof may be rendered worthless.

What makes the result of this paper so important is that it provides evidence that this cancellation, expressed here in terms of congruence ideals $I(\alpha)$, can be properly understood in the case that $\epsilon_{\infty}\left( \mathrm{X}_0(N) \right)=4$, even when the genus is nonzero.  That is, if one is faced with a conjectured infinite congruence family in which the associated modular curve is classical with cusp count 4, then the localization method appears to always work.

To express our confidence, we state the following conjecture which ought to be amenable to the techniques in this paper, for the 3-elongated plane partition diamond function $d_3(n)$:

\begin{conjecture}\label{conj12}
Let $n,\alpha\in\mathbb{Z}_{\ge 1}$ such that $6n\equiv 1\pmod{7^{\alpha}}$.  Then $d_3(n)\equiv 0\pmod{7^{\left\lfloor\alpha/2\right\rfloor}}$.
\end{conjecture}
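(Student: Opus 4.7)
The plan is to adapt the localization machinery developed for Theorem \ref{Thm12} essentially verbatim, because the generating function $D_3(q)=(q^2;q^2)_{\infty}^3/(q;q)_{\infty}^{10}$ still lives over the modular curve $\mathrm{X}_0(14)$ and the conjectured modulus is the same prime $\ell=7$. The only structural change is that the arithmetic progression is now $6n\equiv 1\pmod{7^{\alpha}}$ rather than $8n\equiv 1\pmod{7^{\alpha}}$, which changes the explicit prefactors but not the overall architecture of the proof.

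First I would construct the analog of the sequence $\mathcal{L}=(L_{\alpha})_{\alpha\ge 1}$: each $L_{\alpha}$ is the generating function $\sum_{6n\equiv 1\bmod{7^{\alpha}}} d_3(n)q^{\lfloor n/7^{\alpha}\rfloor+1}$ multiplied by an appropriate eta-quotient prefactor $\Phi_{\alpha}$ chosen so that $L_{\alpha}$ is modular on $\Gamma_0(14)$ and holomorphic on $\mathbb{H}$. I expect the pair $(x,y)$ from the excerpt, or a close variant thereof, to serve here, since $\mathcal{M}^0(\mathrm{X}_0(14))$ is a feature of the curve, not of $d_3$. I would then derive the pair of operators $U^{(0)},U^{(1)}$ encoding the recursion $L_{\alpha+1}=U^{(\alpha)}(L_{\alpha})$, and compute their action on the basis elements $x^m,\ x^m y$, producing modular equations of the form $U^{(i)}(x^m),\ U^{(i)}(x^m y)\in\mathbb{Z}[x]\oplus y\mathbb{Z}[x]$ in the localization ring.

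Next I would compute $L_1$ explicitly (analogous to (\ref{L1expressxy})) and read off a residue $r_L^{(3)}\in\mathbb{Z}[[q]]$ (the analog of (\ref{rLisint})) which witnesses that a specific $\mathbb{Z}$-linear combination of low-degree monomials in $x,y$ is divisible by $7$. The same calculation should pin down the correct exponent $\psi(\alpha)$ from the $7$-adic growth of the pole order of $L_{\alpha}$ at the cusp $[0]$ (the constant $16$ in Theorem \ref{Main} being replaced by the value dictated by the new prefactors). With $\psi$ determined, one runs the standard induction: assume at level $\alpha$ that $(1+7x)^{\psi(\alpha)}L_{\alpha}/7^{\lfloor\alpha/2\rfloor}+k_{\alpha}\cdot r_L^{(3)}\in\mathbb{Z}[x]\oplus y\mathbb{Z}[x]$, apply $U^{(\alpha)}$, and use the modular equations to obtain the same statement at level $\alpha+1$.

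The hard part, exactly as in the proof of Theorem \ref{Main}, will be the congruence-ideal step: one must show that when $U^{(\alpha)}$ is applied to the representation of $L_{\alpha}$, every anomalous term (i.e., every term whose naive $7$-adic valuation is one less than required) is absorbed either by a compensating multiple of $r_L^{(3)}$ or by the binomial expansion of $(1+7x)^{\psi}$ in the new basis. Because $\mathrm{X}_0(14)$ has genus $1$, the bookkeeping is doubled between the $\mathbb{Z}[x]$ component and the $y\mathbb{Z}[x]$ component, and the Weierstrass-type relation expressing $y^2$ as a cubic in $x$ must be invoked consistently whenever $U^{(\alpha)}$-images cross between components. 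If, as we conjecture, the cusp count $4$ is the dominant obstruction and genus plays only a bookkeeping role, then a finite verification of the congruence ideal $I(\alpha)$ for the first few $\alpha$---together with an eventual periodicity of its generators---should close the induction and yield Conjecture \ref{conj12}.
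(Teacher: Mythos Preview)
The statement you are addressing is Conjecture~\ref{conj12}, which the paper explicitly leaves \emph{unproved}; the authors state it as an open problem and invite the reader to test their method on it. There is therefore no proof in the paper to compare your proposal against.

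As a roadmap your outline is reasonable and matches the authors' intended template: since the modular curve is again $\mathrm{X}_0(14)$, the functions $x,y,z$ and the degree-$14$ modular equation of Theorem~\ref{modeqnz} carry over unchanged, and only the prefactor $\Phi_{\alpha}$, the function $\mathcal{A}$, the auxiliary arrays $h^{(\alpha)}_{\beta\gamma}$, the functions $\pi^{(\alpha)}_{\beta\gamma}$ and $\theta^{(\alpha)}_{\beta}$, and the congruence ideal must be recomputed. But your sketch is not a proof: every substantive step is deferred. You have not computed $L_1$ for $d_3$, have not determined the new $\pi$ and $\theta$ functions, have not verified the $196$ initial relations replacing those of Theorem~\ref{thmrelAA}, and above all have not established the analog of Lemma~\ref{stablethmlem}, the containment $I(\alpha+1)\subseteq I(\alpha)$. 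This last step is the crux of the entire method; it must be checked by explicit computation for the new arrays, and there is no a~priori guarantee that it holds.

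Two technical inaccuracies in your sketch are worth flagging. First, the paper never invokes a Weierstra\ss\ relation expressing $y^2$ as a cubic in $x$; products are handled entirely through the degree-$14$ modular equation for $z$ and the fundamental relations $U^{(\alpha)}(y^{\beta}z^k)$ with $\beta\in\{0,1\}$, so $y^2$ never appears. Second, the induction in the paper closes not via ``eventual periodicity of the generators'' of $I(\alpha)$ but via the single containment $I(\alpha+1)\subseteq I(\alpha)$ together with the base case $I(1)=(0)$; you would need to reproduce exactly that argument, not a periodicity statement.
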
  As in the case of Theorem \ref{Thm12}, the associated modular curve is $\mathrm{X}_0(14)$, with genus 1 and cusp count 4.  The reader is happily invited to test our assertion by applying our techniques to this congruence family.

\subsection{Outline}

The remainder of our article is organized as follows.  

In Section \ref{sectionsetup} we define the key generating functions $L_{\alpha}$ which enumerate the cases of Theorem \ref{Thm12}, and the operators $U^{(\alpha)}$ which allow us to map each $L_{\alpha}$ to $L_{\alpha+1}$.  We also define the modular equations for $z$ and $x$.  In Section \ref{sectionalgebra} we define our localization rings $\mathcal{V}^{(\alpha)}$ of modular functions which contain the even- and odd-indexed functions $L_{\alpha}$.  We use our modular equations to derive general relations which detail how $U^{(\alpha)}$ acts on elements of $\mathcal{V}^{(\alpha)}$.  These relations feature certain auxiliary functions $h^{(\alpha)}_{\beta\gamma}$, for which we prove some useful arithmetic properties.

Sections \ref{section0to1full}-\ref{sectionstable} give us the key steps to the proof of Theorem \ref{Main}.  In Section \ref{section0to1full} we show that $U^{(0)}$ takes elements of $\mathcal{V}^{(0)}$ to $\mathcal{V}^{(1)}$.  We confront the more difficult matter of mapping elements of $\mathcal{V}^{(1)}$ to $\mathcal{V}^{(0)}$ in Section \ref{section1to0part}.  We show that $U^{(1)}$ generally does not send every element from $\mathcal{V}^{(1)}$ to $\mathcal{V}^{(0)}$, and we record the deviant terms, which may be bounded to a finite range of variables.

We show in Section \ref{sectionstable} how the deviant terms may be shown to cancel out through construction and manipulation of the congruence ideals associated with each $L_{\alpha}$, thus finishing the proof of Theorem \ref{Main}.  The implications, questions, and future research directions of our work are discussed in Section \ref{sectionfurther}.

Our proof encompasses a great deal of arithmetic, analytic, and topological considerations, as well as a surprising application of commutative algebra.  It also demands computer algebra.  The environment of function spaces associated with this proof is somewhat hostile.  An enormity of calculations was necessary to complete the proof, and it is unreasonable to expect these to be done by hand.  To this end, we do not place all of our computations in this article.  Instead, we include a well-organized Mathematica supplement, which may be found online at \url{https://drive.google.com/file/d/1_5Tyap1P1SgauxO20lzJhWa2SCKDI_vG/view?usp=sharing}, or through direct contact with either author, and which includes the more tedious calculations.  In addition, we also have a second supplementary notebook, \url{https://www3.risc.jku.at/people/nsmoot/FullInitialCases014.nb}, in which some of the more tedious computations in calculating the initial relations (taking an hour or more) are already completed.

\section{Setup}\label{sectionsetup}

For want of space, we will only give an extremely short discussion of the theory of modular functions and Riemann surfaces.  The reader can consult \cite{Diamond} and \cite{Knopp} for a more comprehensive discussion.

\subsection{A Very Brief Overview of the Theory}
Define the congruence subgroup
\begin{align*}
\Gamma_0(N) = \Bigg\{ \begin{pmatrix}
  a & b \\
  c & d 
 \end{pmatrix}\in \mathrm{SL}(2,\mathbb{Z}) : N|c \Bigg\}\le\mathrm{SL}(2,\mathbb{Z}),
\end{align*} for $N\in\mathbb{Z}_{\ge 1}$. If we define the extended complex upper half plane 
\begin{align*}
\hat{\mathbb{H}} := \mathbb{H}\cup\mathbb{Q}\cup\{\infty\},
\end{align*} then $\Gamma_0(N)$ acts on $\hat{\mathbb{H}}$ such that
\begin{align*}
\left(\begin{pmatrix}
  a & b \\
  c & d 
 \end{pmatrix},\tau\right)&\longrightarrow \frac{a\tau+b}{c\tau+d}.
\end{align*}  We denote this action
\begin{align*}
\gamma\tau := \frac{a\tau+b}{c\tau+d},
\end{align*} for $\tau\in\hat{\mathbb{H}}$, $\gamma=\begin{pmatrix}
  a & b \\
  c & d 
 \end{pmatrix}$.  For a given $\tau$, we define the orbits of this group action as
\begin{align*}
[\tau]_N := \left\{ \gamma\tau: \gamma\in\Gamma_0(N) \right\}.
\end{align*}
\begin{definition}
For $N\in\mathbb{Z}_{\ge 1}$, the classical modular curve of level $N$ is the set of all orbits of $\Gamma_0(N)$ applied to $\hat{\mathbb{H}}$:
\begin{align*}
\mathrm{X}_0(N):=\left\{ [\tau]_N : \tau\in\hat{\mathbb{H}} \right\}.
\end{align*}  For $\tau\in\mathbb{Q}\cup\{\infty\}$, $[\tau]_N$ is a \textit{cusp} of $\mathrm{X}_0(N)$.
\end{definition}  The number of cusps is finite and given by a straightforward formula \cite[Figure 3.3]{Diamond}.  Since we are working almost exclusively with $\mathrm{X}_0(14)$, we will sometimes denote
\begin{align*}
[\tau] := [\tau]_{14}.
\end{align*}  There are 4 inequivalent cusps for this modular curve, which we represent here as $\left\{[\infty], [1/7], [1/2], [0]\right\}.$

The curve $\mathrm{X}_0(N)$ is a compact 1-dimensional complex manifold, i.e., a compact Riemann surface.
\begin{definition}\label{DefnModular}
Let $f:\mathbb{H}\longrightarrow\mathbb{C}$ be holomorphic on $\mathbb{H}$.  Then $f$ is a modular function over $\Gamma_0(N)$ if the following properties are satisfied for every $\gamma=\left(\begin{smallmatrix}
  a & b \\
  c & d 
 \end{smallmatrix}\right)\in\mathrm{SL}(2,\mathbb{Z})$:

\begin{enumerate}
\item If $\gamma\in\Gamma_0(N)$, we have $\displaystyle{f\left( \gamma\tau \right) = f(\tau)}.$
\item We have $$\displaystyle{f\left( \gamma\tau \right) = \sum_{n=n_{\gamma}}^{\infty}\alpha_{\gamma}(n)q^{n\gcd(c^2,N)/ N}},$$  with $n_{\gamma}\in\mathbb{Z}$, and $\alpha_{\gamma}(n_{\gamma})\neq 0$.  If $n_{\gamma}\ge 0$, then $f$ is holomorphic at the cusp $[a/c]_N$.  Otherwise, $f$ has a pole of order $n_{\gamma}$, and principal part
 \begin{align*}
 \sum_{n=n_{\gamma}}^{-1}\alpha_{\gamma}(n)q^{n\gcd(c^2,N)/ N},
 \end{align*} at the cusp $[a/c]_N$.
 \end{enumerate}  We refer to $\mathrm{ord}_{a/c}^{(N)}(f) := n_{\gamma}(f)$ as the order of $f$ at the cusp $[a/c]_N$.
\end{definition}

\begin{definition}
Let $\mathcal{M}\left(\Gamma_0(N)\right)$ be the set of all modular functions over $\Gamma_0(N)$, and $\mathcal{M}^{a/c}\left(\Gamma_0(N)\right)\subset \mathcal{M}\left(\Gamma_0(N)\right)$ to be those modular functions over $\Gamma_0(N)$ with a pole only at the cusp $[a/c]_N$.  These are commutative algebras with scalar multiplication by $\mathbb{C}$, and standard addition and multiplication \cite[Section 2.1]{Radu}.
\end{definition}

As might be expected from the way modular functions are defined, there is a one-to-one correspondence between $\mathcal{M}\left(\Gamma_0(N)\right)$ and the set of meromorphic functions on $\mathrm{X}_0(N)$ which are holomorphic everywhere except for the cusps \cite[Chapter VI, Theorem 4A]{Lehner}.  Moreover, the notions of poles, zeros, and orders match between the different sets.

The correspondence itself is the natural
\begin{align*}
f\longrightarrow\hat{f}:\mathrm{X}_0(N)&\longrightarrow\mathbb{C}\cup\{\infty\}\\
:[\tau]_N&\longrightarrow f(\tau).
\end{align*}  
\begin{theorem}\label{riemannsurfacetheorema}
Let $\mathrm{X}$ be a compact Riemann surface, and let $\hat{f}:\mathrm{X}\longrightarrow\mathbb{C}$ analytic on the entirety of $\mathrm{X}$.  Then $\hat{f}$ is be a constant function.
\end{theorem}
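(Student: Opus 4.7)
The plan is to combine the compactness of $\mathrm{X}$ with the local rigidity of holomorphic functions embodied in the maximum modulus principle. Since $\hat{f}$ is analytic on all of $\mathrm{X}$, in particular it is continuous, and so $|\hat{f}|\colon \mathrm{X}\to\mathbb{R}_{\ge 0}$ is a continuous real-valued function on a compact space. Therefore $|\hat{f}|$ attains its supremum at some point $p_0\in\mathrm{X}$.

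Next I would transfer the problem to a local chart. Choose a coordinate neighborhood $(U,\varphi)$ of $p_0$ with $\varphi(p_0)=0$ and $\varphi(U)$ an open disk in $\mathbb{C}$. The composition $g := \hat{f}\circ\varphi^{-1}$ is then a holomorphic function on $\varphi(U)$ whose modulus attains its maximum at the interior point $0$. The classical maximum modulus principle (for holomorphic functions on a connected open subset of $\mathbb{C}$) forces $g$ to be constant on $\varphi(U)$, and hence $\hat{f}$ is constant on $U$.

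To upgrade from local constancy to constancy on all of $\mathrm{X}$, I would invoke the (standard) connectedness of $\mathrm{X}$. Set $A := \{p\in\mathrm{X} : \hat{f}(p) = \hat{f}(p_0)\}$. Then $A$ is closed by continuity and non-empty since it contains $p_0$. Moreover, for any $p\in A$ the modulus $|\hat f|$ again attains its global maximum $|\hat f(p_0)|$ at $p$, so the local argument above applied at $p$ shows that $\hat{f}$ is constant on a neighborhood of $p$; this proves $A$ is also open. Connectedness then forces $A = \mathrm{X}$.

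There is no real obstacle here: the argument is a direct application of classical one-variable complex analysis, and the only technical point is to verify that the maximum modulus principle is being applied in a genuine holomorphic coordinate chart, which is guaranteed by the definition of a Riemann surface. An alternative, equally short route is to invoke the open mapping theorem: if $\hat{f}$ were non-constant it would be an open map, so $\hat{f}(\mathrm{X})$ would be simultaneously open (by the open mapping theorem) and compact (as the continuous image of $\mathrm{X}$) in $\mathbb{C}$, which is impossible for a non-empty set. Either route delivers the statement immediately.
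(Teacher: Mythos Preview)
Your argument is correct; both the maximum modulus route and the open mapping route are standard and complete proofs of this classical fact. The paper itself does not prove Theorem~\ref{riemannsurfacetheorema} at all: it is simply stated as a known result from the theory of compact Riemann surfaces (the authors explicitly say they give only ``an extremely short discussion of the theory'' and refer to \cite{Diamond}, \cite{Knopp}), so there is nothing in the paper to compare your proof against. Your write-up therefore supplies strictly more than the paper does on this point.
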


\begin{corollary}
For any $N\in\mathbb{Z}_{\ge 1}$, if $f\in\mathcal{M}\left(\Gamma_0(N)\right)$ has nonnegative order at every cusp of $\Gamma_0(N)$, then $f$ is a constant function. 
\end{corollary}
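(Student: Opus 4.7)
The plan is to apply Theorem \ref{riemannsurfacetheorema} directly to the meromorphic function on $\mathrm{X}_0(N)$ associated with $f$ via the correspondence $f \longmapsto \hat{f}$ noted in the excerpt. The only work is to verify that the hypothesis on $f$ translates into $\hat{f}$ being analytic on the entire compact Riemann surface $\mathrm{X}_0(N)$.

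First I would recall that, by Definition \ref{DefnModular}, a modular function $f$ over $\Gamma_0(N)$ is holomorphic on $\mathbb{H}$. Under the correspondence between $\mathcal{M}(\Gamma_0(N))$ and meromorphic functions on $\mathrm{X}_0(N)$ that are holomorphic away from the cusps, the function $\hat{f}$ is therefore holomorphic at every non-cusp point $[\tau]_N$ with $\tau \in \mathbb{H}$. The only remaining points of $\mathrm{X}_0(N)$ are the finitely many cusps $[a/c]_N$ with $a/c \in \mathbb{Q} \cup \{\infty\}$.

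Next I would use the hypothesis: by assumption $\mathrm{ord}_{a/c}^{(N)}(f) = n_\gamma \ge 0$ at every cusp. Referring back to Definition \ref{DefnModular}, this means that the local expansion of $f$ at each cusp contains no principal part, so $\hat{f}$ extends analytically across every cusp. Combined with holomorphy on the non-cusp points, $\hat{f}$ is analytic on the entirety of $\mathrm{X}_0(N)$.

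Finally, since $\mathrm{X}_0(N)$ is a compact Riemann surface (as noted just before Definition \ref{DefnModular}), Theorem \ref{riemannsurfacetheorema} forces $\hat{f}$ to be constant, and hence $f$ is a constant function. I do not anticipate any obstacle here; the corollary is essentially a bookkeeping translation between the two equivalent languages of modular functions over $\Gamma_0(N)$ and meromorphic functions on $\mathrm{X}_0(N)$, followed by a one-line invocation of the theorem.
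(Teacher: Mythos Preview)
Your proposal is correct and matches the paper's approach: the paper states this corollary without proof, treating it as an immediate consequence of Theorem \ref{riemannsurfacetheorema} via the correspondence $f\mapsto\hat f$ just described, which is exactly what you have spelled out.
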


\subsection{Constructing $L_{\alpha}$}

We want to construct the sequence of functions $\mathcal{L}=\left(L_{\alpha}\right)_{\alpha\ge 1}$, each of which has the form (\ref{lalphadefn}), together with a suitable sequence of modified Atkin $U_7$ operators $U^{(\alpha)}$ such that (\ref{ulagtlap1}) is satisfied.  To this end, we construct
\begin{align*}
\mathcal{A} &:= q^6\frac{D_2(q)}{D_2(q^{49})} = q^6\frac{(q^2;q^2)^2_{\infty}(q^{49};q^{49})^{7}_{\infty}}{(q;q)^{7}_{\infty}(q^{98};q^{98})^2_{\infty}}.
\end{align*}  This is a modular function over $\Gamma_0(98)$, which we have checked in the supplementary Mathematica file, mentioned before. 

A useful bookeeping notation for us will be to denote
\begin{align*}
(\alpha)\in\{0,1\}:\ (\alpha)\equiv \alpha\pmod{2},
\end{align*} for any $\alpha\in\mathbb{Z}$.  With this in mind, we define the sequence of linear operators $\left( U^{(\alpha)} \right)_{\alpha\ge 0}$ by
\begin{align*}
U^{(1)}(f) :=& U_7(f),\\
U^{(0)}(f) :=& U_7(\mathcal{A}\cdot f),
\end{align*} where $U_7$ is the following classical operator:
\begin{definition}
Let $f(q) = \sum_{m\ge M}a(m)q^m$.  Then
\begin{align*}
U_{\ell}\left(f(q)\right) := \sum_{\ell m\ge M} a(\ell m)q^m.
\end{align*}
\end{definition}
Many of the key properties of $U_{\ell}$ are listed in \cite[Chapter 8]{Knopp}.  We note in particular that $U_{\ell}$ is linear, and that for any two integer power series $f,g$ in $q=e^{2\pi i\tau}$, we have
\begin{align}
U_{\ell}\left(f(q^{\ell})g(q)\right) = f(q) U_{\ell}\left(g(q)\right).\label{ufgreduceA}
\end{align}
We can now define our main function sequence $\left( L_{\alpha} \right)_{\alpha\ge 0}$  
by
\begin{align*}
L_{2\alpha-1}(\tau) &= \frac{(q^7;q^7)^{7}_{\infty}}{(q^{14};q^{14})^2_{\infty}}\cdot\sum_{n=0}^{\infty} d_2(7^{2\alpha-1}n + \lambda_{2\alpha-1})q^{n+1} = \frac{1}{D_2(q^7)}\cdot\sum_{n=0}^{\infty} d_2(7^{2\alpha-1}n + \lambda_{2\alpha-1})q^{n+1},\\
L_{2\alpha}(\tau) &= \frac{(q;q)^{7}_{\infty}}{(q^2;q^2)^2_{\infty}}\cdot\sum_{n=0}^{\infty} d_2(7^{2\alpha}n + \lambda_{2\alpha})q^{n+1} =\frac{1}{D_2(q)}\cdot\sum_{n=0}^{\infty} d_2(7^{2\alpha}n + \lambda_{2\alpha})q^{n+1}.
\end{align*}  Here, each $\lambda_{\alpha}$ represents the minimal positive inverse of 8 modulo $7^{\alpha}$.  In particular, we compute that
\begin{align*}
\lambda_{2\alpha-1} &:= \frac{1+7^{2\alpha-1}}{8},\\
\lambda_{2\alpha} &:= \frac{1+7^{2\alpha+1}}{8} = \lambda_{2\alpha+1}.
\end{align*}

\begin{lemma}
For all $\alpha\ge 1$, we have
\begin{align*}
L_{\alpha+1} &= U^{(\alpha)}\left( L_{\alpha} \right).
\end{align*}
\end{lemma}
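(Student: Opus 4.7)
The plan is to prove the identity $L_{\alpha+1} = U^{(\alpha)}(L_\alpha)$ by splitting on the parity of $\alpha$ and performing a direct coefficient computation in each case. The essential ingredients are the multiplicative identity (\ref{ufgreduceA}), which lets us pull any factor that is a power series in $q^7$ outside of $U_7$, and the coefficient-extraction description of $U_7$: retain only terms whose $q$-exponent is divisible by $7$. Everything else is arithmetic with the shifts $\lambda_\alpha$.

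First, suppose $\alpha$ is odd, say $\alpha = 2a-1$, so $U^{(\alpha)} = U^{(1)} = U_7$. The prefactor $1/D_2(q^7)$ of $L_{2a-1}$ is a power series in $q^7$, so by (\ref{ufgreduceA})
$$U_7(L_{2a-1}) = \frac{1}{D_2(q)} \cdot U_7\!\left(\sum_{n\ge 0} d_2\bigl(7^{2a-1}n + \lambda_{2a-1}\bigr)\,q^{n+1}\right).$$
The inner $U_7$ retains only terms with $n+1 \equiv 0 \pmod 7$; writing $n+1 = 7(m+1)$ for $m\ge 0$, the exponent becomes $q^{m+1}$ and the argument of $d_2$ rewrites as $7^{2a} m + 6\cdot 7^{2a-1} + \lambda_{2a-1}$. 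Using $\lambda_{2a-1} = (1+7^{2a-1})/8$, one checks that $6\cdot 7^{2a-1} + \lambda_{2a-1} = (49\cdot 7^{2a-1}+1)/8 = \lambda_{2a}$, so the right-hand side matches the defining expression for $L_{2a}$.

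Next, suppose $\alpha$ is even, say $\alpha = 2a$, so $U^{(\alpha)} = U^{(0)}$ acts by multiplying by $\mathcal{A}$ before applying $U_7$. The factor $D_2(q)$ appearing in $\mathcal{A} = q^6 D_2(q)/D_2(q^{49})$ cancels precisely against the $1/D_2(q)$ prefactor of $L_{2a}$, giving
$$\mathcal{A}\cdot L_{2a} = \frac{1}{D_2(q^{49})} \sum_{n\ge 0} d_2\bigl(7^{2a} n + \lambda_{2a}\bigr)\,q^{n+7}.$$
Since $1/D_2(q^{49})$ is a power series in $q^{49} = (q^7)^7$, identity (\ref{ufgreduceA}) pulls it outside $U_7$; the inner $U_7$ then retains only the terms with $n \equiv 0 \pmod 7$, i.e.\ $n = 7k$, producing $q$-exponent $k+1$ and $d_2$-argument $7^{2a+1} k + \lambda_{2a}$. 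The identity $\lambda_{2a} = \lambda_{2a+1}$ already recorded above the lemma now identifies the resulting series as $D_2(q^7)\cdot L_{2a+1}$, so $U^{(0)}(L_{2a}) = L_{2a+1}$.

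No serious obstacle should appear: this is a direct operator manipulation combined with arithmetic on the shifts $\lambda_\alpha$. The only step that might feel non-obvious, the identity $6\cdot 7^{2a-1} + \lambda_{2a-1} = \lambda_{2a}$, is precisely the motivation for defining $\lambda_\alpha$ as the minimal positive inverse of $8$ modulo $7^\alpha$; similarly, the even-case identity $\lambda_{2a} = \lambda_{2a+1}$ is the content of the equations displayed just above the lemma. In effect, the lemma is the rigorous statement that the $\lambda_\alpha$ have been chosen precisely so that the cascade $L_\alpha \mapsto L_{\alpha+1}$ is really effected by the alternating operators $U^{(0)}, U^{(1)}$.
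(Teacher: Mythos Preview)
Your proof is correct and follows essentially the same approach as the paper's: split on the parity of $\alpha$, use (\ref{ufgreduceA}) to extract the $q^7$-power-series prefactor through $U_7$, and verify by direct arithmetic that the resulting shift in the argument of $d_2$ matches the definition of $\lambda_{\alpha+1}$. The only cosmetic difference is that the paper tracks the index shifts line by line, whereas you summarize the key arithmetic identities $6\cdot 7^{2a-1}+\lambda_{2a-1}=\lambda_{2a}$ and $\lambda_{2a}=\lambda_{2a+1}$ explicitly.
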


\begin{proof}

Using the standard properties of $U_{\ell}$ for any prime $\ell$, we have the following for $\alpha\ge 1$:
\begin{align*}
U^{(2\alpha-1)}\left( L_{2\alpha-1} \right) &=U_7\left( L_{2\alpha-1} \right)\\ &= U_7\left( \frac{1}{D_2(q^7)} \sum_{n\ge 0} d_2\left(7^{2\alpha-1}n + \lambda_{2\alpha-1}\right)q^{n+1} \right)\\
&= \frac{1}{D_2(q)}\cdot U_7\left( \sum_{n\ge 1}d_2\left(7^{2\alpha-1}(n-1) + \lambda_{2\alpha-1}\right)q^{n} \right)\\
&= \frac{1}{D_2(q)}\cdot \sum_{7n\ge 1}d_2\left(7^{2\alpha-1}(7n-1) + \lambda_{2\alpha-1}\right)q^{n}\\
&= \frac{1}{D_2(q)}\cdot \sum_{n\ge 1}d_2\left(7^{2\alpha}n-7^{2\alpha-1} + \lambda_{2\alpha-1}\right)q^{n}\\
&= \frac{1}{D_2(q)}\cdot \sum_{n\ge 0}d_2\left(7^{2\alpha}n+7^{2\alpha}-7^{2\alpha-1} + \frac{1+7^{2\alpha-1}}{8} \right)q^{n+1}\\
&= \frac{1}{D_2(q)}\cdot \sum_{n\ge 0}d_2\left(7^{2\alpha}n+\lambda_{2\alpha}\right)q^{n+1}.
\end{align*}
\begin{align*}
U^{(2\alpha)}\left( L_{2\alpha} \right) &=U_7\left( \mathcal{A}\cdot L_{2\alpha} \right)\\ &= U_7\left( q^6\frac{D_2(q)}{D_2(q^{49})}\frac{1}{D_2(q)} \sum_{n\ge 0} d_2\left(7^{2\alpha}n + \lambda_{2\alpha}\right)q^{n+1} \right)\\
&= \frac{1}{D_2(q^7)}\cdot U_7\left( \sum_{n\ge 7}d_2\left(7^{2\alpha}(n-7) + \lambda_{2\alpha}\right)q^{n} \right)\\
&= \frac{1}{D_2(q^7)}\cdot \sum_{7n\ge 7}d_2\left(7^{2\alpha}(7n-7) + \lambda_{2\alpha}\right)q^{n}\\
&= \frac{1}{D_2(q^7)}\cdot \sum_{n\ge 1}d_2\left(7^{2\alpha+1}n-7^{2\alpha+1} + \lambda_{2\alpha}\right)q^{n}\\
&= \frac{1}{D_2(q^7)}\cdot \sum_{n\ge 0}d_2\left(7^{2\alpha+1}(n+1)-7^{2\alpha+1} + \lambda_{2\alpha}\right)q^{n+1}\\
&= \frac{1}{D_2(q^7)}\cdot \sum_{n\ge 0}d_2\left(7^{2\alpha+1}n+\lambda_{2\alpha+1}\right)q^{n+1}.
\end{align*}
\end{proof}

We note also that 
\begin{align*}
U^{(2\alpha)}\left( 1 \right) &=U_7\left( \mathcal{A}\right) = U_7\left( q^6\frac{D_2(q)}{D_2(q^{49})}\right) = \frac{1}{D_2(q^7)}\cdot U_7\left( q^6 D_2(q)\right)\\
&= \frac{1}{D_2(q^7)}\cdot U_7\left( \sum_{n=0}^{\infty}d_k(n)q^{n+6} \right) = \frac{1}{D_2(q^7)}\cdot U_7\left( \sum_{n\ge 6} d_k(n-6)q^{n} \right)\\
&= \frac{1}{D_2(q^7)}\cdot \sum_{7n\ge 6} d_k(7n-6)q^{n} = \frac{1}{D_2(q^7)}\cdot \sum_{n\ge 0}d_2\left(7n+1\right)q^{n+1}\\
U^{(2\alpha)}\left( 1 \right) &= L_1.
\end{align*}  This will be useful in proving (\ref{L1expressxy}).

Now that we have a well-defined function sequence, as well as a means of going from one member of the sequence to its successor, we now need to consider the functions $L_{\alpha}$ in terms of some useful reference functions.  We defined $z, x, y$ in the introduction.  We now build a modular equation for $z$ that will allow us to build recurrence relations to describe how $U^{(\alpha)}$ affects rational polynomials in $x, y$:

\begin{theorem}\label{modeqnz}
For $0\le k\le 14$, let $b_k\in\mathbb{Z}[X]$ be defined as in the Appendix.  We have
\begin{align}
z^{14}+\sum_{k=0}^{13}b_k(z(7\tau))z^k=0.\label{actmodeqnz}
\end{align}
\end{theorem}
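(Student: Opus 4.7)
The plan is to verify \eqref{actmodeqnz} by showing that its left-hand side is a modular function on $\Gamma_0(98)$ whose total pole count can be uniformly bounded, and then by checking that the $q$-expansion at $[\infty]_{98}$ vanishes to at least that order. First I would confirm that both $z(\tau)$ and $z(7\tau)$ belong to $\mathcal{M}(\Gamma_0(98))$: the first is immediate from $z\in\mathcal{M}(\Gamma_0(14))$ together with the inclusion $\Gamma_0(98)\subseteq\Gamma_0(14)$, while for the second, for every $\gamma=\left(\begin{smallmatrix}a&b\\c&d\end{smallmatrix}\right)\in\Gamma_0(98)$ the matrix $\gamma':=\left(\begin{smallmatrix}a&7b\\c/7&d\end{smallmatrix}\right)$ lies in $\Gamma_0(14)$, so $z(7\gamma\tau)=z(\gamma'\cdot 7\tau)=z(7\tau)$.

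Next I would assemble the divisor data for $z(\tau)$ and $z(7\tau)$ at every cusp of $X_0(98)$ using the Ligozat-style order formula for eta quotients. Both functions turn out to have total pole order $14$ on $X_0(98)$; for $z(\tau)$ this matches the double pole at $[0]_{14}$ on $X_0(14)$ multiplied by the degree $7$ of the natural covering $X_0(98)\to X_0(14)$, and heuristically explains the degree $14$ in $z$ on the left-hand side of \eqref{actmodeqnz}: one expects $z(\tau)$ to satisfy a minimal polynomial of degree $14$ over the subfield $\mathbb{C}(z(7\tau))\subseteq\mathcal{M}(X_0(98))$.

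With the cusp orders in hand, I can extract for each $k\in\{0,\ldots,13\}$ a uniform upper bound $d_k$ on $\deg b_k$: at each cusp $[a/c]_{98}$, the summand $b_k(z(7\tau))\,z(\tau)^k$ can contribute at most the pole order of $z(\tau)^{14}$, since otherwise the local expansion cannot cancel against the leading term. Writing $b_k(Y)=\sum_{j=0}^{d_k}c_{k,j}Y^j$ with unknown coefficients, substituting into \eqref{actmodeqnz}, and expanding in $q$ produces a finite linear system in the $c_{k,j}$. Solving this system determines the candidate polynomials $b_k$ recorded in the Appendix, and one checks that they indeed lie in $\mathbb{Z}[X]$.

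Finally, to confirm that the identity holds, let $F(\tau)$ denote the left-hand side of \eqref{actmodeqnz} with the candidate polynomials substituted in. Then $F\in\mathcal{M}(\Gamma_0(98))$, and the divisor analysis yields an explicit bound $N$ on its total pole count. By the corollary to Theorem \ref{riemannsurfacetheorema}, any element of $\mathcal{M}(\Gamma_0(98))$ whose $q$-order at $[\infty]_{98}$ exceeds $N$ must vanish identically, so it suffices to expand $F$ to order $N+1$ in $q$ and check that all coefficients are zero. This is a direct but computationally heavy verification, which is delegated to the supplementary Mathematica notebook. The main obstacle is the sheer bookkeeping: $X_0(98)$ has sixteen inequivalent cusps, and the sharpness of the $d_k$, the correctness of the linear system, and the $q$-precision demanded by the valence argument all hinge on careful tracking of cusp orders.
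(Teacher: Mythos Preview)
Your approach is correct and, like the paper, reduces the identity to a finite $q$-series verification deferred to the Mathematica supplement. The tactical route differs slightly: rather than bounding the total pole order of $F$ across the sixteen cusps of $X_0(98)$ and invoking a valence-type argument, the paper multiplies through by a suitable power of the eta quotient
\[
\mathfrak{m}(\tau) = \frac{1}{q^{4}}\frac{(q^2;q^2)_{\infty}^{5}(q^7;q^7)_{\infty}^{7}}{(q;q)_{\infty}(q^{14};q^{14})_{\infty}^{11}}
\]
so that the resulting function lies in $\mathcal{M}^{\infty}(\Gamma_0(98))$, and then compares principal parts and constants at the single cusp $[\infty]$. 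This buys you a cleaner bookkeeping situation: instead of tracking orders at sixteen cusps, you only need to exhibit one eta quotient that annihilates every pole away from $[\infty]$. Your approach is perhaps more conceptually transparent about why degree $14$ is forced.

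One small caveat: the corollary to Theorem~\ref{riemannsurfacetheorema} as stated in the paper only asserts that a modular function with nonnegative order at \emph{every} cusp is constant. Your argument---``if the zero at $[\infty]$ exceeds the total pole count $N$, then $F=0$''---actually needs the degree formula $\sum_P \mathrm{ord}_P(F)=0$ on a compact Riemann surface, not just Liouville. This is standard, but you should cite it rather than the corollary.
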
  Notice that by (\ref{ufgreduceA}), $U_{7}\left( b_k(z(7\tau)) \right)=b_k(z(\tau))$.

By substituting $z=1+7x$ into (\ref{actmodeqnz}) and simplifying, we immediately have the following:

\begin{corollary}\label{cor1}
For $0\le j\le 13$, let $a_j\in\mathbb{Z}[X]$ be defined as in the Appendix.  We have
\begin{align}
x^{14}+\sum_{j=0}^{13}a_j(x(7\tau))x^j=0.\label{modeqnx}
\end{align}
\end{corollary}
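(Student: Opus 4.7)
The plan is to derive equation \eqref{modeqnx} by direct substitution of $z = 1 + 7x$ (equivalently $z(7\tau) = 1 + 7x(7\tau)$) into the modular equation \eqref{actmodeqnz} supplied by Theorem \ref{modeqnz}, followed by a routine simplification. Substituting yields
$$(1+7x)^{14} + \sum_{k=0}^{13} b_k\bigl(1+7x(7\tau)\bigr)(1+7x)^k = 0.$$

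Next I would expand each factor $(1+7x)^k = \sum_{j=0}^{k}\binom{k}{j}7^j x^j$ via the binomial theorem, and rewrite each $b_k(1+7X) \in \mathbb{Z}[X]$ as a polynomial in $X$ (noting that the substitution $X \mapsto 1+7X$ preserves integer coefficients). Collecting like powers of $x$ then gives an identity $\sum_{j=0}^{14} C_j(x(7\tau))\,x^j = 0$ with $C_j \in \mathbb{Z}[X]$. Since each summand $b_k(1+7x(7\tau))(1+7x)^k$ with $k \le 13$ contributes no $x^{14}$ term, only the leading $(1+7x)^{14}$ supplies the top power, giving the constant $C_{14} = 7^{14}$. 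Dividing the whole identity by $7^{14}$ converts it into the desired monic form $x^{14} + \sum_{j=0}^{13}(C_j/7^{14})(x(7\tau))\,x^j = 0$, and one takes $a_j := C_j/7^{14}$.

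The main obstacle is verifying that each $a_j$ actually lies in $\mathbb{Z}[X]$, as opposed to merely $\mathbb{Q}[X]$, and that the resulting polynomials agree with those tabulated in the Appendix. This integrality reflects a $7$-adic cancellation between the binomial contributions $\binom{k}{j}7^j$ from the expansion of $(1+7x)^k$ and the specific arithmetic structure of the $b_k$ polynomials — in particular, the $b_k$ are constructed so that $b_k(1+7X)$ already carries substantial $7$-divisibility in its coefficients, compensating for the shortfall in the binomial contributions at low $j$. Since every polynomial appearing in the argument has bounded degree and explicit rational coefficients, the whole verification reduces to a finite symbolic computation, ideally carried out with a computer algebra system and matched term-by-term with the $a_j$ listed in the Appendix.
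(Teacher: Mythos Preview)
Your proposal is correct and matches the paper's approach: the paper derives the corollary simply by substituting $z = 1+7x$ into \eqref{actmodeqnz} and simplifying, which is exactly what you outline. Your identification of the leading coefficient as $7^{14}$ and the need to verify integrality of the resulting $a_j$ (a finite symbolic check against the Appendix) is the right way to flesh out what the paper leaves implicit.
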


We prove Theorem \ref{modeqnz} in our Mathematica supplement.

\section{Algebra Structure}\label{sectionalgebra}

\subsection{Function Spaces}

We may express the space of modular functions over $\mathrm{X}_0(14)$ with a pole only at the cusp $[0]$ as $\mathbb{C}[x]\oplus y\mathbb{C}[x]$.  We have seen from the form of $L_1$ that we need to work over a broader space.  To do this, we define the following two sets (remembering that $(\alpha)\in\{0,1\}$):  
\begin{align*}
\mathcal{V}^{(\alpha)}_{n} := \left\{ \frac{1}{(1+7x)^n}\sum_{\substack{0\le\beta\le 1\\ m\ge 1-\beta}} s_{\beta}(m)\cdot 7^{\theta^{(\alpha)}_{\beta}(m)}\cdot y^{\beta}x^m: \text{ $s_{\beta}$ is $\mathbb{Z}$-valued and has finite support} \right\}.
\end{align*}  We have a total of four functions $\theta^{(\alpha)}_{\beta}$ which we define in the Appendix.  At times we will also refer to the more general spaces
\begin{align*}
\mathcal{V}^{(\alpha)} := \bigcup_{n\ge 1} \mathcal{V}^{(\alpha)}_{n}.
\end{align*}

Having defined both the operators $U^{(\alpha)}$ which take $L_{\alpha}$ to $L_{\alpha+1}$, as well as the parent spaces $\mathcal{V}^{(\alpha)}$ of $L_{\alpha}$, we now need to study how the former acts on the latter.  We have the following general relation:

\begin{theorem}\label{thmrelAA}

Let $\beta\in\{0,1\}$, $m,n\in\mathbb{Z}$ such that $n\ge 1$, $m\ge 1-\beta$.  Then there exists arrays $h^{(\alpha)}_{\beta\gamma}(m,n,r)\in \mathbb{Z}$ which have finite support in $r$, such that
\begin{align*}
U^{(\alpha)}\left( \frac{y^{\beta}x^{m}}{(1+7x)^{n}} \right) =& \frac{1}{(1+7x)^{7n+\kappa}}\sum_{\substack{0\le\gamma\le 1\\ r\ge 1-\gamma}} h^{(\alpha)}_{\beta\gamma}(m,n,r) 7^{\pi^{(\alpha)}_{\beta\gamma}(m,r)}y^{\gamma}x^r.
\end{align*}  In particular, the functions $\pi^{(\alpha)}_{\beta\gamma}$ are defined in the Appendix, and
\begin{align*}
\kappa = \begin{cases}
0, & (\alpha)=1,\\
3, & (\alpha)=0.
\end{cases}
\end{align*}
\end{theorem}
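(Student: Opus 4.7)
The proof rests on the modular equation for $x$ given in Corollary \ref{cor1} and the fundamental property $U_7(f(q^7)g(q))=f(q)U_7(g(q))$ from (\ref{ufgreduceA}). My plan is to reduce the identity to a finite number of directly computable base cases, and then to extend to arbitrary $m,n$ by induction together with reduction modulo the modular equation.

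First, I would clear the denominator $(1+7x(\tau))^n$ by multiplying by a suitable power of $(1+7x(7\tau))$. Using (\ref{modeqnx}), a cusp-by-cusp order count on $\mathrm{X}_0(14)$---using the known orders of $x$ and $y$ at the four cusps $[\infty]$, $[1/7]$, $[1/2]$, $[0]$---should establish that the product $(1+7x(7\tau))^n\cdot y^{\beta}x(\tau)^{m}/(1+7x(\tau))^{n}$ is an element of the algebra generated over $\mathbb{Z}$ by $x(\tau),y(\tau),x(7\tau),y(7\tau)$ that is polynomial of bounded degree in $x(\tau)$ and at most linear in $y(\tau)$. Applying $U_7$ to this cleared expression pulls the $(7\tau)$-factors outside, where they revert to $(\tau)$-variables; the denominator $(1+7x(\tau))^{7n}$ then arises from the relation imposed by the modular equation between $(1+7x(7\tau))^n$ and powers of $(1+7x(\tau))$. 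In the $(\alpha)=0$ case, multiplication by $\mathcal{A}$ contributes three additional poles at $[0]$, giving the $\kappa=3$ correction.

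Second, I would compute the finite set of base values $U^{(\alpha)}(y^{\gamma}x^{r})$ for $\gamma\in\{0,1\}$ and $r$ in the bounded range dictated by the degree of the modular equation. These atomic computations are executed symbolically in the Mathematica supplement, and they fix the initial data for the arrays $h^{(\alpha)}_{\beta\gamma}(m,n,r)$ while confirming the base-case form of the exponents $\pi^{(\alpha)}_{\beta\gamma}$ listed in the Appendix. Extension to general $m$ then follows by iteratively invoking (\ref{modeqnx}) to reduce $x^m$ for $m\ge 14$ to $\mathbb{Z}[x(7\tau)]$-linear combinations of $1,x,\ldots,x^{13}$. Extension to general $n$ follows by induction: assuming the claim for $n-1$, multiplication by $(1+7x(\tau))^{-1}$ and reapplication of the clearing step yields the claim for $n$.

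The main obstacle is the precise tracking of the 7-adic valuations $\pi^{(\alpha)}_{\beta\gamma}(m,r)$ stated in the Appendix. The coefficients $a_j\in\mathbb{Z}[X]$ of the modular equation carry varying 7-adic valuations, and each iterative reduction step interacts with these coefficients in a nontrivial way. Moreover, because $y$ is an independent basis element of $\mathcal{M}^{0}(\mathrm{X}_0(14))$ rather than a polynomial in $x$, the bookkeeping for the coupled $(y^0,y^1)$-components is considerably more delicate than in the genus-0 cases that have been handled before. The induction must therefore be calibrated so that the exact form of $\pi^{(\alpha)}_{\beta\gamma}$ is preserved through every step; the subtle anomalous behavior foreshadowed by the auxiliary term $r_L$ from (\ref{rLisint}) suggests that some of the needed cancellations operate only modulo $7$, requiring careful case analysis in the lowest-valuation terms.
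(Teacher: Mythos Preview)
Your high-level outline---finite base cases plus induction via the modular equation---matches the paper's strategy, but two concrete pieces of the mechanism are missing or misstated.

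First, the $n$-induction cannot be done one step at a time. Writing ``assuming the claim for $n-1$, multiplication by $(1+7x(\tau))^{-1}$\ldots yields the claim for $n$'' does not work: $(1+7x(\tau))^{-1}$ sits inside the $U_7$ and cannot be pulled out. What the paper actually uses is the modular equation for $z=1+7x$ (Theorem~\ref{modeqnz}), not merely Corollary~\ref{cor1}: from $\sum_{k=0}^{14}b_k(z(7\tau))z(\tau)^k=0$ with $b_0(X)=X^{14}$ one derives
\[
\frac{1}{z(\tau)^{n}}=\frac{-1}{z(7\tau)^{14}}\sum_{k=1}^{14}b_k(z(7\tau))\,\frac{1}{z(\tau)^{n-k}},
\]
and after applying $U^{(\alpha)}$ the factor $z(7\tau)^{-14}$ emerges as $(1+7x(\tau))^{-14}$. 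This is a fourteen-step recurrence in $n$, intertwined with the fourteen-step recurrence in $m$ from Corollary~\ref{cor1}; both are needed, and both must be run simultaneously (Lemma~\ref{lowerboundpiabc}).

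Second, and more seriously, the cusp-order bookkeeping you propose controls pole orders but carries no $7$-adic information, so it cannot by itself propagate the exponent functions $\pi^{(\alpha)}_{\beta\gamma}$. The paper's decisive arithmetic input is the explicit verification that the combined recurrence weights
\[
w(j,k):=a_j(x(\tau))\,b_k(z(\tau))\,(1+7x)^{7(k-2)}=\sum_{l} v(j,k,l)\,7^{\lfloor (7l+j-6)/9\rfloor}x^l
\]
carry exactly the $7$-adic valuation needed so that the floor-function shape $\lfloor(7r-m+\epsilon)/9\rfloor$ of $\hat{\pi}^{(\alpha)}_{\beta\gamma}$ is preserved under the double recurrence. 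This is checked computationally and is the heart of Lemma~\ref{lowerboundpiabc}; your proposal acknowledges the obstacle but gives no mechanism for it. The paper then handles the exceptional small-$m$ cases (where $\pi$ differs from $\hat{\pi}$) separately via Lemma~\ref{lowerboundpiabc0}, a recurrence in $n$ alone with a sharper valuation estimate on $\hat{w}(k):=-b_k(z)(1+7x)^{7(k-2)}$.
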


We show that this general relation may be proved by explicit computation for $1\le m\le 14$, $1\le n\le 14$, beyond which the relation may be proved via recurrence properties incurred by our modular equations.

\begin{lemma}\label{lowerboundpiabc}
Suppose that we define 
\begin{align}
\hat{\pi}^{(\alpha)}_{\beta\gamma}(m,r) := \left\lfloor \frac{7r-m+\epsilon^{(\alpha)}_{\beta\gamma}}{9} \right\rfloor\label{pihatvalues}
\end{align} for some $\epsilon^{(\alpha)}_{\beta\gamma}\in\mathbb{Z}$.  If the relation
\begin{align}
U^{(\alpha)}\left( \frac{y^{\beta}x^{m}}{(1+7x)^{n}} \right) =& \frac{1}{(1+7x)^{7n+\kappa}}\sum_{\substack{0\le\gamma\le 1\\ r\ge 1-\gamma}} h^{(\alpha)}_{\beta\gamma}(m,n,r) 7^{\hat{\pi}^{(\alpha)}_{\beta\gamma}(m,r)}y^{\gamma}x^r\label{lowerbounduaymoz}
\end{align} holds for $1\le m\le 14$, $1\le n\le 14$, then it must also hold for all $m,n\ge 15$.
\end{lemma}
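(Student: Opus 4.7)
The plan is to extend (\ref{lowerbounduaymoz}) from the base case $1 \leq m, n \leq 14$ to all $m, n \geq 1$ by a double induction, using the modular equations (\ref{modeqnx}) for $x$ and (\ref{actmodeqnz}) for $z = 1 + 7x$, together with the pull-out identity (\ref{ufgreduceA}).

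First I would handle the case $m \geq 15$, with $n$ still in the base range. Multiplying (\ref{modeqnx}) by $y^{\beta} x^{m-14}/(1+7x)^n$ gives
\begin{align*}
\frac{y^{\beta} x^{m}}{(1+7x)^{n}} = -\sum_{j=0}^{13} a_j(x(7\tau)) \cdot \frac{y^{\beta} x^{m-14+j}}{(1+7x)^{n}}.
\end{align*}
Since each $a_j(x(7\tau))$ is a polynomial in a function of $q^{7}$, the identity (\ref{ufgreduceA}) lets us pull these factors outside $U^{(\alpha)}$:
\begin{align*}
U^{(\alpha)}\left(\frac{y^{\beta} x^{m}}{(1+7x)^{n}}\right) = -\sum_{j=0}^{13} a_j(x(\tau)) \cdot U^{(\alpha)}\left(\frac{y^{\beta} x^{m-14+j}}{(1+7x)^{n}}\right).
\end{align*}
Every exponent $m - 14 + j \leq m - 1$ strictly drops, so by induction each term on the right already satisfies (\ref{lowerbounduaymoz}). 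Expanding $a_j(x(\tau)) = \sum_i a_{j,i} x^i$ and regrouping terms by $y^{\gamma} x^{r}$ produces the claimed representation, with the new integer arrays $h^{(\alpha)}_{\beta\gamma}(m,n,r)$ obtained by a finite convolution.

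Next, one handles the case $n \geq 15$ analogously via Theorem \ref{modeqnz}. Writing the modular equation as $z^{14} = -\sum_{k=0}^{13} b_k(z(7\tau)) z^{k}$ and dividing by $z^{n}$ yields, after isolating the $k=0$ term,
\begin{align*}
\frac{b_0(z(7\tau))}{(1+7x)^{n}} = -\frac{1}{(1+7x)^{n-14}} - \sum_{k=1}^{13} b_k(z(7\tau)) \cdot \frac{1}{(1+7x)^{n-k}},
\end{align*}
valid for $n \geq 15$. After inverting $b_0(z(7\tau))$ (which, up to a power of $z(7\tau)$ that is harmlessly absorbed into the output denominator, is a unit in the relevant localization), this expresses $1/(1+7x)^{n}$ as a $\mathbb{Z}[z(7\tau)]$-linear combination of $1/(1+7x)^{n'}$ for $n - 14 \leq n' \leq n - 1$. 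Applying $U^{(\alpha)}$ together with (\ref{ufgreduceA}) again reduces to strictly smaller $n$, and the first induction covers any $m \geq 15$ that appears along the way.

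The main obstacle is confirming that the 7-adic lower bound $\hat{\pi}^{(\alpha)}_{\beta\gamma}$ is preserved by each reduction. In the $m$-induction, the coefficient of $y^{\gamma} x^{r}$ picks up contributions of the form $a_{j,i} \cdot h^{(\alpha)}_{\beta\gamma}(m - 14 + j, n, r - i) \cdot 7^{\hat{\pi}^{(\alpha)}_{\beta\gamma}(m - 14 + j, r - i)}$, and one needs
\begin{align*}
v_7(a_{j,i}) + \left\lfloor \frac{7(r - i) - (m - 14 + j) + \epsilon^{(\alpha)}_{\beta\gamma}}{9} \right\rfloor \geq \left\lfloor \frac{7r - m + \epsilon^{(\alpha)}_{\beta\gamma}}{9} \right\rfloor
\end{align*}
for every $(i, j)$ in the support of $a_j$, with an analogous estimate required for the $b_{k,i}$ in the $n$-induction. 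Both inequalities reduce to a uniform bound of the shape $v_7(a_{j,i}) \geq (7i + j - 14)/9$, modulo boundary corrections; this is precisely the arithmetic constraint that dictates the slope $7/9$ in (\ref{pihatvalues}) and fixes the parameters $\epsilon^{(\alpha)}_{\beta\gamma}$ recorded in the Appendix. Once these finite inequalities are verified against the explicit coefficient tables of $a_j$ and $b_k$, the induction closes and the lemma follows.
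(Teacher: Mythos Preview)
Your approach is essentially the paper's: reduce via the modular equations for $x$ and $z$, pull coefficients in $q^7$ through $U_7$ by (\ref{ufgreduceA}), and close the induction by a finite $7$-adic check on the coefficient tables. The paper collapses your two inductions into one, defining $w(j,k):=a_j(x(\tau))\,b_k(z(\tau))\,(1+7x)^{7(k-2)}$ and verifying computationally that $w(j,k)=\sum_l v(j,k,l)\,7^{\lfloor(7l+j-6)/9\rfloor}x^l$; the floor inequality $\lfloor a/9\rfloor+\lfloor b/9\rfloor\ge\lfloor(a+b-8)/9\rfloor$ then gives exactly the bound you wrote down. One point you gloss over that the paper makes explicit: after ``inverting $b_0$'' and homogenizing denominators, the surviving factor $-b_k(1+7x)\,(1+7x)^{7(k-2)}$ must be a \emph{polynomial} in $x$, not merely a Laurent polynomial, and this only works because $b_0(z)=z^{14}$ and $b_1(z)$ has minimum $z$-degree $7$. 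Once that is noted, your sequential induction and the paper's simultaneous one are interchangeable.
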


\begin{proof}

We begin by taking (\ref{modeqnz}):
\begin{align*}
z^{14}+\sum_{k=0}^{13}b_k(z(7\tau))z^k= \sum_{k=1}^{14}b_k(z(7\tau))z^k + b_0(z(7\tau)).
\end{align*}  Rearranging and dividing through by $b_0(z(7\tau))$, we have
\begin{align*}
1 =& \frac{-1}{b_0(z(7\tau))}\sum_{k=1}^{14}b_k(z(7\tau))\cdot z^{k}.
\end{align*}  We now divide both sides by $z^n$:
\begin{align*}
\frac{1}{z^{n}} =& \frac{-1}{z(7\tau)^{14}}\sum_{k=1}^{14}b_k(z(7\tau))\cdot \frac{1}{z^{n-k}}.
\end{align*}  We now multiply by a given power of $x$ and $y$:
\begin{align*}
\frac{y^{\beta}x^m}{z^{n}} =& \frac{-1}{z(7\tau)^{14}}\sum_{k=1}^{14}b_k(z(7\tau))\frac{y^{\beta}x^m}{z^{n-k}}.
\end{align*}  Applying $U^{(\alpha)}$ to both sides and and substituting $z=1+7x$, we have
\begin{align}
U^{(\alpha)}\left(\frac{y^{\beta}x^m}{(1+7x)^{n}}\right) =& \frac{-1}{(1+7x)^{14}}\sum_{k=1}^{14}b_k(z(\tau)) U^{(\alpha)}\left(\frac{y^{\beta}x^m}{(1+7x)^{n-k}}\right).\label{ualphaxmznmk}
\end{align}  Next, we recall (\ref{modeqnx}):
\begin{align*}
x^{14}+\sum_{j=0}^{13}a_j(x(7\tau))x^j=0.
\end{align*} 
If we multiply both sides by $\frac{y^{\beta}x^{m-14}}{z^{n-k}}$ for some $m\ge 14$, we have
\begin{align*}
\frac{y^{\beta}x^{m}}{z^{n-k}}+\sum_{j=0}^{13}a_j(x(7\tau))\frac{y^{\beta}x^{m+j-14}}{z^{n-k}}=0.
\end{align*}  Applying $U^{(\alpha)}$, rearranging, and again substituting $z=1+7x$, we have
\begin{align*}
U^{(\alpha)}\left(\frac{y^{\beta}x^{m}}{(1+7x)^{n-k}}\right) = -\sum_{j=0}^{13}a_j(x(\tau)) U^{(\alpha)}\left(\frac{y^{\beta}x^{m+j-14}}{(1+7x)^{n-k}}\right).
\end{align*}  Substituting this into (\ref{ualphaxmznmk}), we now also take $n\ge 14$.  We show that if (\ref{lowerbounduaymoz}) applies to $U^{(\alpha)}\left( \frac{y^{\beta}x^{m+j-14}}{(1+7x)^{n-k}} \right)$ for $0\le j\le 13$, $1\le k\le 14$, then the relation must apply to $U^{(\alpha)}\left( \frac{y^{\beta}x^{m}}{(1+7x)^{n}} \right)$.  Thus, if we verify the relation for the first 14 consecutive integral values of $m,n$, then the relation will apply for all higher $m,n$.  To this end, we have
\begin{align*}
U^{(\alpha)}&\left( \frac{y^{\beta}x^{m}}{(1+7x)^{n}} \right) = \frac{-1}{(1+7x)^{14}}\sum_{k=1}^{14}b_k(z(\tau))\cdot U^{(\alpha)}\left( \frac{y^{\beta}x^{m}}{(1+7x)^{n-k}} \right)\\
=& \frac{1}{(1+7x)^{14}}\sum_{j=0}^{13}\sum_{k=1}^{14}a_j(\tau)b_k(z(\tau))\cdot U^{(\alpha)}\left( \frac{y^{\beta}x^{m+j-14}}{(1+7x)^{n-k}} \right)\\
=& \frac{1}{(1+7x)^{14}}\sum_{j=0}^{13}\sum_{k=1}^{14}a_j(\tau)b_k(z(\tau))\cdot\frac{1}{(1+7x)^{7(n-k)+\kappa}}\sum_{\substack{0\le\gamma\le 1,\\ r\ge 1-\gamma}} h^{(\alpha)}_{\beta\gamma}(m+j-14,n-k,r) 7^{\pi^{(\alpha)}_{\beta\gamma}(m+j-14,r)}y^{\gamma}x^r\\
=& \frac{1}{(1+7x)^{7n+\kappa}}\sum_{j=0}^{13}\sum_{k=1}^{14}w(j,k) \sum_{\substack{0\le\gamma\le 1,\\ r\ge 1-\gamma}} h^{(\alpha)}_{\beta\gamma}(m+j-14,n-k,r) 7^{\pi^{(\alpha)}_{\beta\gamma}(m+j-14,r)}y^{\gamma}x^r,
\end{align*} where we define
\begin{align*}
w(j,k) :=& a_j(\tau)b_k(z(\tau))(1+7x)^{7(k-2)}\\
=& \sum_{l=1}^{L} v(j,k,l)\cdot 7^{\left\lfloor \frac{7l+j-6}{9} \right\rfloor}\cdot x^l.
\end{align*}  We verify that $w(j,k)$ has this form in our Mathematica supplement.  This gives us
\begin{align*}
U^{(\alpha)}&\left( \frac{y^{\beta}x^{m}}{(1+7x)^{n}} \right)\\
=& \frac{1}{(1+7x)^{7n+\kappa}}\sum_{\substack{0\le j\le 13,\\ 1\le k\le 14\\ 0\le l\le L\\0\le\gamma\le 1\\ r\ge 1-\gamma}}v(j,k,l) h^{(\alpha)}_{\beta\gamma}(m+j-14,n-k,r) 7^{\pi^{(\alpha)}_{\beta\gamma}(m+j-14,r)+\left\lfloor \frac{7l+j-6}{9} \right\rfloor}y^{\gamma}x^{r+l}\\
=& \frac{1}{(1+7x)^{7n+\kappa}}\sum_{\substack{0\le j\le 13,\\ 1\le k\le 14\\ 0\le l\le L\\0\le\gamma\le 1\\ r\ge 1+l-\gamma}}v(j,k,l) h^{(\alpha)}_{\beta\gamma}(m+j-14,n-k,r-l) 7^{\pi^{(\alpha)}_{\beta\gamma}(m+j-14,r-l)+\left\lfloor \frac{7l+j-6}{9} \right\rfloor}y^{\gamma}x^{r},
\end{align*} with the latter line coming from adjusting $r$.  Examining the powers of 7, we note that 
\begin{align*}
\pi^{(\alpha)}_{\beta\gamma}(m+j-14,r)+\left\lfloor \frac{7l+j-6}{9} \right\rfloor &= \left\lfloor \frac{7r-(m+j-14)+\epsilon^{(\alpha)}_{\beta\gamma}}{9} \right\rfloor +\left\lfloor \frac{7l+j-6}{9} \right\rfloor\\
&\ge \left\lfloor \frac{7(r+l)-m+\epsilon^{(\alpha)}_{\beta\gamma}}{9} \right\rfloor\\
&= \pi^{(\alpha)}_{\beta\gamma}(m,r+l).
\end{align*}  That is, for $r\ge l$,
\begin{align*}
\pi^{(\alpha)}_{\beta\gamma}(m+j-14,r-l)+\left\lfloor \frac{7l+j-6}{9} \right\rfloor &\ge \pi^{(\alpha)}_{\beta\gamma}(m,r).
\end{align*}  Therefore, the power of 7 necessary for (\ref{lowerbounduaymoz}) is achieved in the coefficient of $y^{\gamma}x^r$.  We now note that we can define
\begin{align*}
h^{(\alpha)}_{\beta\gamma}(m,n,r):=\sum_{\substack{0\le j\le 13,\\ 1\le k\le 14\\ 0\le l\le L}}v(j,k,l) h^{(\alpha)}_{\beta\gamma}(m+j-14,n-k,r-l) 7^{\pi^{(\alpha)}_{\beta\gamma}(m+j-14,r-l)+\left\lfloor \frac{7l+j-6}{9} \right\rfloor - \pi^{(\alpha)}_{\beta\gamma}(m,r)},
\end{align*} since the latter also has finite support in $r$.
\end{proof}

\begin{lemma}\label{lowerboundpiabc0}
Let $\pi^{(\alpha)}_{\beta\gamma}$ be defined as in the Appendix.  If the relation
\begin{align}
U^{(\alpha)}\left( \frac{y^{\beta}x^{m}}{(1+7x)^{n}} \right) =& \frac{1}{(1+7x)^{7n+\kappa}}\sum_{\substack{0\le\gamma\le 1\\ r\ge 1-\gamma}} h^{(\alpha)}_{\beta\gamma}(m,n,r) 7^{\pi^{(\alpha)}_{\beta\gamma}(m,r)}y^{\gamma}x^r\label{adsfadaiie33}
\end{align} holds for a fixed $m$ and $1\le n\le 14$, then it must hold for all $n\ge 15$.
\end{lemma}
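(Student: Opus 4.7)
The plan is to argue by strong induction on $n$, invoking only the modular equation (\ref{actmodeqnz}) for $z = 1+7x$; since $m$ remains fixed throughout, the modular equation (\ref{modeqnx}) for $x$ is unnecessary, and the proof is a strict specialization of that of Lemma \ref{lowerboundpiabc}. The base case $1 \le n \le 14$ is the hypothesis of the lemma.

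For the inductive step, fix $n \ge 15$ and suppose the conclusion holds for all $n' \in \{1, \ldots, n-1\}$. Rearranging (\ref{actmodeqnz}), dividing through by $z^n$, and multiplying by $y^\beta x^m$ yields
\begin{align*}
\frac{y^\beta x^m}{z^n} \;=\; \frac{-1}{z(7\tau)^{14}} \sum_{k=1}^{14} b_k(z(7\tau))\,\frac{y^\beta x^m}{z^{n-k}}.
\end{align*}
Applying $U^{(\alpha)}$ and invoking (\ref{ufgreduceA}) to pull each $b_k(z(7\tau))$ through $U_7$ produces
\begin{align*}
U^{(\alpha)}\!\left(\frac{y^\beta x^m}{(1+7x)^n}\right) \;=\; \frac{-1}{(1+7x)^{14}} \sum_{k=1}^{14} b_k(z(\tau))\, U^{(\alpha)}\!\left(\frac{y^\beta x^m}{(1+7x)^{n-k}}\right).
\end{align*}
Since $1 \le n - k \le n - 1$ for each $1 \le k \le 14$, the inductive hypothesis applies to every inner $U^{(\alpha)}$-term.

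Substituting the inductive expansion for each inner term, I would absorb the factor $b_k(z(\tau))(1+7x)^{7(k-2)}$ --- needed to normalize the common denominator to $(1+7x)^{7n+\kappa}$ --- as a polynomial $\sum_{l} v(k,l)\, 7^{\lfloor (7l-6)/9 \rfloor}x^l \in \mathbb{Z}[x]$. This is the $j = 0$ slice of the $w(j,k)$ expansion used in Lemma \ref{lowerboundpiabc}, verified computationally in the Mathematica supplement (in particular, the divisibility of $b_1(z(\tau))$ by $(1+7x)^7$, needed since $7(k-2) < 0$ at $k=1$, is already included there). Reindexing $r \mapsto r-l$ and collecting terms, I then define
\begin{align*}
h^{(\alpha)}_{\beta\gamma}(m,n,r) \;:=\; -\sum_{k=1}^{14}\sum_{l} v(k,l)\, h^{(\alpha)}_{\beta\gamma}(m, n-k, r-l)\, 7^{\,\pi^{(\alpha)}_{\beta\gamma}(m,r-l) + \lfloor (7l-6)/9 \rfloor - \pi^{(\alpha)}_{\beta\gamma}(m,r)},
\end{align*}
which has finite support in $r$ inherited from each summand, and recovers the desired form (\ref{adsfadaiie33}).

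The main obstacle --- and essentially the only nontrivial step --- is ensuring that the exponent of $7$ in the summand above is always nonnegative, i.e., that
\begin{align*}
\pi^{(\alpha)}_{\beta\gamma}(m, r - l) + \left\lfloor \frac{7l - 6}{9} \right\rfloor \;\ge\; \pi^{(\alpha)}_{\beta\gamma}(m, r) \qquad (r \ge l).
\end{align*}
This is precisely the $j = 0$ instance, with $m$ unshifted, of the floor inequality established within the proof of Lemma \ref{lowerboundpiabc}; because $m$ does not vary here, the bound is in fact cleaner, and follows by the same elementary case analysis modulo $9$ applied to the identity $\pi^{(\alpha)}_{\beta\gamma}(m,r) = \lfloor (7r - m + \epsilon^{(\alpha)}_{\beta\gamma})/9 \rfloor$. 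Once this inequality is granted, the array $h^{(\alpha)}_{\beta\gamma}(m,n,r)$ is integer-valued with finite support, and the induction closes.
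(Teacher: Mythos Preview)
Your overall strategy---strong induction on $n$ using only the modular equation for $z$---is exactly what the paper does, and the manipulation down to
\[
U^{(\alpha)}\!\left(\frac{y^\beta x^m}{(1+7x)^n}\right) \;=\; \frac{-1}{(1+7x)^{14}} \sum_{k=1}^{14} b_k(z(\tau))\, U^{(\alpha)}\!\left(\frac{y^\beta x^m}{(1+7x)^{n-k}}\right)
\]
is correct. The gap is in the $7$-adic bookkeeping that follows.

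Your identification of $-b_k(z)(1+7x)^{7(k-2)}$ with the ``$j=0$ slice of $w(j,k)$'' is mistaken: $w(j,k)=a_j(x)\,b_k(z)(1+7x)^{7(k-2)}$ carries the extra factor $a_j$, and more importantly, the inequality in Lemma~\ref{lowerboundpiabc} is
\[
\pi^{(\alpha)}_{\beta\gamma}(m+j-14,\,r-l)+\left\lfloor\tfrac{7l+j-6}{9}\right\rfloor \;\ge\; \pi^{(\alpha)}_{\beta\gamma}(m,r),
\]
where at $j=0$ the shift $m\mapsto m-14$ contributes $+14$ inside the first floor. That $+14$ is what makes the inequality close. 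Your ``$m$ unshifted'' version,
\[
\pi^{(\alpha)}_{\beta\gamma}(m,\,r-l)+\left\lfloor\tfrac{7l-6}{9}\right\rfloor \;\ge\; \pi^{(\alpha)}_{\beta\gamma}(m,r),
\]
is simply false: at $l=0$ it reads $\pi-1\ge\pi$, and at $l=1$ it asks for $\pi^{(\alpha)}_{\beta\gamma}(m,r-1)\ge\pi^{(\alpha)}_{\beta\gamma}(m,r)$, which fails for generic $r$ since $\pi$ is increasing in $r$. Consequently your displayed definition of $h^{(\alpha)}_{\beta\gamma}(m,n,r)$ contains genuinely negative powers of $7$ and is not integer-valued.

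The paper repairs this by establishing (computationally) the sharper expansion $\hat w(k)=-b_k(z)(1+7x)^{7(k-2)}=\sum_l \hat v(k,l)\,7^{\phi(l)}x^l$ with $\phi(l)=\lfloor(7l+17)/9\rfloor$, isolating the two exceptional constant terms at $k\in\{7,14\}$ which do \emph{not} carry an extra factor of $7$. With $\phi$ one then gets $\pi^{(\alpha)}_{\beta\gamma}(m,r-l)+\phi(l)\ge\pi^{(\alpha)}_{\beta\gamma}(m,r)+1$ for all contributing $l$, so every term except the two constants is strictly subdominant, and the $k=7,14$ constants supply exactly the power $7^{\pi^{(\alpha)}_{\beta\gamma}(m,r)}$ needed. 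That extra $+1$ is not cosmetic: it is reused immediately afterward in the proof of Theorem~\ref{hcongred37}.
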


\begin{proof}

We suppose that (\ref{adsfadaiie33}) holds for $U^{(\alpha)}\left( \frac{y^{\beta}x^{m}}{(1+7x)^{n-k}} \right)$ for some fixed $m\ge 1$ and $n$ with $1\le k\le 14$.  We show that the relation will then apply to $U^{(\alpha)}\left( \frac{y^{\beta}x^{m}}{(1+7x)^{n}} \right)$.

Returning to (\ref{ualphaxmznmk}), we have
\begin{align*}
U^{(\alpha)}&\left( \frac{y^{\beta}x^{m}}{(1+7x)^{n}} \right)\nonumber\\ =& \frac{-1}{(1+7x)^{14}}\sum_{k=1}^{14}b_k(z(\tau))\cdot U^{(\alpha)}\left( \frac{y^{\beta}x^{m}}{(1+7x)^{n-k}} \right)\\
=& \frac{-1}{(1+7x)^{14}}\sum_{k=1}^{14} \frac{b_k(z(\tau))}{(1+7x)^{7(n-k)+\kappa}} \sum_{\substack{0\le\gamma\le 1,\\ r\ge 1-\gamma}} h^{(\alpha)}_{\beta,\gamma}(m,n-k,r)\cdot 7^{\pi^{(\alpha)}_{\beta,\gamma}(m,r)}\cdot y^{\gamma}x^r\\
=&\frac{1}{(1+7x)^{7n+\kappa}}\sum_{k=1}^{14} \hat{w}(k)\sum_{\substack{0\le\gamma\le 1,\\ r\ge 1-\gamma}} h^{(\alpha)}_{\beta,\gamma}(m,n-k,r)\cdot 7^{\pi^{(\alpha)}_{\beta,\gamma}(m,r)}\cdot y^{\gamma}x^r,
\end{align*} in which we can expand
\begin{align*}
\hat{w}(k) :&= -b_k(z(\tau))(1+7x)^{7(k-2)}\\
&=\begin{cases}
& \displaystyle{\sum_{l=0}^{84} \hat{v}(k,l)\cdot 7^{\phi(l)}\cdot x^l},\ k\neq 7,14\\
&\displaystyle{25398809+\sum_{l=1}^{84} \hat{v}(7,l)\cdot 7^{\phi(l)}\cdot x^l},\ k=7,\\
&\displaystyle{-1+\sum_{l=1}^{84} \hat{v}(14,l)\cdot 7^{\phi(l)}\cdot x^l},\ k=14,
\end{cases}\\
\phi(l) :&= \left\lfloor \frac{7l+17}{9} \right\rfloor.
\end{align*}  From the Appendix, we note that $b_0$ has minimum degree 14, and $b_1$ has minimum degree 7.  As such, $\hat{w}(k)$ is always a polynomial in $x$.

Notice also that $25398809\equiv 2\bmod{49}$.  We can now express
\begin{align}
U^{(\alpha)}&\left( \frac{y^{\beta}x^{m}}{(1+7x)^{n}} \right) = \frac{1}{(1+7x)^{7n+\kappa}}\nonumber\\
\times & \Bigg(\sum_{\substack{0\le\gamma\le 1,\\ k\neq 7,14,\\ 0\le l\le 84,\\ r\ge 1-\gamma}} \hat{v}(k,l)\cdot h^{(\alpha)}_{\beta,\gamma}(m,n-k,r)\cdot 7^{\pi^{(\alpha)}_{\beta,\gamma}(m,r) + \phi(l)}\cdot y^{\gamma}x^{r+l}\label{reliancenm5a3}\\
&+\sum_{\substack{0\le\gamma\le 1\\ k=7,14,\\ 1\le l\le 84,\\ r\ge 1-\gamma}} \hat{v}(k,l)\cdot h^{(\alpha)}_{\beta,\gamma}(m,n-k,r)\cdot 7^{\pi^{(\alpha)}_{\beta,\gamma}(m,r) + \phi(l)}\cdot y^{\gamma}x^{r+l}\label{reliancenm5a2}\\
&+\sum_{\substack{0\le\gamma\le 1\\ r\ge 1-\gamma}}\left( 25398809 h^{(\alpha)}_{\beta,\gamma}(m,n-7,r) - h^{(\alpha)}_{\beta,\gamma}(m,n-14,r)\right)\cdot 7^{\pi^{(\alpha)}_{\beta,\gamma}(m,r)} \cdot y^{\gamma}x^{r}\Bigg).\label{reliancenm5a}
\end{align}  Relabeling our powers of $x$, we have
\begin{align}
U^{(\alpha)}&\left( \frac{y^{\beta}x^{m}}{(1+7x)^{n}} \right) = \frac{1}{(1+7x)^{7n+\kappa}}\nonumber\\
\times & \Bigg(\sum_{\substack{0\le\gamma\le 1,\\ k\neq 7,14,\\ 0\le l\le 84,\\ r\ge 1+l-\gamma}} \hat{v}(k,l)\cdot h^{(\alpha)}_{\beta,\gamma}(m,n-k,r-l)\cdot 7^{\pi^{(\alpha)}_{\beta,\gamma}(m,r-l) + \phi(l)}\cdot y^{\gamma}x^{r}\label{reliancenm5a3b}\\
&+\sum_{\substack{0\le\gamma\le 1\\ k=7,14,\\ 1\le l\le 84,\\ r\ge 1+l-\gamma}} \hat{v}(k,l)\cdot h^{(\alpha)}_{\beta,\gamma}(m,n-k,r-l)\cdot 7^{\pi^{(\alpha)}_{\beta,\gamma}(m,r-l) + \phi(l)}\cdot y^{\gamma}x^{r}\label{reliancenm5a2b}\\
&+\sum_{\substack{0\le\gamma\le 1\\ r\ge 1-\gamma}}\left( 25398809 h^{(\alpha)}_{\beta,\gamma}(m,n-7,r) - h^{(\alpha)}_{\beta,\gamma}(m,n-14,r)\right)\cdot 7^{\pi^{(\alpha)}_{\beta,\gamma}(m,r)} \cdot y^{\gamma}x^{r}\Bigg).\label{reliancenm5b}
\end{align}  For any fixed $m$, $\pi^{(\alpha)}_{\beta\gamma}(m,r)$ has the form
\begin{align*}
\pi^{(\alpha)}_{\beta\gamma}(m,r) = \left\lfloor \frac{7r+M}{9} \right\rfloor
\end{align*} for some $M\in\mathbb{Z}$.  With this in mind, we have
\begin{align*}
\pi^{(\alpha)}_{\beta\gamma}(m,r) + \phi(l) =& \left\lfloor \frac{7r+M}{9} \right\rfloor + \left\lfloor \frac{7l+17}{9} \right\rfloor\ge \left\lfloor \frac{7(r+l)+M+9}{9} \right\rfloor\ge \pi^{(\alpha)}_{\beta\gamma}(m,r+l) + 1.
\end{align*}  Relabeling, we have
\begin{align*}
\pi^{(\alpha)}_{\beta\gamma}(m,r-l) + \phi(l) \ge \pi^{(\alpha)}_{\beta\gamma}(m,r) + 1,
\end{align*} and the dominating power of 7 is $\pi^{(\alpha)}_{\beta\gamma}(m,r)$ in (\ref{reliancenm5b}).
\end{proof}

\begin{proof}[Proof of Theorem \ref{thmrelAA}]
Every function $\pi^{(\alpha)}_{\beta\gamma}$ in the Appendix has an associated form $\hat{\pi}^{(\alpha)}_{\beta\gamma}$ (\ref{pihatvalues}) for $m\ge 4$.  For $1\le m\le 3$,
\begin{align*}
\pi^{(\alpha)}_{\beta\gamma}(m,r)\ge\hat{\pi}^{(\alpha)}_{\beta\gamma}(m,r).
\end{align*}  Therefore, we can first use Lemma \ref{lowerboundpiabc} by checking the cases $1\le m\le 14$, $1\le n\le 14$, thus proving Theorem \ref{thmrelAA} in all but some cases for $1\le m\le 3$, and $m=0$.  For $1\le m\le 3$ we verify these cases for $1\le n\le 14$.  By Lemma \ref{lowerboundpiabc0}, we have verified these exceptional cases.

We similarly check the exceptional cases for $m=0$.
\end{proof}

\subsection{Auxiliary Coefficients}

As a consequence of Lemma \ref{lowerboundpiabc0}, we have the following very important result:

\begin{theorem}\label{hcongred37}
The congruence
\begin{align}
h^{(\alpha)}_{\beta\gamma}(m,n,r)\equiv h^{(\alpha)}_{\beta\gamma}(m,n+7,r)\pmod{49}.\label{congcondh}
\end{align} applies in the following cases:
\begin{align}
&h^{(1)}_{00}(m,n,r),\ 1\le m\le 4,\ 1\le r\le 14,\label{congcondh100}\\
&h^{(1)}_{00}(m,n,r),\ m=5,\ r=1,\label{congcondh100a}\\
&h^{(1)}_{01}(m,n,r),\ 1\le m\le 4,\ 1\le r\le 14,\label{congcondh101}\\
&h^{(1)}_{10}(m,n,r),\ 0\le m\le 2,\ 1\le r\le 14,\label{congcondh110}\\
&h^{(1)}_{10}(m,n,r),\ m=3,\ r=1\label{congcondh110a}\\
&h^{(1)}_{11}(m,n,r),\ 1\le m\le 14,\ 1\le r\le 14,\label{congcondh111}\\
&h^{(0)}_{\beta\gamma}(m,n,r),\ 1\le m\le 14,\ 1\le r\le 14.\label{congcondh000}
\end{align}
\end{theorem}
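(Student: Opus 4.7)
The plan is to exploit the recurrence that is implicit in the proof of Lemma \ref{lowerboundpiabc0}. Equating the coefficient of $y^{\gamma}x^{r}$ in the expansion \eqref{reliancenm5a3b}--\eqref{reliancenm5b}, and noting that the constant term $25398809$ of $\hat{w}(7)$ satisfies $25398809\equiv 2\pmod{49}$ while the constant term of $\hat{w}(14)$ is $-1$, gives, for each $n\ge 15$,
\begin{align*}
h^{(\alpha)}_{\beta\gamma}(m,n,r)\equiv 2\,h^{(\alpha)}_{\beta\gamma}(m,n-7,r)-h^{(\alpha)}_{\beta\gamma}(m,n-14,r)+7\,\tilde{E}^{(\alpha)}_{\beta\gamma}(m,n,r)\pmod{49},
\end{align*}
where $\tilde{E}^{(\alpha)}_{\beta\gamma}(m,n,r)$ is an explicit $\mathbb{Z}$-linear combination of values $h^{(\alpha)}_{\beta\gamma}(m,n-k,r-l)$ with $1\le k\le 14$ and $0\le l\le 84$. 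The factor of $7$ in front of $\tilde{E}$ is exactly the gain furnished by the inequality $\pi^{(\alpha)}_{\beta\gamma}(m,r-l)+\phi(l)\ge\pi^{(\alpha)}_{\beta\gamma}(m,r)+1$ that was the crux of Lemma \ref{lowerboundpiabc0}.

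Define $D(n):=h^{(\alpha)}_{\beta\gamma}(m,n+7,r)-h^{(\alpha)}_{\beta\gamma}(m,n,r)$, suppressing the other parameters. Subtracting the recurrence at $n$ from that at $n+7$ produces
\begin{align*}
D(n)\equiv 2\,D(n-7)-D(n-14)+7\bigl(\tilde{E}(n+7)-\tilde{E}(n)\bigr)\pmod{49}.
\end{align*}
I would then proceed by strong induction on $n$, with an outer induction on $r$ to handle the pieces of $\tilde{E}$ that reach smaller $r$-values (and with the convention that $h^{(\alpha)}_{\beta\gamma}(m,n,r')=0$ whenever $r'<1-\gamma$). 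Under the induction hypothesis, $D(n-7)\equiv D(n-14)\equiv 0\pmod{49}$ eliminates the first two terms, while applying the same hypothesis (necessarily weakened to a mod-$7$ statement) to each argument $(m,n-k,r-l)$ occurring in $\tilde{E}$ yields $\tilde{E}(n+7)\equiv\tilde{E}(n)\pmod{7}$, so the final term contributes $0\pmod{49}$. The base cases, namely $D(n)\equiv 0\pmod{49}$ for $1\le n\le 14$ and each eligible $(m,r)$ in the relevant subcase, are verified by direct symbolic computation of $h^{(\alpha)}_{\beta\gamma}(m,n,r)$ from the defining identity of Theorem \ref{thmrelAA}, recorded in the Mathematica supplement.

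The main obstacle is bookkeeping: ensuring that the inductive hypothesis is genuinely available at every one of the deeper arguments that $\tilde{E}$ consults. This is precisely why the theorem carves up its claim into specific rectangles in $(m,r)$; outside these rectangles, $\tilde{E}$ can reach a pair at which the mod-$49$ congruence either fails or has not yet been established, breaking the induction. The two sporadic cases \eqref{congcondh100a} and \eqref{congcondh110a}, which push $m$ one step beyond the corresponding main range but only at $r=1$, correspond to the fortunate situation in which every $l\ge 1$ contribution to $\tilde{E}$ sees $r-l\le 0$ and therefore vanishes identically, so the recurrence closes even one step outside the main rectangle. By contrast, the case \eqref{congcondh111} and the entire $(\alpha)=0$ case \eqref{congcondh000} admit the full range $1\le m\le 14$ because the corresponding $\pi^{(\alpha)}_{\beta\gamma}$ supply the required $7$-power uniformly across all indices that the induction must consult. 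Carrying this out is therefore a largely mechanical, if voluminous, exercise in tracking indices and verifying boundary cases in the supplement.
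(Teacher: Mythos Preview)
Your double-induction approach is valid, but it diverges from what the paper actually does. The paper does \emph{not} run an outer induction on $r$. Instead it first establishes the stronger intermediate fact that, in each of the listed $(m,r)$-ranges, $h^{(\alpha)}_{\beta\gamma}(m,n,r)$ is actually constant modulo~$7$ as a function of~$n$ with period~$1$ (not merely period~$7$): the mod-$7$ recurrence $h(n)\equiv 2h(n-7)-h(n-14)\pmod 7$ follows immediately from the gain of one power of~$7$ in \eqref{reliancenm5a3b}--\eqref{reliancenm5a2b}, and computational verification for $1\le n\le 14$ then propagates the period-$1$ constancy to all~$n$. With $h(m,n-k,r-l)$ independent of~$k$ modulo~$7$, the sum over~$k$ in each of \eqref{reliancenm5a3b} and \eqref{reliancenm5a2b} factors as $h(m,n,r-l)\sum_k\hat v(k,l)$, and the separately computed identities $\sum_{k\neq 7,14}\hat v(k,l)\equiv 0\pmod 7$ and $\hat v(7,l)+\hat v(14,l)\equiv 0\pmod 7$ supply a \emph{second} power of~$7$. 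This yields the clean recurrence $h(n)\equiv 2h(n-7)-h(n-14)\pmod{49}$ directly, after which checking $1\le n\le 7$ finishes the proof. Your route trades the period-$1$ statement and the $\hat v$-summation identities for a nested induction; both reach the goal, but the paper's argument explains structurally \emph{why} the error term is divisible by~$49$ rather than inferring it inductively.

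One correction to your commentary: your account of why the theorem is restricted to those particular $(m,r)$-rectangles is not right. Since $\tilde E$ only ever consults the \emph{same} value of~$m$ and values $r-l\le r$, your induction never leaves the current $m$-slice, so nothing in the inductive mechanism forces the $m$-restrictions. The rectangles are simply the cases required later in Section~\ref{sectionstable}; the statement could likely be widened, but the paper only verifies (and only needs) these.
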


\begin{proof}
Reexamining (\ref{reliancenm5a3b}), (\ref{reliancenm5a2b}), (\ref{reliancenm5b}), we already know that 
\begin{align*}
\pi^{(\alpha)}_{\beta\gamma}(m,r-l) + \phi(l) \ge \pi^{(\alpha)}_{\beta\gamma}(m,r) + 1,
\end{align*} so that in (\ref{reliancenm5a3b}), (\ref{reliancenm5a2b}) we end up with at least 1 additional power of 7.  This already tells us that
\begin{align*}
h^{(\alpha)}_{\beta\gamma}(m,n,r)\equiv \hat{v}(7,0) h^{(\alpha)}_{\beta,\gamma}(m,n-7,r)+ \hat{v}(14,0) h^{(\alpha)}_{\beta,\gamma}(m,n-14,r)\pmod{7}.
\end{align*}  As $\hat{v}(7,0)=25398809\equiv 2\pmod{49}$ and $\hat{v}(14,0)=-1$, if we have 
\begin{align*}
h^{(\alpha)}_{\beta\gamma}(m,n-7,r)\equiv h^{(\alpha)}_{\beta\gamma}(m,n-14,r)\pmod{7},
\end{align*} then the congruence will persist.  We go even further in our Mathematica supplement and show that for the restrictions in (\ref{congcondh100})-(\ref{congcondh000}), we have
\begin{align*}
h^{(\alpha)}_{\beta\gamma}(m,n,r)\equiv h^{(\alpha)}_{\beta\gamma}(m,n+1,r)\pmod{7}.
\end{align*}  That is, for a fixed $m,r$ in our range, $h^{(\alpha)}_{\beta\gamma}$ keeps the same congruence value mod 7 no matter how we vary $n$. Thus, we certainly have the congruence mod 7.

To account for the additional power of 7, we examine our sum over $k$ in (\ref{reliancenm5a3b}).  Notice that we can therefore factor out our power $7^{\pi^{(\alpha)}_{\beta\gamma}(m,r-l)+\phi(l)}$:
\begin{align*}
&\sum_{\substack{0\le\gamma\le 1,\\ 0\le l\le 84,\\ r\ge 1+l-\gamma}}\sum_{\substack{1\le k\le 14,\\ k\neq 7,14}} \hat{v}(k,l)\cdot h^{(\alpha)}_{\beta,\gamma}(m,n-k,r-l)\cdot 7^{\pi^{(\alpha)}_{\beta,\gamma}(m,r-l) + \phi(l)}\\
&=\sum_{\substack{0\le\gamma\le 1,\\ 0\le l\le 84,\\ r\ge 1+l-\gamma}} 7^{\pi^{(\alpha)}_{\beta,\gamma}(m,r-l) + \phi(l)} \sum_{\substack{1\le k\le 14,\\ k\neq 7,14}} \hat{v}(k,l)\cdot h^{(\alpha)}_{\beta,\gamma}(m,n-k,r-l).
\end{align*}  If we take the internal sum over $k$ while holding $r$, $l$ fixed, we have
\begin{align*}
&\sum_{\substack{1\le k\le 14,\\ k\neq 7,14}} \hat{v}(k,l)\cdot h^{(\alpha)}_{\beta,\gamma}(m,n-k,r-l) \equiv h^{(\alpha)}_{\beta,\gamma}(m,n,r-l) \sum_{\substack{1\le k\le 14,\\ k\neq 7,14}} \hat{v}(k,l)\pmod{7},
\end{align*} since $h^{(\alpha)}_{\beta\gamma}$ has the same congruence value mod 7. Finally, we compute in our Mathematica supplement that for any fixed $l$, we have
\begin{align*}
&\sum_{\substack{1\le k\le 14,\\ k\neq 7,14}} \hat{v}(k,l)\equiv 0 \pmod{7}.
\end{align*}  Similarly, we compute that for any fixed $l$,
\begin{align*}
&\hat{v}(7,l)+\hat{v}(14,l)\equiv 0 \pmod{7}.
\end{align*}  So we know that the sums in (\ref{reliancenm5a3b}), (\ref{reliancenm5a2b}) are both divisible by 49 (in addition to the factor $7^{\pi^{(\alpha)}_{\beta\gamma}(m,r)}$), and that therefore
\begin{align*}
h^{(\alpha)}_{\beta\gamma}(m,n,r)\equiv 2 h^{(\alpha)}_{\beta,\gamma}(m,n-7,r) - h^{(\alpha)}_{\beta,\gamma}(m,n-14,r)\pmod{49}.
\end{align*}  We therefore need to directly check 
\begin{align*}
h^{(\alpha)}_{\beta\gamma}(m,n,r)\equiv h^{(\alpha)}_{\beta,\gamma}(m,n+7,r)\pmod{49}.
\end{align*} for $m,r$ in our range, and $1\le n\le 7$.  We check this computationally in the supplement.
\end{proof}

\subsection{Initial Values}\label{sectioninitialetc}

The relations of Theorem \ref{thmrelAA} for $1\le m\le 14$, $1\le n\le 14$ may be directly checked computationally.  Notably, these 196 different relations may be reduced to a smaller number of relations in the following manner:
\begin{align*}
U^{(\alpha)}\left( \frac{y^{\beta}x^{m}}{(1+7x)^{n}} \right) =& \frac{1}{7^m}\cdot U^{(\alpha)}\left( \frac{y^{\beta}(z-1)^{m}}{z^{n}} \right)\\
=& \frac{1}{7^m}\sum_{r=0}^{m}(-1)^{m-r}{{m}\choose{r}}\cdot U^{(\alpha)}\left( z^{r-n} \right)\\
=& \frac{1}{7^m}\sum_{r=0}^{m}(-1)^{m-r}{{m}\choose{r}}\cdot U^{(\alpha)}\left( (1+7x)^{r-n} \right).
\end{align*}  Moreover, by Theorem \ref{modeqnz}, we have
\begin{align*}
U^{(\alpha)}\left( y^{\beta}(1+7x)^n \right) &= -\sum_{k=0}^{13} b_k(z(\tau))\cdot U^{(\alpha)}\left( y^{\beta}(1+7x)^{k+n-14} \right).
\end{align*}  This allows us to ultimately express $U^{(\alpha)}\left( \frac{y^{\beta}x^{m}}{(1+7x)^{n}} \right)$ in terms of $U^{(\alpha)}\left( y^{\beta}(1+7x)^{n} \right)$ for positive $n$.  Of course, for positive $n$ we have
\begin{align*}
U^{(\alpha)}\left( y^{\beta}(1+7x)^n \right) &= \sum_{k=0}^n {{n}\choose{k}}\cdot 7^k\cdot U^{(\alpha)}\left( y^{\beta}x^k \right).
\end{align*}  Finally, using Corollary \ref{cor1},
\begin{align*}
U^{(\alpha)}\left( y^{\beta}x^m \right) &= -\sum_{j=0}^{13} a_j(\tau)\cdot U^{(\alpha)}\left( y^{\beta}x^{m+j-14} \right).
\end{align*}  Therefore, if we know $U^{(\alpha)}\left( y^{\beta}z^k \right)$ for 14 consecutive values of $k$, and of course for $\beta\in\{0,1\}$, then we can utilize Theorems \ref{modeqnx}, \ref{modeqnz} to construct our more general relations.

We have the full construction in our Mathematica supplement, which may be found online at \url{https://www3.risc.jku.at/people/nsmoot/d7congsuppG.nb}.  Each of our fundamental relations will have the form

\begin{align}
U^{(\alpha)}\left( y^{\beta}z^k \right) &= f(\alpha,\beta,k,1/z,y)\text{, for } -6\le k\le -1,\label{initialuaybxketc}\\
U^{(\alpha)}\left( y^{\beta}z^k \right) &= f(\alpha,\beta,k,z,y)\text{, for } 0\le k\le 7,\label{initialuaybxkaetc}\\
\text{for some }f(\alpha,\beta,k,z,y)&\in\mathbb{C}[z]\oplus y\mathbb{C}[z].\label{fdefnetc}
\end{align}  We list these in the opening section of our supplement.

We first consider (\ref{initialuaybxketc}).  Recalling the definition of $U^{(\alpha)}$, we have
\begin{align*}
U_7\left(\mathcal{A}^{1-\alpha} y^{\beta}(\tau)z(\tau)^k \right)=f(\alpha,\beta,k,1/z,y).
\end{align*}  There are various different ways to establish this equality, but we will use a straightforward approach: we will multiply both sides by an eta quotient which will induce a pole at $[\infty]$ and annihilate all other poles.  We then have only to compare the principal parts and constants of either side.  If they match, then we must have full equality.  In our supplement we will justify many of our formulae using results from \cite{Radu}.

We have $\mathcal{A}^{1-\alpha} y^{\beta}(\tau)z(\tau)^k\in\mathcal{M}\left(  \Gamma_0(98)\right)$.  Notice that we work with $z$ rather than $x$, because we have theorems which determine the poles and zeros of eta quotients, e.g., \cite[Theorem 23]{Radu}.

In our Mathematica supplement, we show that for $f$ defined as in (\ref{fdefnetc}), if
\begin{align*}
\mathfrak{m}(\tau) := \frac{1}{q^{4}}\frac{(q^2;q^2)_{\infty}^{5}(q^7;q^7)_{\infty}^{7}}{(q;q)_{\infty}(q^{14};q^{14})_{\infty}^{11}}\text{, then}
\end{align*}
\begin{align*}
\mathfrak{m}(\tau)^{45}&\cdot f(\alpha,\beta,k,1/z,y)\in\mathcal{M}^{\infty}\left( \Gamma_0(14) \right),\\
\mathfrak{m}(7\tau)^{45}&\cdot \mathcal{A}^{1-\alpha} y^{\beta}(\tau)z(\tau)^k\in\mathcal{M}^{\infty}\left( \Gamma_0(98) \right).
\end{align*}  We thus want to prove the following:
\begin{align}
U_7\left(\mathfrak{m}(7\tau)^{45}\mathcal{A}^{1-\alpha} y^{\beta}(\tau)z(\tau)^k\right)=\mathfrak{m}(\tau)^{45}f(\alpha,\beta,k,1/z,y),\text{ }-6\le k\le -1.\label{fundrelmodproveA}
\end{align}  We can directly compute and compare the principal parts and constants of either side to show that they match, thus confirming the cases.  From these relations, we systematically construct and verify the 196 initial relations we need for Theorem \ref{thmrelAA}.

For (\ref{initialuaybxkaetc}), it is easier to work with $1/z$ than with $z$.  The fundamental relations have a maximum positive $z$-power of 71.  We therefore divide both sides of the relations by $z^{71}$, and we have
\begin{align*}
U_7\left(\mathcal{A}^{1-\alpha} y^{\beta}(\tau)z(\tau)^k z(7\tau)^{-71} \right)=z^{-71}f(\alpha,\beta,k,z,y)\in\mathbb{C}[1/z]\oplus y\mathbb{C}[1/z].
\end{align*}  Taking $\mathfrak{m}$ again, we show in our supplement that
\begin{align*}
\mathfrak{m}(\tau)^{74}&\cdot z^{-71}f(\alpha,\beta,k,z,y)\in\mathcal{M}^{\infty}\left( \Gamma_0(14) \right),\\
\mathfrak{m}(7\tau)^{74}&\cdot \mathcal{A}^{1-\alpha} y^{\beta}(\tau)z(\tau)^k z(7\tau)^{-71}\in\mathcal{M}^{\infty}\left( \Gamma_0(98) \right).
\end{align*}  We thus want to prove the following:
\begin{align}
U_7\left(\mathfrak{m}(7\tau)^{74}\cdot \mathcal{A}^{1-\alpha} y^{\beta}(\tau)z(\tau)^k z(7\tau)^{-71}\right) =\mathfrak{m}(\tau)^{74}\cdot z^{-71}f(\alpha,\beta,k,z,y),\text{ }0\le k\le 7.\label{fundrelmodproveB}
\end{align}  Proving (\ref{fundrelmodproveA}), (\ref{fundrelmodproveB}) therefore confirms our fundamental relations.

We use a similar approach to prove Theorem \ref{modeqnz}, also in the supplement.  Because of the size of each relation, we do not include them here.  Every relation, and every step in our construction, is included in our supplement.

\section{Partial Stability: $\mathcal{V}^{(0)}_n\rightarrow\mathcal{V}^{(1)}_{7n+3}$}\label{section0to1full}

We can now begin to prove Theorem \ref{Main}.  We have the following important result:

\begin{theorem}\label{goingv0tov1}
Let $f\in\mathcal{V}^{(0)}_n$.  Then $U^{(0)}(f)\in\mathcal{V}^{(1)}_{7n+3}.$
\end{theorem}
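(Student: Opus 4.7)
The plan is to invoke Theorem \ref{thmrelAA} with $\alpha=0$, applied termwise to the defining expansion of $f$. By hypothesis,
\[
f = \frac{1}{(1+7x)^n}\sum_{\substack{0\le\beta\le 1\\ m\ge 1-\beta}} s_\beta(m)\cdot 7^{\theta^{(0)}_\beta(m)}\cdot y^\beta x^m,
\]
with each $s_\beta$ integer-valued and finitely supported. Since $U^{(0)}$ is $\mathbb{Z}$-linear and $(\alpha)=0$ forces $\kappa=3$ in Theorem \ref{thmrelAA}, expanding each basis piece and interchanging summations gives
\[
U^{(0)}(f) = \frac{1}{(1+7x)^{7n+3}}\sum_{\substack{0\le\gamma\le 1\\ r\ge 1-\gamma}} 7^{\theta^{(1)}_\gamma(r)}\,y^\gamma x^r \cdot c_{\gamma}(r),
\]
where
\[
c_\gamma(r) := \sum_{\beta,m} s_\beta(m)\,h^{(0)}_{\beta\gamma}(m,n,r)\,7^{\theta^{(0)}_\beta(m) + \pi^{(0)}_{\beta\gamma}(m,r) - \theta^{(1)}_\gamma(r)}.
\]
The denominator $(1+7x)^{7n+3}$ already matches the exponent required for membership in $\mathcal{V}^{(1)}_{7n+3}$, so the theorem reduces to showing that each $c_\gamma(r)$ is a well-defined integer and that the family $\{c_\gamma(r)\}$ has finite support.

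Finite support is immediate: $s_\beta$ is finitely supported in $m$, and for each fixed $(\beta,m,n)$ the array $h^{(0)}_{\beta\gamma}(m,n,\cdot)$ is finitely supported in $r$ by Theorem \ref{thmrelAA}, so only finitely many $(\beta,m)$ contribute to any given $(\gamma,r)$ and only finitely many $(\gamma,r)$ appear at all. Integrality of $c_\gamma(r)$ reduces to verifying the floor-function inequality
\[
\theta^{(0)}_\beta(m) + \pi^{(0)}_{\beta\gamma}(m,r) \;\ge\; \theta^{(1)}_\gamma(r)
\]
for every $\beta,\gamma\in\{0,1\}$, $m\ge 1-\beta$, $r\ge 1-\gamma$. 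For $m\ge 4$, Lemma \ref{lowerboundpiabc} writes $\pi^{(0)}_{\beta\gamma}(m,r) = \lfloor (7r - m + \epsilon^{(0)}_{\beta\gamma})/9 \rfloor$, and the Appendix presents each $\theta^{(\alpha)}_\beta$ as a piecewise floor-linear function; the inequality then reduces to a finite check on residues modulo $9$ together with a comparison of linear intercepts. The small-$m$ range $1-\beta\le m\le 3$, together with analogous boundary values of $r$, is a finite set handled directly.

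The main obstacle is the bookkeeping rather than any conceptual subtlety: the inequality must be verified across the four combinations $(\beta,\gamma)\in\{0,1\}^2$, across each residue class of $r$ modulo $9$, and on the small boundary regions of $m$ and $r$ where the generic formulas for $\pi$ or $\theta$ are replaced by special values. None of the individual checks is hard, but there are enough of them to make computer verification advisable, dovetailing naturally with the initial-relation computation of Section \ref{sectioninitialetc}. The key structural point is that the bound $\pi^{(0)}_{\beta\gamma}(m,r)$ produced in Theorem \ref{thmrelAA} was engineered to grow just quickly enough in $r$ to absorb the faster growth of $\theta^{(1)}_\gamma(r)$ relative to $\theta^{(0)}_\beta(m)$; had the bound been any weaker, stability would fail here and we would be forced to track deviant terms, as genuinely happens for $U^{(1)}$ in the next section.
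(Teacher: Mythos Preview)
Your proposal is correct and follows essentially the same approach as the paper: write $f$ in its defining form, apply Theorem~\ref{thmrelAA} termwise with $\kappa=3$, and reduce membership in $\mathcal{V}^{(1)}_{7n+3}$ to the inequality $\pi^{(0)}_{\beta\gamma}(m,r)+\theta^{(0)}_{\beta}(m)\ge\theta^{(1)}_{\gamma}(r)$. The only difference is that the paper actually carries out this inequality by hand via an explicit case split over $(\beta,\gamma)\in\{0,1\}^2$ and the boundary values of $m,r$ (Subsections~4.1--4.4), whereas you sketch the reduction and defer to computer verification; note also that the piecewise regimes of the $\pi^{(0)}_{\beta\gamma}$ in the Appendix are governed by small $r$ (and $m=0$) rather than small $m$, so your description of the case boundaries is slightly off, though this does not affect the argument.
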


Thus, applying $U^{(0)}$ to a function like $L_{2\alpha}$ will result in a function which, at least superficially, resembles the predicted form of $L_{2\alpha+1}$.

\begin{proof}

We let $f\in\mathcal{V}^{(0)}_n$ by hypothesis.  Let us denote
\begin{align*}
f=\frac{1}{(1+7x)^n}\sum_{\substack{0\le\beta\le 1\\ m\ge 1-\beta}}s_{\beta}(m)7^{\theta^{(0)}_{\beta}(m)}y^{\beta}x^m.
\end{align*}  
Applying $U^{(0)}$ and remembering that the operator is linear, using Theorem \ref{thmrelAA}, we have
\begin{align}
U^{(0)}\left( f \right) =& \frac{1}{(1+7x)^{7n+3}}\sum_{\substack{0\le\beta\le 1\\ m\ge 1-\beta\\ 0\le\gamma\le 1\\ r\ge 1-\gamma}} s_{\beta}(m)h^{(0)}_{\beta\gamma}(m,n,r) 7^{\pi^{(0)}_{\beta\gamma}(m,r)+\theta^{(0)}_{\beta}(m)}y^{\gamma}x^r\label{u23fsad}\\
=& \frac{1}{(1+7x)^{7n+3}}\sum_{\substack{0\le\gamma\le 1\\ r\ge 1-\gamma}} t_{\gamma}(r) 7^{\theta^{(1)}_{\gamma}(r)}y^{\gamma}x^r,
\end{align} for
\begin{align*}
t_{\gamma}(r) =& \sum_{\substack{0\le\beta\le 1\\ m\ge 1-\beta}} s_{\beta}(m)h^{(0)}_{\beta\gamma}(m,n,r) 7^{\pi^{(0)}_{\beta\gamma}(m,r)+\theta^{(0)}_{\beta}(m)-\theta^{(1)}_{\gamma}(r)}.
\end{align*}  It remains for us to check that
\begin{align*}
\pi^{(0)}_{\beta\gamma}(m,r)+\theta^{(0)}_{\beta}(m)\ge \theta^{(1)}_{\gamma}(r),
\end{align*} which confirms membership in $\mathcal{V}^{(1)}_{7n+3}$.  This is verified in the subsections below.

\end{proof}

\subsection{$\displaystyle{\pi^{(0)}_{00}(m,r)+\theta^{(0)}_0(m)\ge \theta^{(1)}_0(r)}$}

Here our bounds are $m\ge 1$, $r\ge 1$.\\

\begin{enumerate}
\item $r\ge 4$, $m\ge 4$:
\begin{align*}
&\left\lfloor \frac{7r-m-23}{9} \right\rfloor + \left\lfloor \frac{7m-16}{9} \right\rfloor\ge \left\lfloor \frac{7r+6m-47}{9} \right\rfloor\\
&\ge \left\lfloor \frac{7r-28}{9} \right\rfloor + \left\lfloor \frac{6m-19}{9} \right\rfloor\\
&\ge \theta^{(1)}_0(r)
\end{align*}
\item $r\ge 4$, $1\le m\le 3$:
\begin{align*}
&\left\lfloor \frac{7r-m-23}{9} \right\rfloor + 0\ge\left\lfloor \frac{7r-26}{9} \right\rfloor\\
&\ge \theta^{(1)}_0(r)
\end{align*}
\item $r=3$, $m\ge 3$:
\begin{align*}
&-1 + \left\lfloor \frac{7m-16}{9} \right\rfloor\ge -1\\
&=\theta^{(1)}_0(r)
\end{align*}
\item $1\le r\le 3$, $1\le m\le 2$:
\begin{align*}
&-1 + \theta^{(0)}_0(m)\ge -1\\
&= \theta^{(1)}_0(r)
\end{align*}
\end{enumerate}

\subsection{$\displaystyle{\pi^{(0)}_{01}(m,r)+\theta^{(0)}_0(m)\ge\theta^{(1)}_1(r)}$}

Here our bounds are $m\ge 1$, $r\ge 0$.\\

\begin{enumerate}
\item $r\ge 4$, $m\ge 4$:
\begin{align*}
&\left\lfloor \frac{7r-m-9}{9} \right\rfloor + \left\lfloor \frac{7m-16}{9} \right\rfloor\ge\left\lfloor \frac{7r+6m-33}{9} \right\rfloor\\
&\ge \left\lfloor \frac{7r-14}{9} \right\rfloor + \left\lfloor \frac{6m-19}{9} \right\rfloor\\
&\ge \theta^{(1)}_1(r)
\end{align*}
\item $r\ge 4$, $1\le m\le 3$:
\begin{align*}
&\left\lfloor \frac{7r-m-9}{9} \right\rfloor + 0\ge \left\lfloor \frac{7r-12}{9} \right\rfloor\\
&\ge \theta^{(1)}_1(r)
\end{align*}
\item $2\le r\le 3$, $m\ge 4$:
\begin{align*}
&\left\lfloor \frac{7r-m-9}{9} \right\rfloor + \left\lfloor \frac{7m-16}{9} \right\rfloor\ge\left\lfloor \frac{7r+6m-33}{9} \right\rfloor\\
&\ge \left\lfloor \frac{7r-14}{9} \right\rfloor + \left\lfloor \frac{6m-19}{9} \right\rfloor\\
&\ge \theta^{(1)}_1(r)
\end{align*}
\item $2\le r\le 3$, $1\le m\le 3$:
\begin{align*}
&\left\lfloor \frac{7r-m-9}{9} \right\rfloor + 0\ge \left\lfloor \frac{7r-12}{9} \right\rfloor\\
&\ge\theta^{(1)}_1(r)
\end{align*}
\item $0\le r\le 1$, $m\ge 1$:
\begin{align*}
-1 + \theta^{(0)}_0(m)\ge -1 = \theta^{(1)}_1(r)
\end{align*}
\end{enumerate}

\subsection{$\displaystyle{\pi^{(0)}_{10}(m,r)+\theta^{(0)}_1(m)\ge\theta^{(1)}_0(r)}$}

Here our bounds are $m\ge 0$, $r\ge 1$.\\

\begin{enumerate}
\item $r\ge 4$, $m\ge 2$:
\begin{align*}
&\left\lfloor \frac{7r-m-27}{9} \right\rfloor + \left\lfloor \frac{7m-5}{9} \right\rfloor\ge \left\lfloor \frac{7r+6m-40}{9} \right\rfloor\\
&\ge\left\lfloor \frac{7r-28}{9} \right\rfloor + \left\lfloor \frac{6m-12}{9} \right\rfloor\\
&\ge \theta^{(1)}_0(r)
\end{align*}
\item $r\ge 4$, $m=1$:
\begin{align*}
\left\lfloor \frac{7r-28}{9} \right\rfloor + 0\ge \theta^{(1)}_0(r)
\end{align*}
\item $r\ge 4$, $m=0$:
\begin{align*}
\left\lfloor \frac{7r-28}{9} \right\rfloor + 0 \ge \theta^{(1)}_0(r)
\end{align*}
\item $1\le r\le 3$, $m\ge 0$:
\begin{align*}
-1 + \theta^{(0)}_1(m)\ge -1 = \theta^{(1)}_0(r).
\end{align*}
\end{enumerate}

\subsection{$\displaystyle{\pi^{(0)}_{11}(m,r)+\theta^{(0)}_1(m)\ge\theta^{(1)}_1(r)}$}

Here our bounds are $m\ge 0$, $r\ge 0$.\\

\begin{enumerate}
\item $r\ge 2$, $m\ge 3$:
\begin{align*}
&\left\lfloor \frac{7r-m-13}{9} \right\rfloor + \left\lfloor \frac{7m-5}{9} \right\rfloor\ge \left\lfloor \frac{7r-6m-26}{9} \right\rfloor\\
&\ge\left\lfloor \frac{7r-14}{9} \right\rfloor + \left\lfloor \frac{6m-12}{9} \right\rfloor\\
&\ge\theta^{(1)}_1(r)
\end{align*}
\item $r\ge 2$, $m=2$:
\begin{align*}
&\left\lfloor \frac{7r-15}{9} \right\rfloor + 1\ge \left\lfloor \frac{7r-6}{9} \right\rfloor\\
&\ge\theta^{(1)}_1(r)
\end{align*}
\item $r\ge 2$, $0\le m\le 1$:
\begin{align*}
&\left\lfloor \frac{7r-m+5}{9} \right\rfloor + 0\ge \left\lfloor \frac{7r+4}{9} \right\rfloor\\
&\ge \theta^{(1)}_1(r)
\end{align*}
\item $0\le r\le 1$, $m\ge 0$:
\begin{align*}
-1 + \theta^{(0)}_1(m)\ge -1 = \theta^{(1)}_1(r)
\end{align*}
\end{enumerate}

\section{Partial Stability: $\mathcal{V}^{(1)}_n\rightarrow\mathcal{V}^{(0)}_{7n}$}\label{section1to0part}

We now come to the more difficult step.  We would like to prove a result analogous to that of Theorem \ref{goingv0tov1}.  Suppose we have some

\begin{align*}
f\in\mathcal{V}^{(1)}_n.
\end{align*}  We want to prove that $U^{(1)}(f)\in\mathcal{V}^{(1)}_{7n}$.  Again, we can denote
\begin{align*}
f=\frac{1}{(1+7x)^n}\sum_{\substack{0\le\beta\le 1\\ m\ge 1-\beta}}s_{\beta}(m)7^{\theta^{(1)}_{\beta}(m)}y^{\beta}x^m,
\end{align*} and upon applying $U^{(1)}$,
\begin{align*}
U^{(1)}\left( f \right) =& \frac{1}{(1+7x)^{7n}}\sum_{\substack{0\le\beta\le 1\\ m\ge 1-\beta\\ 0\le\gamma\le 1\\ r\ge 1-\gamma}} s_{\beta}(m)h^{(1)}_{\beta\gamma}(m,n,r) 7^{\pi^{(1)}_{\beta\gamma}(m,r)+\theta^{(1)}_{\beta}(m)}y^{\gamma}x^r\\
=& \frac{1}{(1+7x)^{7n}}\sum_{\substack{0\le\gamma\le 1\\ r\ge 1-\gamma}} t_{\gamma}(r) 7^{\theta^{(0)}_{\gamma}(r)}y^{\gamma}x^r,
\end{align*} for
\begin{align}
t_{\gamma}(r) =& \sum_{\substack{0\le\beta\le 1\\ m\ge 1-\beta}} s_{\beta}(m)h^{(1)}_{\beta\gamma}(m,n,r) 7^{\pi^{(1)}_{\beta\gamma}(m,r)+\theta^{(1)}_{\beta}(m)-\theta^{(0)}_{\gamma}(r)}.\label{tgammadefn}
\end{align}  Ideally, we would want to confirm that
\begin{align*}
\pi^{(1)}_{\beta\gamma}(m,r)+\theta^{(1)}_{\beta}(m)\ge \theta^{(0)}_{\gamma}(r) + 1.
\end{align*}  This will \textit{not} be true for all cases.  However, we can bound the counterexamples to a finite set of $(m,r)$.

\subsection{$\displaystyle{\pi^{(1)}_{00}(m,r)+\theta^{(1)}_0(m)\ge \theta^{(0)}_0(r)+1}$}\label{100bounds}

Here our bounds are $m\ge 1$, $r\ge 1$.\\

\begin{enumerate}
\item $r\ge 3$, $m\ge 5$:
\begin{align*}
&\left\lfloor \frac{7r-m}{9} \right\rfloor + \left\lfloor \frac{7m-28}{9} \right\rfloor\ge \left\lfloor \frac{7r-6m-36}{9} \right\rfloor\\
&\ge\left\lfloor \frac{7r-16}{9} \right\rfloor + \left\lfloor \frac{6m-20}{9} \right\rfloor\\
&\ge\theta^{(0)}_0(r)+1
\end{align*}
\item $r\ge 3$, $m=4$:
\begin{align*}
&\left\lfloor \frac{7r-4}{9} \right\rfloor + 0\ge \left\lfloor \frac{7r-13}{9} \right\rfloor + 1\\
&\ge\theta^{(0)}_0(r)+1
\end{align*}
\item $r\ge 9$, $1\le m\le 3$:
\begin{align*}
&\left\lfloor \frac{7r+2}{9} \right\rfloor -1 \ge \left\lfloor \frac{7r-7}{9} \right\rfloor\\
&\ge\theta^{(0)}_0(r)+1.
\end{align*}
\item $r=2$, $m\ge 6$:
\begin{align*}
&\left\lfloor \frac{14-m}{9} \right\rfloor + \left\lfloor \frac{7m-28}{9} \right\rfloor\ge\left\lfloor \frac{6m-22}{9} \right\rfloor \\
&\ge\theta^{(0)}_0(2)+1.
\end{align*}
\item $r=2$, $4\le m\le 5$:
\begin{align*}
&\left\lfloor \frac{14-m}{9} \right\rfloor + \left\lfloor \frac{7m-28}{9} \right\rfloor \ge 0 + 1 \\
&\ge\theta^{(0)}_0(2)+1.
\end{align*}
\item $r=1$, $m\ge 6$:
\begin{align*}
&\left\lfloor \frac{7-m}{9} \right\rfloor + \left\lfloor \frac{7m-28}{9} \right\rfloor \ge 0 + 1 \\
&\ge\theta^{(0)}_0(1)+1.
\end{align*}
\end{enumerate}  Our deviant cases are:

\begin{itemize}
\item $2\le r\le 8$, $1\le m\le 3$,
\item $r=1$, $1\le m\le 5$.
\end{itemize}

\subsection{$\displaystyle{\pi^{(1)}_{01}(m,r)+\theta^{(1)}_0(m)\ge \theta^{(0)}_1(r)+1}$}

Here our bounds are $m\ge 1$, $r\ge 0$.\\

\begin{enumerate}
\item $r\ge 2$, $m\ge 4$:
\begin{align*}
&\left\lfloor \frac{7r-m+16}{9} \right\rfloor + \left\lfloor \frac{7m-28}{9} \right\rfloor\ge\left\lfloor \frac{7r+6m-20}{9} \right\rfloor\\
&\ge\left\lfloor \frac{7r-5}{9} \right\rfloor + \left\lfloor \frac{6m-15}{9} \right\rfloor\\
&\ge\theta^{(0)}_1(r)+1
\end{align*}
\item $r\ge 2$, $1\le m\le 3$:
\begin{align*}
&\left\lfloor \frac{7r-m+16}{9} \right\rfloor -1\\
&\ge\left\lfloor \frac{7r-3+7}{9} \right\rfloor=\left\lfloor \frac{7r+4}{9} \right\rfloor\\
&\ge\left\lfloor \frac{7r-5}{9} \right\rfloor + 1\\
&\ge\theta^{(0)}_1(r)+1
\end{align*}
\item $r=1$, $m\ge 4$:
\begin{align*}
&\left\lfloor \frac{23-m}{9} \right\rfloor + \left\lfloor \frac{7m-28}{9} \right\rfloor\ge 0+\left\lfloor \frac{6m-13}{9} \right\rfloor\\
&\ge\theta^{(0)}_1(1)+1
\end{align*}
\item $r=1$, $1\le m\le 3$:
\begin{align*}
&\left\lfloor \frac{23-m}{9} \right\rfloor -1 \ge \left\lfloor \frac{20}{9} \right\rfloor - 1\\
&= 2-1\\
&\ge\theta^{(0)}_1(1)+1
\end{align*}
\item $r=0$, $m\ge 5$:
\begin{align*}
&\left\lfloor \frac{16-m}{9} \right\rfloor + \left\lfloor \frac{7m-28}{9} \right\rfloor\ge 0+\left\lfloor \frac{6m-20}{9} \right\rfloor\\
&\ge\theta^{(0)}_1(0)+1
\end{align*}
\item $r=0$, $m=4$:
\begin{align*}
&\left\lfloor \frac{12}{9} \right\rfloor + 0 = 1\\
&=\theta^{(0)}_1(0)+1
\end{align*}
\end{enumerate}

The deviant cases are

\begin{itemize}
\item $r=0$, $1\le m\le 3$.
\end{itemize}

\subsection{$\displaystyle{\pi^{(1)}_{10}(m,r)+\theta^{(1)}_1(m)\ge \theta^{(0)}_0(r)+1}$}\label{110bounds}

Here our bounds are $m\ge 0$, $r\ge 1$.\\

\begin{enumerate}
\item $r\ge 3$, $m\ge 3$:
\begin{align*}
&\left\lfloor \frac{7r-m-2}{9} \right\rfloor + \left\lfloor \frac{7m-14}{9} \right\rfloor\ge \left\lfloor \frac{7r+6m-24}{9} \right\rfloor \\
&\left\lfloor \frac{7r-16}{9} \right\rfloor + \left\lfloor \frac{6m-8}{9} \right\rfloor\\
&\ge\theta^{(0)}_0(r)+1
\end{align*}
\item $r\ge 3$, $m=2$:
\begin{align*}
&\left\lfloor \frac{7r-4}{9} \right\rfloor + 0\ge \left\lfloor \frac{7r-7}{9} \right\rfloor\\
&=\theta^{(0)}_0(r)+1
\end{align*}
\item $r\ge 11$, $0\le m\le 1$:
\begin{align*}
&\left\lfloor \frac{7r+2}{9} \right\rfloor - 1 = \left\lfloor \frac{7r-7}{9} \right\rfloor\\
&=\theta^{(0)}_0(r)+1
\end{align*}
\item $r=2$, $m\ge 4$:
\begin{align*}
&\left\lfloor \frac{12-m}{9} \right\rfloor + \left\lfloor \frac{7m-14}{9} \right\rfloor\ge \left\lfloor \frac{6m-10}{9} \right\rfloor \\
&\ge 1\\
&=\theta^{(0)}_0(2)+1
\end{align*}
\item $r=2$, $m=2$:
\begin{align*}
&\left\lfloor \frac{10}{9} \right\rfloor + 0= 1 \\
&=\theta^{(0)}_0(2)+1
\end{align*}
\item $r=1$, $m\ge 4$:
\begin{align*}
&\left\lfloor \frac{5-m}{9} \right\rfloor + \left\lfloor \frac{7m-13}{9} \right\rfloor= 0+ \left\lfloor \frac{7m-13}{9} \right\rfloor \\
&\ge 1\\
&=\theta^{(0)}_0(1)+1.
\end{align*}
\end{enumerate}

The deviant cases are
\begin{itemize}
\item $r=1$, $0\le m\le 3$,
\item $r=2$, $m=3$,
\item $2\le r\le 10$, $0\le m\le 1$.
\end{itemize}

\subsection{$\displaystyle{\pi^{(1)}_{11}(m,r)+\theta^{(1)}_1(m)\ge \theta^{(0)}_1(r)+1}$}

Here our bounds are $m\ge 0$, $r\ge 0$.\\

\begin{enumerate}
\item $r\ge 2$, $m\ge 3$:
\begin{align*}
&\left\lfloor \frac{7r-m+3}{9} \right\rfloor + 1 + \left\lfloor \frac{7m-14}{9} \right\rfloor\ge\left\lfloor \frac{7r+6m-10}{9} \right\rfloor\\
&\ge\left\lfloor \frac{7r-5}{9} \right\rfloor + \left\lfloor \frac{6m-5}{9} \right\rfloor\\
&\ge\theta^{(0)}_1(r)+1
\end{align*}
\item $r\ge 2$, $m=2$:
\begin{align*}
&\left\lfloor \frac{7r+1}{9} \right\rfloor + 1 + 0 = \left\lfloor \frac{7r+10}{9} \right\rfloor\\
&\ge\theta^{(0)}_1(r)+1
\end{align*}
\item $r\ge 7$, $0\le m\le 1$:
\begin{align*}
&\left\lfloor \frac{7r+4}{9} \right\rfloor + 1 - 1\\
&= \left\lfloor \frac{7r+4}{9} \right\rfloor\\
&= \theta^{(0)}_1(r)+1
\end{align*}
\item $r=1$, $m\ge 2$:
\begin{align*}
&\left\lfloor \frac{7r-m+3}{9} \right\rfloor + 1 +\left\lfloor \frac{7m-14}{9} \right\rfloor\ge 1\\
&= \theta^{(0)}_1(1)+1
\end{align*}
\end{enumerate}

The deviant cases are
\begin{itemize}
\item $1\le r\le 6$, $0\le m\le 1$.
\end{itemize}

\section{Congruence Ideal Stability}\label{sectionstable} 

Let us review our current situation.  We have a sequence $\left( L_{\alpha} \right)_{\alpha\ge 1}$ of modular functions, together with an alternating sequence of operators $U^{(1)}$, $U^{(0)}$ which take each $L_{\alpha}$ to $L_{\alpha+1}$.  The odd-indexed elements are apparently members of a set $\mathcal{V}^{(1)}$, and the even-indexed elements members of $\mathcal{V}^{(0)}$.  Generally, an element of $\mathcal{V}^{(\alpha)}$ should have the form
\begin{align*}
f =\frac{1}{(1+7x)^n}\sum_{\substack{0\le\beta\le 1\\ m\ge 1-\beta}}s_{\beta}(m)7^{\theta^{(\alpha)}_{\beta}(m)}y^{\beta}x^m.
\end{align*}  We understand that applying $U^{(0)}$ to an element of $\mathcal{V}^{(0)}$ (e.g., $L_{2\alpha}$) produces an element of $\mathcal{V}^{(1)}$.  Moreover, we want the application of $U^{(1)}$ to an element of $\mathcal{V}^{(1)}$ (e.g., $L_{2\alpha-1}$) to produce an element of $\mathcal{V}^{(0)}$ \textit{with an additional factor of} 7.  However, we know that this is not generally true: $U^{(1)}(f)$ will produce a rational polynomial in which most monomials will gain the necessary power of 7, but not all.  We are thus forced to conclude that these deviant monomials must somehow cancel out.  We must therefore examine the various exceptional cases and find a way to keep track of the complex behavior between the coefficients $s_{\beta}(m)$ which ensures this cancelation.

To illustrate how this can be done, we will recall our notation.  We let
\begin{align*}
U^{(1)}\left( f \right) =& \frac{1}{(1+7x)^{7n}}\sum_{\substack{0\le\beta\le 1\\ m\ge 1-\beta\\ 0\le\gamma\le 1\\ r\ge 1-\gamma}} s_{\beta}(m)h^{(1)}_{\beta\gamma}(m,n,r) 7^{\pi^{(1)}_{\beta\gamma}(m,r)+\theta^{(1)}_{\beta}(m)}y^{\gamma}x^r\\
=& \frac{1}{(1+7x)^{7n}}\sum_{\substack{0\le\gamma\le 1\\ r\ge 1-\gamma}} t_{\gamma}(r) 7^{\theta^{(0)}_{\gamma}(r)}y^{\gamma}x^r,
\end{align*} and
\begin{align*}
t_{\gamma}(r) =& \sum_{\substack{0\le\beta\le 1\\ m\ge 1-\beta}} s_{\beta}(m)h^{(1)}_{\beta\gamma}(m,n,r) 7^{\pi^{(1)}_{\beta\gamma}(m,r)+\theta^{(1)}_{\beta}(m)-\theta^{(0)}_{\gamma}(r)}.
\end{align*}  We first consider $t_0(1)$.  By Sections \ref{100bounds} and \ref{110bounds}, we have the following expression:
\begin{align*}
t_0(1) =& \frac{h^{(1)}_{00}(1,n,1)}{7} s_0(1) + \frac{h^{(1)}_{00}(2,n,1)}{7} s_0(2) + \frac{h^{(1)}_{00}(3,n,1)}{7} s_0(3) + h^{(1)}_{00}(4,n,1) s_0(4) + h^{(1)}_{00}(5,n,1) s_0(5)\\ & + \frac{h^{(1)}_{10}(0,n,1)}{7} s_1(0) + \frac{h^{(1)}_{10}(1,n,1)}{7} s_1(1) + h^{(1)}_{10}(2,n,1) s_1(2) +h^{(1)}_{10}(3,n,1) s_1(3) + 7M,
\end{align*} for some $M\in\mathbb{Z}$.  Not only do we need $t_0(1)$ to be an integer, but we need it to be a multiple of 7.  As such, we multiply through by 7 to kill the denominators, and we have the following integer relation:
\begin{align*}
&h^{(1)}_{00}(1,n,1) s_0(1) + h^{(1)}_{00}(2,n,1) s_0(2) + h^{(1)}_{00}(3,n,1) s_0(3) + 7 h^{(1)}_{00}(4,n,1) s_0(4) + 7 h^{(1)}_{00}(5,n,1) s_0(5)\\ & + h^{(1)}_{10}(0,n,1) s_1(0) + h^{(1)}_{10}(1,n,1) s_1(1) + 7 h^{(1)}_{10}(2,n,1) s_1(2) +7 h^{(1)}_{10}(3,n,1) s_1(3) \equiv 0\pmod{49}.
\end{align*}  Recall that $n\equiv 3\pmod{7}$, and all of the coefficients are subject to Theorem \ref{hcongred37}.  We can therefore restrict ourselves to working with $h^{(1)}_{\beta\gamma}(m,3,1)$, which we can readily compute for the finite number of necessary values of $m$:
\begin{align}
43 s_0(1) + 48 s_0(2) + 15 s_0(3) + 42 s_0(4) + 7 s_0(5) + 8 s_1(0) + 27 s_1(1) + 7 s_1(2) + 42 s_1(3)\equiv 0\pmod{49}.\label{idealel1}
\end{align}  If this relation is satisfied, then $t_0(1)$ will be an integer divisible by 7, as we need.

Examining (\ref{L1expressxy}) by taking $f=L_1$, we find that
\begin{align*}
s_0(1) &= 320013737\equiv 29\pmod{49},\\
s_0(2) &= 29164229489\equiv 45\pmod{49},\\
s_0(3) &= 1226655768017\equiv 1\pmod{49},\\
s_0(4) &= 4505536916704\equiv 47\pmod{49},\\
s_0(5) &= 79044206825472\equiv 42\pmod{49},\\
s_1(0) &= -320013688\equiv 20\pmod{49},\\
s_1(1) &= -28844055074\equiv 40\pmod{49},\\
s_1(2) &= -171156188528\equiv 32\pmod{49},\\
s_1(3) &= -4337927987008\equiv 0\pmod{49}.
\end{align*}  With these substitutions, we find that
\begin{align}
43 (29) + 48 (45) + 15 (1) + 42 (47) + 7 (42) + 8 (20) + 27 (40) + 7 (32) + 42 (0) = 7154= 146(49).\label{checkingL1forfirst}
\end{align}  This of course makes sense, because $L_2$ is indeed divisible by 7, and is a member of $\mathcal{V}^{(0)}_{21}$.

We can similarly examine $t_{\gamma}(r)$ for the remaining deviant cases.  As it happens, this is the only relation that we will need to take modulo 49.  The remaining necessary relations may be taken modulo powers of 7, as we note in Tables \ref{tablew00}, \ref{tablew11}.

\begin{table}[hbt!]
\begin{center}
\begin{tabular}{l|r}
 $r$      & $t_0(r)\bmod 7$\\
\hline\\
 $2$         & $h^{(1)}_{00}(1,n,2)s_0(1)+h^{(1)}_{00}(2,n,2)s_0(2)+h^{(1)}_{00}(3,n,2)s_0(3)+h^{(1)}_{10}(0,n,2)s_1(0)+h^{(1)}_{10}(1,n,2)s_1(1)$\\
 $3$         & $0$\\
 $4$         & $h^{(1)}_{10}(0,n,4)s_1(0)+h^{(1)}_{10}(1,n,4)s_1(1)$\\
 $5$         & $h^{(1)}_{00}(3,n,5)s_0(3)+h^{(1)}_{10}(0,n,5)s_1(0)$\\
 $6$         & $0$\\
 $7$         & $0$\\
 $8$         & $h^{(1)}_{10}(0,n,8)s_1(0)+h^{(1)}_{10}(1,n,8)s_1(1)$\\
 $9$         & $h^{(1)}_{10}(0,n,9)s_1(0)+h^{(1)}_{10}(1,n,8)s_1(1)$\\
 $10$       & $h^{(1)}_{10}(0,n,10)s_1(0)+h^{(1)}_{10}(1,n,10)s_1(1)$
 \end{tabular}
\vspace{0.2 cm}
\caption{Nonvanishing terms of $t_0(r)\bmod 7$}\label{tablew00}
\end{center}
\end{table}

\begin{table}[hbt!]
\begin{center}
\begin{tabular}{l|r}
 $r$      & $t_1(r)\bmod 7$\\
\hline\\
 $0$         & $h^{(1)}_{01}(3,n,0)s_0(3)+h^{(1)}_{11}(0,n,0)s_1(0)+h^{(1)}_{11}(1,n,0)s_1(1)$\\
 $1$         & $0$\\
 $2$         & $h^{(1)}_{11}(0,n,2)s_1(0)+h^{(1)}_{11}(1,n,2)s_1(1)$\\
 $3$         & $0$\\
 $4$         & $0$\\
 $5$         & $0$\\
 $6$         & $h^{(1)}_{11}(0,n,6)s_1(0)+h^{(1)}_{11}(1,n,6)s_1(1)$
 \end{tabular}
\vspace{0.2 cm}
\caption{Nonvanishing terms of $t_1(r)\bmod 7$}\label{tablew11}
\end{center}
\end{table}  These coefficients are again subject to Theorem \ref{hcongred37}.  We can simplify to the relations in Tables \ref{tablew0}, \ref{tablew1}.

\begin{table}[hbt!]
\begin{center}
\begin{tabular}{l|r}
 $r$      & $t_0(r)\bmod 7$\\
\hline\\
 $2$         & $3s_0(1)+2s_0(2)+2s_0(3)+3s_1(0)+4s_1(1)$\\
 $3$         & $0$\\
 $4$         & $4s_1(0)+5s_1(1)$\\
 $5$         & $5s_0(3)+5s_1(0)$\\
 $6$         & $0$\\
 $7$         & $0$\\
 $8$         & $3s_1(0)+2s_1(1)$\\
 $9$         & $3s_1(0)+2s_1(1)$\\
 $10$       & $s_1(0)+3s_1(1)$
 \end{tabular}
\vspace{0.2 cm}
\caption{Value of $t_0(r)\bmod 7$}\label{tablew0}
\end{center}
\end{table}

\begin{table}[hbt!]
\begin{center}
\begin{tabular}{l|r}
 $r$      & $t_1(r)\bmod 7$\\
\hline\\
 $0$         & $s_0(3)+5s_1(0)+5s_1(1)$\\
 $1$         & $0$\\
 $2$         & $6s_1(0)+4s_1(1)$\\
 $3$         & $0$\\
 $4$         & $0$\\
 $5$         & $0$\\
 $6$         & $s_1(0)+3s_1(1)$
 \end{tabular}
\vspace{0.2 cm}
\caption{Value of $t_1(r)\bmod 7$}\label{tablew1}
\end{center}
\end{table}

We can quickly verify by computer (or even by hand, were we so inclined) that the coefficients $s_{\beta}(m)$ of $L_1$ do indeed satisfy these relations.  For want of space, we verify this in our Mathematica supplement.

Notice, however, that we are not done.  It is not sufficient to verify that $L_1$ satisfies the relations that we have just given.  After all, there is no way of knowing whether $L_3$, $L_5$, or $L_{44847}$ are also so composed such that applying $U^{(1)}$ will cause all deviant monomials to cancel out.  To determine this, we construct what we call the \textit{congruence ideal sequence} associated with the congruence family.  From there, we determine \textit{ideal stability}.

Let us consider a function
\begin{align}
f_{\alpha}=\frac{1}{(1+7x)^n}\sum_{\substack{0\le\beta\le 1\\ m\ge 1-\beta}}s_{\alpha,\beta}(m)7^{\theta^{(1)}_{\beta}(m)}y^{\beta}x^m,\label{falpha}
\end{align} in which $\beta\in\{0,1\}$ as always, but in which $\alpha$ can be any positive integer index.  For our purposes, $f_{\alpha}$ will be associated with $L_{2\alpha-1}/7^{\alpha -1}$.  

It will be useful for us to define the associated vector
\begin{align}
\mathbf{s}_{\alpha} &:= \left( s_{\alpha,0}(1), s_{\alpha,0}(2), s_{\alpha,0}(3), s_{\alpha,0}(4), s_{\alpha,0}(5), s_{\alpha,1}(0), s_{\alpha,1}(1), s_{\alpha,1}(2), s_{\alpha,1}(3) \right).\label{salphavector}
\end{align}  Now we wish to encapsulate the relations in (\ref{idealel1}) and Tables \ref{tablew0} and \ref{tablew1}.  Notice that the relations in Tables \ref{tablew0}, \ref{tablew1} are all mod 7 relations, i.e., we can express them as polynomials in a finite field.  As such, we can define the associated generated set of functions which summarizes our relations in Tables \ref{tablew0} and \ref{tablew1}.  This gives us the following:
\begin{align*}
4X_2 + 6X_3 + 6X_6 + 6X_7,\\
4X_3 + 6X_6 + 6X_7,\\
4X_6 + 5X_7,
\end{align*} in which we use the indeterminate vector $\mathbf{X} = \left( X_1, X_2, X_3, X_4, X_5, X_6, X_7, X_8, X_9 \right)$ in place of $\mathbf{s}_{\alpha}$.  We cannot do as much with (\ref{idealel1}), since it is a mod $7^2$ relation.  Instead, we simply multiply our function elements through by 7 and work over a coefficient ring $\mathbb{Z}/49\mathbb{Z}$.  We then have the ideal
\begin{align*}
I(\mathbf{X}) := &\big( p_1(\mathbf{X}), p_2(\mathbf{X}), p_3(\mathbf{X}), p_4(\mathbf{X}) \big) \le\left(\mathbb{Z}/49\mathbb{Z}\right)[\mathbf{X}],
\end{align*}  in which the polynomials $p_k(\mathbf{X})$ are defined:
\begin{align*}
p_1(\mathbf{X}) &= 43X_1 + 48X_2 + 15X_3 + 42X_4 + 7X_5 + 8X_6 + 27X_7 + 7X_8 + 42X_9,\\
p_2(\mathbf{X}) &= 28X_2 + 42X_3 + 42X_6 + 42X_7,\\
p_3(\mathbf{X}) &= 28X_3 + 42X_6 + 42X_7,\\
p_4(\mathbf{X}) &= 28X_6 + 35X_7,
\end{align*}  By a slight abuse of notation, we let
\begin{align}
I(\alpha) := I\left(\mathbf{s}_{\alpha}\right) = &\big( p_1(\mathbf{s}_{\alpha}), p_2(\mathbf{s}_{\alpha}), p_3(\mathbf{s}_{\alpha}), p_4(\mathbf{s}_{\alpha}) \big) \le\left(\mathbb{Z}/49\mathbb{Z}\right)[\mathbf{s}_{\alpha}].\label{congruenceidealdefn}
\end{align}  

\begin{definition}
Let $f_{\alpha}\in\mathcal{V}^{(1)}_n$ as in (\ref{falpha}) and $n\equiv 3\pmod{7}$, and $\mathbf{s}_{\alpha}$ defined as in (\ref{salphavector}).  The ideal $I(\alpha)$ is the \textit{congruence ideal} associated with $f_{\alpha}$.
\end{definition}

\begin{lemma}\label{dkdddisshsi1}
Let $n\equiv 3\pmod{7}$, $f_{\alpha}\in\mathcal{V}^{(1)}_n$ as in (\ref{falpha}), and let $I(\alpha)$ be the associated congruence ideal.  If $I(\alpha)=(0)$, then
\begin{align*}
\frac{1}{7}U^{(1)}\left( f_{\alpha} \right)\in\mathcal{V}^{(0)}_{7n}.
\end{align*}
\end{lemma}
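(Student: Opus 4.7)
The plan is to directly compute $U^{(1)}(f_{\alpha})$ using Theorem \ref{thmrelAA} and linearity, isolate the subset of monomials that fail the divisibility bounds of Section \ref{section1to0part}, and show that the hypothesis $I(\alpha)=(0)$ is precisely the condition forcing those anomalous contributions to vanish modulo the required power of $7$.

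First, I would apply $U^{(1)}$ term-by-term to $f_{\alpha}$ to obtain
\begin{align*}
U^{(1)}(f_{\alpha}) = \frac{1}{(1+7x)^{7n}} \sum_{\substack{0\le\gamma\le 1\\ r\ge 1-\gamma}} t_{\gamma}(r)\, 7^{\theta^{(0)}_{\gamma}(r)} y^{\gamma} x^{r},
\end{align*}
with $t_{\gamma}(r)$ given by (\ref{tgammadefn}). The four subsections of Section \ref{section1to0part} establish that, except for the finite list of deviant pairs $(\beta,m,\gamma,r)$ tabulated there, each summand of $t_{\gamma}(r)$ already contributes a factor of $7^{\theta^{(0)}_{\gamma}(r)+1}$. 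Hence establishing $\tfrac{1}{7}U^{(1)}(f_{\alpha})\in\mathcal{V}^{(0)}_{7n}$ reduces to showing that the deviant contributions to $t_{0}(1)$ sum to a multiple of $49$ and that, for each remaining deviant pair $(\gamma,r)\neq(0,1)$, the deviant part of $t_{\gamma}(r)$ is a multiple of $7$.

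Next, I would use Theorem \ref{hcongred37} to reduce every coefficient $h^{(1)}_{\beta\gamma}(m,n,r)$ appearing in the deviant range --- all of which fall within the hypotheses (\ref{congcondh100})--(\ref{congcondh111}) --- to $h^{(1)}_{\beta\gamma}(m,3,r)$, legitimized by $n\equiv 3\pmod{7}$. After these substitutions, each deviant contribution becomes an explicit $\mathbb{Z}/49\mathbb{Z}$-linear form in the entries of $\mathbf{s}_{\alpha}$ with coefficients that can be calculated once and for all; indeed this is precisely the computation that produced Tables \ref{tablew0}, \ref{tablew1} and the relation (\ref{idealel1}). By inspection, the resulting forms coincide (up to units) with $p_{1}(\mathbf{s}_{\alpha}),\ldots,p_{4}(\mathbf{s}_{\alpha})$. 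Since $I(\alpha)=(0)$ in $(\mathbb{Z}/49\mathbb{Z})[\mathbf{s}_{\alpha}]$, every such form vanishes, delivering the missing factor of $7$ in each deviant $(\gamma,r)$ and yielding $\tfrac{1}{7}U^{(1)}(f_{\alpha})\in\mathcal{V}^{(0)}_{7n}$.

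The main obstacle is the bookkeeping: one must verify that the list of deviant pairs in Section \ref{section1to0part} is exactly the support described by $p_{1},\ldots,p_{4}$, and that no deviant term involves $s_{\alpha,\beta}(m)$ for $m$ outside the range in which Theorem \ref{hcongred37} applies. Fortunately, the hypotheses of that theorem were calibrated precisely to cover the deviant support, but the match is a finite explicit computation that would live in the Mathematica supplement. A secondary subtlety, worth flagging, is that the relation for $t_{0}(1)$ must be taken modulo $49$ rather than modulo $7$: this requires $h^{(1)}_{\beta\gamma}(m,n,1)$ to be stable mod $49$ as $n$ varies within its residue class, which is exactly what Theorem \ref{hcongred37} supplies and why the congruence ideal is defined over $\mathbb{Z}/49\mathbb{Z}$ rather than $\mathbb{Z}/7\mathbb{Z}$.
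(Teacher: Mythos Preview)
Your outline tracks the paper's proof closely: isolate the deviant contributions via Section \ref{section1to0part}, invoke Theorem \ref{hcongred37} to replace $h^{(1)}_{\beta\gamma}(m,n,r)$ by $h^{(1)}_{\beta\gamma}(m,3,r)$, and then argue that the resulting linear forms in $\mathbf{s}_{\alpha}$ vanish under $I(\alpha)=(0)$. That is exactly the skeleton the paper uses.

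There is, however, one genuine inaccuracy. You assert that the deviant forms ``coincide (up to units) with $p_{1}(\mathbf{s}_{\alpha}),\ldots,p_{4}(\mathbf{s}_{\alpha})$.'' This is false: Tables \ref{tablew0} and \ref{tablew1} together contain nine nonzero mod-$7$ relations in addition to the single mod-$49$ relation (\ref{idealel1}), whereas $I(\alpha)$ has only four generators. Most of these relations are not unit multiples of any $p_{i}$. For example, the $r=2$ entry of Table \ref{tablew0}, namely $3s_{\alpha,0}(1)+2s_{\alpha,0}(2)+2s_{\alpha,0}(3)+3s_{\alpha,1}(0)+4s_{\alpha,1}(1)$, involves $s_{\alpha,0}(1)$, which occurs only in $p_{1}$; yet this form is certainly not a unit multiple of $p_{1}$. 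What the paper actually does is multiply each mod-$7$ relation by $7$ and exhibit it as a $\mathbb{Z}/49\mathbb{Z}$-linear combination of the generators $p_{1},\ldots,p_{4}$ (for the $r=2$ case one finds $86\cdot 7\cdot(\text{form})\equiv 42p_{1}-29p_{2}+64p_{3}\pmod{49}$). This ideal-membership verification---not identification up to units---is the actual content of the lemma, and it is the computation deferred to the supplement. Your proposal would be correct once ``coincide (up to units) with'' is replaced by ``lie in the ideal generated by,'' with the understanding that this membership must be checked relation by relation.
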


\begin{proof}
Of course, (\ref{idealel1}) is the very first relation of $I(\alpha)$.  The relations in Tables \ref{tablew0}, \ref{tablew1} are taken mod 7.  We therefore multiply each of these relations by 7, and show that the results are members of $I(\alpha)$.  If we consider the first relation of Table \ref{tablew0}, for example, we get
\begin{align*}
86(7)\left( 3s_0(1)+2s_0(2)+2s_0(3)+3s_1(0)+4s_1(1) \right) =& 42p_1(\mathbf{s}_{\alpha})-29p_2(\mathbf{s}_{\alpha})+64p_3(\mathbf{s}_{\alpha})-1764 s_0(4)\\ &- 294 s_0(5) - 196 s_1(1) - 294 s_1(2) - 1764 s_1(3)\\
\equiv& 42p_1(\mathbf{s}_{\alpha})-29p_2(\mathbf{s}_{\alpha})+64p_3(\mathbf{s}_{\alpha}) \pmod{49}.
\end{align*}  Notice that 86 is coprime with 7.  Therefore, 

\begin{align*}
7\left( 3s_0(1)+2s_0(2)+2s_0(3)+3s_1(0)+4s_1(1) \right)\in I(\alpha),
\end{align*} and will vanish if $I(\alpha)=(0).$  We verify membership of each 7-multiple of the elements of Tables \ref{tablew0}, \ref{tablew1} in our Mathematica supplement.
\end{proof}

We see, then, that the congruence ideal contains the information related to the complex interactions between the basis functions that make up $f_{\alpha}$.  On applying $U^{(1)}$ to a typical element of $\mathcal{V}^{(1)}_n$, as we have seen, we do not necessarily gain a 7-multiple of an element in $\mathcal{V}^{(0)}_{7n}$.  However, if the coefficients of $f_{\alpha}$ are so chosen that $I(\alpha)=(0)$, then we \textit{will} gain such a function.  In particular, as we checked in (\ref{checkingL1forfirst}) and our Mathematica supplement, we have
\begin{align}
I(1)=(0).\label{L1congidealis0}
\end{align}
But, as we already noted, this is not enough.  What we need to determine is whether the interactions encoded in $I(\alpha)$ are also present in the successor $f_{\alpha+1}$ to $f_{\alpha}$.  We can of course construct the ideal $I(\alpha+1)$ analogous to $I(\alpha)$, and we want to determine the relationship that the former bears with respect to the latter.  If we can prove that $I(\alpha)=(0)$ implies $I(\alpha+1)=(0)$, then the structure of the coefficients of $f_{\alpha}$ which ensures that a power of 7 will be gained after applying $U^{(0)}\circ U^{(1)}$ will also be present with $f_{\alpha+1}$.

\begin{lemma}\label{stablethmlem}
Let $n\equiv 3\pmod{7}$, $f_{\alpha}\in\mathcal{V}^{(1)}_n$ as in (\ref{falpha}) with congruence ideal $I(\alpha)$, and let
\begin{align*}
f_{\alpha+1} =& \frac{1}{7} U^{(0)}\circ U^{(1)}\left( f_{\alpha} \right) = \frac{1}{(1+7x)^{49n+3}}\sum_{\substack{0\le\delta\le 1\\ w\ge 1-\delta}}s_{\alpha+1,\delta}(m)7^{\theta^{(1)}_{\delta}(w)}y^{\delta}x^w.
\end{align*} Define $\mathbf{s}_{\alpha+1}$ in a manner analogous to that of $\mathbf{s}_{\alpha}$, with congruence ideal $I(\alpha+1)$.  Then we have
\begin{align*}
I(\alpha+1)\subseteq I(\alpha).
\end{align*}
\end{lemma}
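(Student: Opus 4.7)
The plan is to express each generator $p_j(\mathbf{s}_{\alpha+1})$ of $I(\alpha+1)$ explicitly as a $\mathbb{Z}/49\mathbb{Z}$-linear combination of the four generators $p_i(\mathbf{s}_\alpha)$ of $I(\alpha)$. I would begin by applying Theorem \ref{thmrelAA} twice to $f_\alpha$, expanding $U^{(0)} \circ U^{(1)}(f_\alpha)$ as an iterated sum over indices $(\beta, m) \to (\gamma, r) \to (\delta, w)$. Collecting the coefficient of $y^\delta x^w / (1+7x)^{49n+3}$ and dividing by $7$ yields each of the nine components $s_{\alpha+1, \delta}(w)$ of $\mathbf{s}_{\alpha+1}$ as an explicit $\mathbb{Z}[1/7]$-linear combination of the coefficient sequence of $f_\alpha$, with coefficients built from products $h^{(1)}_{\beta\gamma}(m, n, r)\, h^{(0)}_{\gamma\delta}(r, 7n, w)$ and from expanding the intervening $(1+7x)^{\pm k}$ factors.

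Reducing modulo $49$ simplifies this in two ways. First, the slope inequalities of Sections \ref{section0to1full}--\ref{section1to0part}, composed across the two $U$-operator steps, guarantee that the accumulated exponent of $7$ in each contribution grows with $m$, so only finitely many $(\beta, m)$ survive modulo $49$. Second, by Theorem \ref{hcongred37}, the relevant $h^{(1)}$-values depend on $n$ only through $n \bmod 7 = 3$, and the $h^{(0)}$-values on $7n \bmod 7 = 0$; both are therefore independent of $\alpha$ in the ranges that matter. Consequently the map sending the coefficient sequence of $f_\alpha$ to $\mathbf{s}_{\alpha+1} \bmod 49$ is a fixed $\mathbb{Z}/49\mathbb{Z}$-linear map which can be tabulated once and for all by direct evaluation of the $h$-arrays.

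The final step is, for each $j = 1, 2, 3, 4$, to substitute the explicit expressions for the components of $\mathbf{s}_{\alpha+1}$ into $p_j$, producing a linear form in the coefficients of $f_\alpha$, and to verify in the Mathematica supplement that this form can be written modulo $49$ as $\sum_i c_{j,i}\, p_i(\mathbf{s}_\alpha)$ for explicit constants $c_{j,i} \in \mathbb{Z}/49\mathbb{Z}$. The main obstacle is that this membership is not forced by any general structural principle: the four generators $p_1, \ldots, p_4$ were extracted from the specific deviant indices of Section \ref{section1to0part}, and the stability of those four particular relations under the iteration $\mathbf{s}_\alpha \mapsto \mathbf{s}_{\alpha+1}$ is a genuine arithmetic coincidence of the $h$-arrays. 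Two things must be checked by direct computation: the coefficients of $s_{\alpha, \beta}(m)$ for $(\beta, m)$ outside the nine $\mathbf{s}_\alpha$-indices must vanish modulo $49$ in each $p_j(\mathbf{s}_{\alpha+1})$, and the remaining linear form on the nine standard variables must lie in the ideal $(p_1(\mathbf{s}_\alpha), \ldots, p_4(\mathbf{s}_\alpha))$ of $\mathbb{Z}/49\mathbb{Z}$. The principal technical burden is the sheer size of the intermediate expressions generated by two compositions of $U$-operators; organizing the calculation so that the $\bmod 49$ reduction can be executed in a reasonable amount of time is essential, and demands computer algebra.
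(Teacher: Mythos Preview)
Your proposal is correct and follows essentially the same route as the paper: compose the two applications of Theorem \ref{thmrelAA}, use the $\pi$/$\theta$ inequalities to cut the sums down to a finite range in $m$ and $r$ modulo $49$, invoke Theorem \ref{hcongred37} (with $n\equiv 3$ for the $h^{(1)}$-values and $7n\equiv 0$ for the $h^{(0)}$-values) to make the transition matrix independent of $\alpha$, and then verify by computer that each $p_j(\mathbf{s}_{\alpha+1})$ lies in $I(\alpha)$. The one superfluous remark is your mention of ``expanding the intervening $(1+7x)^{\pm k}$ factors'': the denominators in Theorem \ref{thmrelAA} are set up so that they compose exactly, $(1+7x)^n\mapsto(1+7x)^{7n}\mapsto(1+7x)^{49n+3}$, and no such expansion is needed.
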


This critical lemma allows us to complete the proof of Theorem \ref{Main}, and with it Theorem \ref{Thm12}.

\begin{proof}

Recalling (\ref{tgammadefn}), adjusted with the notation of (\ref{falpha}), we have

\begin{align*}
t_{\gamma}(r) = \sum_{\substack{0\le\beta\le 1\\ m\ge 1-\beta}} s_{\alpha,\beta}(m)h^{(1)}_{\beta\gamma}(m,n,r)7^{\pi^{(1)}_{\beta\gamma}(m,r)+\theta^{(1)}_{\beta}(m) - \theta^{(0)}_{\gamma}(r) -1}.
\end{align*}  We take (\ref{u23fsad}), replacing $f$ with $U^{(1)}(f_{\alpha})$.  By Theorem \ref{goingv0tov1}, we have
\begin{align*}
s_{\alpha+1,\delta}(w) =& \sum_{\substack{0\le\gamma\le 1\\ r\ge 1-\gamma}}\left(\sum_{\substack{0\le\beta\le 1\\ m\ge 1-\beta}} s_{\alpha,\beta}(m)h^{(1)}_{\beta\gamma}(m,n,r)7^{\pi^{(1)}_{\beta\gamma}(m,r)+\theta^{(1)}_{\beta}(m) - \theta^{(0)}_{\gamma}(r) -1}\right)\\ &\times h^{(0)}_{\gamma\delta}(r,7n,w) 7^{\pi^{(0)}_{\gamma\delta}(r,w) + \theta^{(0)}_{\gamma}(r) -\theta^{(1)}_{\delta}(w)}\\
=& \sum_{\substack{0\le\gamma\le 1\\ r\ge 1-\gamma}}t_{\gamma}(r) h^{(0)}_{\gamma\delta}(r,7n,w) 7^{\pi^{(0)}_{\gamma\delta}(r,w) + \theta^{(0)}_{\gamma}(r) -\theta^{(1)}_{\delta}(w)}.
\end{align*}  Notice that for $t_{\gamma}(r)\in\mathbb{Z}$ we must have $s_{\alpha+1,\delta}(w)\in\mathbb{Z}$.  

We want to examine, say, $p_1(\mathbf{s}_{\alpha+1})$ modulo 49.  Notice that $0\le m\le 5$ for all of our deviant cases; for $m\ge 6$, we have 
\begin{align*}
\pi^{(1)}_{\beta\gamma}(m,r)+\theta^{(1)}_{\beta}(m) - \theta^{(0)}_{\gamma}(r)\ge 1.
\end{align*}   Moreover, for $m\ge 7$, we have

\begin{align*}
\pi^{(1)}_{\beta\gamma}(m,r)+\theta^{(1)}_{\beta}(m) - \theta^{(0)}_{\gamma}(r)\ge 2.
\end{align*}  Similarly, for $r\ge 7$, we have
\begin{align*}
\pi^{(0)}_{\gamma\delta}(r,w)+\theta^{(0)}_{\gamma}(r) - \theta^{(1)}_{\delta}(w)\ge 2.
\end{align*}  As such, to properly study $p_1(\mathbf{s}_{\alpha+1})$ modulo 49, we need only take $0\le m\le 6$, $0\le r\le 6$.

We compute this explicitly in our Mathematica supplement.  Notably, by using Theorem \ref{hcongred37}, we can reduce our auxiliary function $h^{(0)}_{\beta\gamma}(r,7n,w)$ to $h^{(0)}_{\beta\gamma}(r,7,w)$.  Doing so gives us the following congruence:
\begin{align*}
p_1(\mathbf{s}_{\alpha+1}) \equiv& 25 h^{(1)}_{00}(1,n,1) s_{\alpha,0}(1) + 7 h^{(1)}_{00}(1,n,2) s_{\alpha,0}(1) + 35 h^{(1)}_{01}(1,n,0) s_{\alpha,0}(1) + 25 h^{(1)}_{00}(2,n,1) s_{\alpha,0}(2)\\
 &+ 7 h^{(1)}_{00}(2,n,2) s_{\alpha,0}(2) + 35 h^{(1)}_{01}(2,n,0) s_{\alpha,0}(2) + 
 25 h^{(1)}_{00}(3,n,1) s_{\alpha,0}(3) + 7 h^{(1)}_{00}(3,n,2) s_{\alpha,0}(3)\\ 
&+ 35 h^{(1)}_{01}(3,n,0) s_{\alpha,0}(3) + 28 h^{(1)}_{00}(4,n,1) s_{\alpha,0}(4) + 
 28 h^{(1)}_{00}(5,n,1) s_{\alpha,0}(5) + 25 h^{(1)}_{10}(0,n,1) s_{\alpha,1}(0)\\
&+ 7 h^{(1)}_{10}(0,n,2) s_{\alpha,1}(0) + 35 h^{(1)}_{11}(0,n,0) s_{\alpha,1}(0) + 
 25 h^{(1)}_{10}(1,n,1) s_{\alpha,1}(1) + 7 h^{(1)}_{10}(1,n,2) s_{\alpha,1}(1)\\
&+ 35 h^{(1)}_{11}(1,n,0) s_{\alpha,1}(1) + 28 h^{(1)}_{10}(2,n,1) s_{\alpha,1}(2) + 
 28 h^{(1)}_{10}(3,n,1) s_{\alpha,1}(3)\pmod{49}.
\end{align*}  Notice that the remaining non-vanishing coefficients are also subject to Theorem \ref{hcongred37}.  We therefore reduce $h^{(1)}_{\beta\gamma}(m,n,r)$ to $h^{(1)}_{\beta\gamma}(m,3,r)$ mod 49, and we have
\begin{align*}
p_1(\mathbf{s}_{\alpha+1}) \equiv& 18 s_{\alpha,0}(1) + 38 s_{\alpha,0}(2) + 32 s_{\alpha,0}(3) + 21 s_{\alpha,0}(4) + 28 s_{\alpha,0}(5) + 4 s_{\alpha,1}(0) + 45 s_{\alpha,1}(1) + 28 s_{\alpha,1}(2) + 21 s_{\alpha,1}(3)\\
\equiv& 46\left( 43 s_{\alpha,0}(1) + 20 s_{\alpha,0}(2) + 22 s_{\alpha,0}(3) + 42 s_{\alpha,0}(4) + 7 s_{\alpha,0}(5) + 15 s_{\alpha,1}(0) + 34 s_{\alpha,1}(1) + 7 s_{\alpha,1}(2) + 42 s_{\alpha,1}(3) \right)\\
\equiv&46\cdot\left( p_1(\mathbf{s}_{\alpha}) - p_2(\mathbf{s}_{\alpha}) \right)\pmod{49}.
\end{align*}  That is,
\begin{align*}
p_1(\mathbf{s}_{\alpha+1})&\in I(\alpha).
\end{align*}  We similarly determine in our Mathematica supplement that
\begin{align*}
p_2(\mathbf{s}_{\alpha+1}), p_3(\mathbf{s}_{\alpha+1}), p_4(\mathbf{s}_{\alpha+1})&\in I(\alpha).
\end{align*}  We therefore have
\begin{align*}
(0)\subseteq I(\alpha+1)\subseteq I(\alpha).
\end{align*}
\end{proof}

This gives us enough to prove Theorem \ref{Main}.

\subsection{Proof of Theorem \ref{Main} (I): Demonstrating Ideal Stability}

Suppose that for some $\alpha\ge 1$ we have
\begin{align*}
\frac{1}{7^{\alpha-1}}L_{2\alpha-1} = f_{\alpha}\in\mathcal{V}^{(1)}_{n},
\end{align*} with $n\equiv 3\pmod{7}$ and $f_{\alpha}$ defined as in (\ref{falpha}).  In particular, by hypothesis,
\begin{align*}
L_{2\alpha-1}\equiv 0\pmod{7^{\alpha-1}}.
\end{align*}  We define the associated congruence ideal $I(\alpha)$ as in (\ref{congruenceidealdefn}).  Suppose that
\begin{align}
I(\alpha)=(0).\label{inproofieq0}
\end{align}  Then by Lemma \ref{dkdddisshsi1} we have 
\begin{align*}
\frac{1}{7^{\alpha}}L_{2\alpha} = \frac{1}{7^{\alpha}}U^{(1)}\left( L_{2\alpha-1} \right) = \frac{1}{7}U^{(1)}\left( f_{\alpha} \right)\in\mathcal{V}^{(0)}_{7n}.
\end{align*}  Thus, $L_{2\alpha}$ is divisible by $7^{\alpha}$.  Moreover, by Lemma \ref{stablethmlem},
\begin{align*}
\frac{1}{7^{\alpha}}L_{2\alpha+1} = \frac{1}{7^{\alpha}}U^{(0)}\circ U^{(1)}\left( L_{2\alpha-1} \right) = \frac{1}{7}U^{(0)}\circ U^{(1)}\left( f_{\alpha} \right) = f_{\alpha+1}\in\mathcal{V}^{(1)}_{49n+3},
\end{align*} and by (\ref{inproofieq0})
\begin{align*}
(0)\subseteq I(\alpha+1)\subseteq I(\alpha)=(0),
\end{align*} whence $I(\alpha+1)=(0)$.  To complete the induction, we note from (\ref{L1expressxy}) that $L_{1}\in\mathcal{V}^{(1)}_{3}$, and by (\ref{L1congidealis0}) we have $I(1)=(0)$.

\subsection{Proof of Theorem \ref{Main} (II): Computing the Localizing Factor}

Checking the values of $\psi(\alpha)$ is a straightforward exercise in elementary number theory.  If for some $\alpha\ge 1$ we have
\begin{align*}
\psi(2\alpha-1) = \left\lfloor \frac{7^{2\alpha}}{16} \right\rfloor,
\end{align*} as in (\ref{psidefn}), then we know from Theorem \ref{thmrelAA} that on applying $U^{(1)}$ to $L_{2\alpha-1}$, our resultant denominator will be $1+7x$ raised to the power of
\begin{align*}
\psi(2\alpha) = 7\psi(2\alpha-1) &= 7\left\lfloor \frac{7^{2\alpha}}{16} \right\rfloor = 7\left( \frac{7^{2\alpha}-1}{16} \right) = \frac{7^{2\alpha+1}-7}{16} = \left\lfloor \frac{7^{2\alpha+1}}{16} \right\rfloor.
\end{align*}  We may similarly verify that
\begin{align*}
\psi(2\alpha+1) = 7\psi(2\alpha)+3 = \left\lfloor \frac{7^{2\alpha+2}}{16} \right\rfloor.
\end{align*}  We note that
\begin{align*}
\psi(1) = \left\lfloor \frac{7^{2}}{16} \right\rfloor = 3,
\end{align*} which matches the localizing factor we have for $L_1$ in (\ref{L1expressxy}).

\subsection{Proof of Theorem \ref{Main} (III): Accounting for the Anomalous Term $r_L$}

Finally, we note the occurence of the term $r_L$.  Notice that for a given element of $\mathcal{V}^{(1)}$, the power of 7 attached to $x^m$ for $1\le m\le 3$ is $-1$.  This is also true for $y, yx$.  If we take 
\begin{align*}
&s_0(1)x+s_0(2)x^2+s_0(3)x^3+s_1(0)y+s_1(1)xy\\
&\equiv s_0(1)\left(x+s_0(2)s_0(1)^{-1}x^2+s_0(3)s_0(1)^{-1}x^3+s_1(0)s_0(1)^{-1}y+s_1(1)s_0(1)^{-1}xy \right)\pmod{7},
\end{align*} we should expect
\begin{align*}
s_0(2)&\equiv 3s_0(1)\pmod{7},\\
s_0(3)&\equiv s_0(1)\pmod{7},\\
s_1(0)&\equiv 6s_0(1)\pmod{7},\\
s_1(1)&\equiv 5s_0(1)\pmod{7};
\end{align*} or, equivalently,
\begin{align*}
7(s_0(2)- 3s_0(1))&\equiv 0\pmod{49},\\
7(s_0(3)- s_0(1))&\equiv 0\pmod{49},\\
7(s_1(0)- 6s_0(1))&\equiv 0\pmod{49},\\
7(s_1(1)- 5s_0(1))&\equiv 0\pmod{49}.
\end{align*}  Indeed, as we verify in our Mathematica supplement, we have
\begin{align*}
7(s_0(2) - 3s_0(1)), 7(s_0(3) - s_0(1)), 7(s_1(0) - 6s_0(1)), 7(s_1(1) - 5s_1(1))\in I(\alpha).
\end{align*}  Therefore, for $I(\alpha)=(0)$, we have
\begin{align*}
\frac{1}{7}(s_0(1)x+s_0(2)x^2+s_0(3)x^3+s_1(0)y+s_1(1)xy) = s_0(1)r_L + M_0(x)+yM_1(x),
\end{align*} for some $M_0(x), M_1(x)\in\mathbb{Z}[x].$ \begin{flushright}\qedsymbol\end{flushright}

\subsection{Proof of Theorem \ref{Thm12}}

Recall that by Theorem \ref{Main}, we have
\begin{align*}
\frac{(1+7x)^{\psi}}{7^{\beta}}\cdot L_{\alpha} + k_{\alpha}\cdot r_L \in\mathbb{Z}[x]\oplus y\mathbb{Z}[x],
\end{align*} in which $k_{\alpha}\in\mathbb{Z}$ for all $\alpha\ge 1$.  Moreover, by (\ref{rLisint}), $r_L$ is a power series with integral coefficients, thus implying that $\frac{(1+7x)^{\psi}}{7^{\beta}}\cdot L_{\alpha}$ is an integral power series.  Since $1+7x$ is not divisible by 7, we must have
\begin{align*}
L_{\alpha}\equiv 0\pmod {7^{\beta}}.
\end{align*}
\begin{flushright}\qedsymbol\end{flushright}

\section{Further Work}\label{sectionfurther}

Despite the tediousness of the proof, the underlying elegance, especially of the last step, is astonishing.  We see that each generating function $L_{\alpha}$ is composed of a combination of basis functions $y^{\beta}x^m$ in just the right way so that any deviant terms will disappear in passing to its successor... and that this composition is itself very precisely inherited by its successor.  We do not yet have a comprehensive understanding of these deviant terms; nevertheless, they may play a substantial role in determining why congruence families exist at all, especially over curves of nontrivial topology.

This sort of cancellation property was not necessary in classical congruence families, e.g., those of Ramanujan \cite{Ramanujan}, but we believe that it may become a more important component of future proofs, especially of the more difficult families.

We finish now by considering the following additional questions which are raised by this proof:

\subsection{Congruence Families of Higher Genus and Cusp Count}

Our congruence family from Theorem \ref{Thm12} was associated with a modular curve of genus 1 and cusp count 4.  The natural question that arises from our work is whether our methods can be used to prove a given congruence family when either the genus or the cusp count of the associated curve $\mathrm{X}$ is increased.

It is important to note that in the case of the main result of this paper, the genus is a complicating factor, but it does not appear to fundamentally alter the accessibility of the proof.  Our methods are in large measure identical to those used in previous applications of localization to prove congruences, except for a greater degree of necessary calculations, together with a bit of ``index gymnastics."  As such, any congruence families in which $\mathrm{X}$ has higher genus ought to be accessible to proof, provided that the cusp count of $\mathrm{X}$ remains 4 or less.  We are currently investigating possible congruence families whose curves carry such a topology.

A more ambitious problem is for us to adapt our methods to encompass congruence families in which $\mathrm{X}$ has a cusp count greater than 4.  These are among the most difficult and the least-understood congruence families.  The proofs of these families---when we have them at all---are not generally easy to understand, and often involve algebraic structures or functions which are not clearly understood.  

As an example, we consider the Andrews--Sellers congruence family.  Its associated modular curve, $\mathrm{X}_0(20)$, has genus 1 and cusp count 6.  The family was proposed in 1994, and concerned the generalized 2-color Frobenius function $c\phi_2(n)$.  It resisted proof until Paule and Radu's work in 2012.  The techniques developed by Paule and Radu have been extremely important and applicable.  On the other hand, their proof involves certain algebraic structures, e.g., a pair of rank 2 $\mathbb{Z}[X]$ modules, which do not arise from a clear theoretical understanding.

The situation is analogous to the proofs of the irrationality of $\sqrt{2}$.  The classical ``Pythagorean" proof is especially elegant, but it does not highlight \textit{why} $\sqrt{2}$ is irrational in the same way as a proof which utilizes the property of unique factorization in $\mathbb{Z}$.  The latter proof relates irrationality to a much deeper and more general property of $\mathbb{Z}$, and immediately allows us to generalize to a much broader class of irrational numbers.

The appeal of localization is that our proof not only provides an algebraic framework which arises naturally from the topology of the underlying Riemann surface (and which easily reduces to the classical case for simple topologies), but it also allows us to highlight what makes some congruences more difficult to prove than others.  The ideal structure that plays such a prominent role in our proof in this article emphasizes the complex interaction and inheritance between basis functions which might not be noticeable otherwise.

\subsection{Ideal Vs. Kernel Approaches}

This article constitutes the second collaboration between Banerjee and Smoot to prove a congruence family using the localization method.  Our first result \cite{Banerjee} concerned $d_5(n)$ modulo powers of 5.  In that paper we used an approach which is equivalent but which relies on the properties of certain vector spaces.  Instead of working with a sequence of ideals, we build a vector space, and work with a linear operator $\Omega$ on that vector space.  For every odd-indexed $L_{2\alpha-1}$ we have a certain vector $v_{\alpha}$.  Proving the congruence family amounts to proving that 
\begin{align*}
v_{\alpha}\in\mathrm{ker}\left( \Omega \right)\text{ implies } v_{\alpha+1}\in\mathrm{ker}\left( \Omega \right).
\end{align*}  The approach is equivalent to our approach in this article, except that in our current case we would be forced to work over a $\mathbb{Z}/49\mathbb{Z}$-module, rather than a vector space.

Part of the reason that we used a different formulation is that we recognize limitations to the current localization method.  It cannot currently be used to prove congruences associated with a modular curve of cusp count 6.  We hope to modify the method to properly account for the most difficult congruence families.

With this goal in mind, our two papers show that one can consider modifications to the method either from the perspective of linear operators on certain modules and vector spaces, or from the perspective of ideal chains associated with a given polynomial ring.  One of these approaches may ultimately prove more useful to the task than the other.

\pagebreak
\section{Appendix}

\begin{align*}
\theta^{(1)}_{0}(m) &=\begin{cases} -1, & 1\le m\le 3,\\ \left\lfloor \frac{7m-28}{9} \right\rfloor & m\ge 4\end{cases}\\
\theta^{(1)}_{1}(m) &=\begin{cases} -1, & 0\le m\le 1,\\ \left\lfloor \frac{7m-14}{9} \right\rfloor & m\ge 2\end{cases}\\
\theta^{(0)}_{0}(m) &=\begin{cases} 0, & 1\le m\le 2,\\ \left\lfloor \frac{7m-16}{9} \right\rfloor & m\ge 3\end{cases}\\
\theta^{(0)}_{1}(m) &=\begin{cases} 0 & 0\le m\le 1,\\ \left\lfloor \frac{7m-5}{9} \right\rfloor & m\ge 2\end{cases}
\end{align*}

\begin{align*}
\pi^{(1)}_{00}(m,r) &=\begin{cases}
\left\lfloor \frac{7r+2}{9} \right\rfloor & 1\le m\le 3\text{ and } r\ge 9,\\
\max\left(0,\left\lfloor \frac{7r-m}{9} \right\rfloor\right)& \text{ otherwise}\end{cases}\\
\pi^{(1)}_{01}(m,r) &=\max\left(0,\left\lfloor \frac{7r-m+16}{9} \right\rfloor\right)\\
\pi^{(1)}_{10}(m,r) &=\begin{cases}
\left\lfloor \frac{7r+2}{9} \right\rfloor & 0\le m\le 1\text{ and } r\ge 11,\\
\left\lfloor \frac{7r-3}{9} \right\rfloor & m=0\text{ and } 1\le r\le 10,\\
\max\left(0,\left\lfloor \frac{7r-m-2}{9} \right\rfloor\right)& \text{ otherwise}
\end{cases}\\
\pi^{(1)}_{11}(m,r) &=\begin{cases}
\left\lfloor \frac{7r+4}{9} \right\rfloor + 1 & 0\le m\le 1\text{ and } r\ge 7,\\
\left\lfloor \frac{7r+2}{9} \right\rfloor + 1 & m=0,\\
\max\left(0,\left\lfloor \frac{7r-m+3}{9} \right\rfloor + 1\right)& \text{ otherwise}
\end{cases}\\
\pi^{(0)}_{00}(m,r) &=\begin{cases} -1 & 1\le r\le 3,\\ \max\left(0,\left\lfloor \frac{7r-m+4}{9} \right\rfloor - 3\right) & r\ge 4 \end{cases}\\
\pi^{(0)}_{01}(m,r) &=\begin{cases} -1 & 0\le r\le 1,\\ \max\left(0,\left\lfloor \frac{7r-m}{9} \right\rfloor - 1\right) & r\ge 4 \end{cases}\\
\pi^{(0)}_{10}(m,r) &=\begin{cases}
 -1 & 1\le r\le 3,\\ 
\max\left(0,\left\lfloor \frac{7r-1}{9} \right\rfloor - 3\right) & m=0, r\ge 4 \\
\max\left(0,\left\lfloor \frac{7r-m}{9} \right\rfloor - 3\right) & r\ge 4
 \end{cases}\\
\pi^{(0)}_{11}(m,r) &=\begin{cases}
 -1 & 0\le r\le 1,\\
\max\left(0,\left\lfloor \frac{7r-m+4}{9} \right\rfloor-2\right) & m=0, r\ge 2\\
\max\left(0,\left\lfloor \frac{7r-m+5}{9} \right\rfloor-2\right) & r\ge 2
\end{cases}
\end{align*}

\begin{align*}
b_0(z) &= z^{14}\\
b_1(z) 
&= -2 z^7 - 35 z^8 - 84 z^9 - 133 z^{10} + 161 z^{11} + 112 z^{12} + 63 z^{13} - 96 z^{14}\\
b_2(z) 
&= 1 - 14 z + 35 z^2 + 84 z^3 + 476 z^4 - 1876 z^5 - 4228 z^6 - 8859 z^7 + 9940 z^8\\ 
&+ 9254 z^9 + 5138 z^{10} + 1022 z^{11} - 5152 z^{12} - 5152 z^{13} + 4224 z^{14}\\
b_3(z)
&= -12 + 203 z - 826 z^2 - 826 z^3 + 25242 z^4 - 111958 z^5 - 308154 z^6\\ 
&- 1027606 z^7 + 944559 z^8 + 345632 z^9 + 203924 z^{10} - 218015 z^{11} + 67312 z^{12}\\ 
&+ 190400 z^{13} - 112640 z^{14}\\
b_4(z)
&= 66 - 1260 z + 7364 z^2 - 6650 z^3 + 136430 z^4 - 345436 z^5 - 4566688 z^6\\
&- 38513350 z^7 + 41646514 z^8 + 4955069 z^9 - 5994226 z^{10} + 4396224 z^{11}\\
&+ 683200 z^{12} - 4193280 z^{13} + 2027520 z^{14}\\
b_5(z) 
&= -220 + 4515 z - 32480 z^2 + 99967 z^3 - 24493 z^4 + 7498575 z^5 - 5736031 z^6\\ 
&- 565652279 z^7 + 610835141 z^8 - 53989607 z^9 + 34022968 z^{10} - 30421440 z^{11}\\ 
&- 33660928 z^{12} + 61071360 z^{13} - 25952256 z^{14}\\
b_6(z) 
&= 495 - 10500 z + 82306 z^2 - 354480 z^3 + 669914 z^4 + 23325603 z^5 + 256881359 z^6\\
&- 3911740166 z^7 + 4057736697 z^8 - 673766856 z^9 + 182358400 z^{10} - 54240256 z^{11}\\ 
&+ 501760000 z^{12} - 616562688 z^{13} + 242221056 z^{14}\\
b_7(z)
&= -792 + 16758 z - 127988 z^2 + 493185 z^3 - 3807692 z^4 + 41690572 z^5 + 1462044654 z^6\\ 
&- 14498962786 z^7 + 11696357232 z^8 + 2668196608 z^9 - 1949538304 z^{10}\\ 
&+ 2020085760 z^{11} - 4193910784 z^{12} + 4393009152 z^{13} - 1660944384 z^{14}\\
b_8(z)
&=924 - 18816 z + 122500 z^2 - 105938 z^3 + 2849350 z^4 - 84220857 z^5 + 4057736697 z^6\\ 
&- 31293921328 z^7 + 16440406976 z^8 + 11942708736 z^9 + 2743967744 z^{10}\\ 
&- 11615600640 z^{11} + 21576024064 z^{12} - 22020096000 z^{13} + 8304721920 z^{14} \\
b_9(z)
&=-792 + 14910 z - 65744 z^2 - 475335 z^3 + 4252871 z^4 - 53989607 z^5 + 4886681128 z^6\\ 
&- 36201745856 z^7 - 2936847872 z^8 + 30714163200 z^9 - 802586624 z^{10} + 26205749248 z^{11}\\ 
&- 68115496960 z^{12} + 75749130240 z^{13} - 29527900160 z^{14}\\
b_{10}(z)
&=495 - 8190 z + 10675 z^2 + 549528 z^3 - 5994226 z^4 + 39640552 z^5 + 2665376896 z^6\\ 
&- 19718835200 z^7 - 18705154048 z^8 - 11319246848 z^9 + 35764305920 z^{10}\\ 
&- 13946060800 z^{11} + 123547418624 z^{12} - 169114337280 z^{13} + 70866960384 z^{14}\\
b_{11}(z)
&=-220 + 2975 z + 8414 z^2 - 218015 z^3 + 1631392 z^4 + 22120448 z^5 + 483614208 z^6\\ 
&- 4209074176 z^7 - 10097590272 z^8 - 29349117952 z^9 + 52936310784 z^{10} - 13857980416 z^{11}\\ 
&- 110863843328 z^{12} + 217969590272 z^{13} - 103079215104 z^{14}\\
b_{12}(z)
&=66 - 644 z - 5152 z^2 + 8176 z^3 + 328832 z^4 + 4738048 z^5 + 40714240 z^6 - 290291712 z^7\\
&- 1108344832 z^8 - 3934257152 z^9 + 7985954816 z^{10} + 11274289152 z^{11} + 37580963840 z^{12}\\ 
&- 120259084288 z^{13} + 68719476736 z^{14}\\
b_{13}(z)
&=-12 + 63 z + 896 z^2 + 10304 z^3 - 68096 z^4 - 344064 z^5 - 1146880 z^6 - 524288 z^7\\
b_{14}(z) &= 1.
\end{align*}

\begin{align*}
a_0(z)
&= 3 x + 175 x^2 + 4606 x^3 + 73059 x^4 + 794731 x^5 + 6487502 x^6 + 42824236 x^7 + 238945119 x^8\\
&+ 1117547851 x^9 + 4162186322 x^{10} + 11541131602 x^{11} + 22033069422 x^{12} + 25705247659 x^{13}\\
&+ 13841287201 x^{14}\\
a_1(z) 
&= 203 x + 12005 x^2 + 322077 x^3 + 5251673 x^4 + 59434354 x^5 + 511403396 x^6 + 3581353209 x^7\\
&+ 21042935438 x^8 + 101911799164 x^9 + 387181329563 x^{10} + 1085915564370 x^{11}\\
&+ 2089469416853 x^{12} + 2453862488063 x^{13} + 1328763571296 x^{14}\\
a_2(z) 
&= 6265 x + 376201 x^2 + 10314990 x^3 + 173583039 x^4 + 2053551290 x^5 + 18689384000 x^6\\
&+ 138788289969 x^7 + 855550469291 x^8 + 4274768767815 x^9 + 16532786315885 x^{10}\\
&+ 46861231432855 x^{11} + 90848559682384 x^{12} + 107376751451872 x^{13} + 58465597137024 x^{14}\\
a_3(z)
&= 116459 x + 7115969 x^2 + 200101839 x^3 + 3492479508 x^4 + 43419223008 x^5 + 419167823718 x^6\\
&+ 3295778614744 x^7 + 21226671646068 x^8 + 109033294196141 x^9 + 428467687129601 x^{10}\\
&+ 1226396760506785 x^{11} + 2394725729734352 x^{12} + 2847983254477760 x^{13} + 1559082590320640 x^{14}\\
a_4(z)
&= 1449833 x + 90415745 x^2 + 2619811999 x^3 + 47714242940 x^4 + 626857433013 x^5\\ 
&+ 6431855317139 x^6 + 53404125506473 x^7 + 357773867642002 x^8 + 1882980021363602 x^9\\ 
&+ 7505614687422502 x^{10} + 21678572346541824 x^{11} + 42622306155271360 x^{12}\\ 
&+ 50994149398993920 x^{13} + 28063486625771520 x^{14}\\
a_5(z) 
&= 12707135 x + 812295414 x^2 + 24406700227 x^3 + 467289947852 x^4 + 6523210723638 x^5\\ 
&+ 71180537383858 x^6 + 621830080144679 x^7 + 4313683223069388 x^8 + 23190084180310873 x^9\\ 
&+ 93616781749765064 x^{10} + 272672550787560000 x^{11} + 539628856248745984 x^{12}\\
&+ 649372072346640384 x^{13} + 359212628809875456 x^{14}\\
a_6(z) 
&= 80168088 x + 5286545768 x^2 + 166136541193 x^3 + 3372698052990 x^4 + 50267896083671 x^5\\ 
&+ 582755302614765 x^6 + 5332050237612588 x^7 + 38130306660737921 x^8 + 208794282026692488 x^9\\
&+ 852473027477812608 x^{10} + 2502347102814884864 x^{11} + 4983330104745803776 x^{12}\\ 
&+ 6030418071535484928 x^{13} + 3352651202225504256 x^{14}\\
a_7(z)
&= -3 + 365390731 x + 25095939659 x^2 + 834433504065 x^3 + 18132408865284 x^4\\ 
&+ 289513052752098 x^5 + 3555914358207064 x^6 + 33901256261028254 x^7 + 248847657182595856 x^8\\
&+ 1384458034838074112 x^9 + 5709778330183280640 x^{10} + 16881992402381180928 x^{11}\\
&+ 33821264829712826368 x^{12} + 41149283911545126912 x^{13} + 22989608243832029184 x^{14}\\
a_8(z)
&=-119 + 1187495288 x + 86220830934 x^2 + 3079413154580 x^3 + 72315867551242 x^4\\ 
&+ 1238343447960010 x^5 + 16042203009007441 x^6 + 158492327221426800 x^7\\ 
&+ 1189441617391337408 x^8 + 6708389085147929088 x^9 + 27916706629722251264 x^{10}\\ 
&+ 83096997943085629440 x^{11} + 167427147653571936256 x^{12} + 204766748751880519680 x^{13}\\
&+ 114948041219160145920 x^{14}
\end{align*}
\begin{align*}
a_9(z)
&=-1988 + 2661685929 x + 209205847900 x^2 + 8181772833318 x^3 + 209365036375208 x^4\\
&+ 3838510233679961 x^5 + 52139774281593432 x^6 + 530898816397465152 x^7\\
&+ 4058869323511299584 x^8 + 23161327540107079680 x^9 + 97163280575812763648 x^{10}\\
&+ 291030344013883113472 x^{11} + 589573740171907563520 x^{12} + 724686857267692175360 x^{13}\\
&+ 408704146557013852160 x^{14}\\
a_{10}(z)
&=-17913 + 3854368588 x + 341084691626 x^2 + 14946334588157 x^3 + 418884673555446 x^4\\
&+ 8183050172059224 x^5 + 115757691604548224 x^6 + 1208423052153490432 x^7\\
&+ 9382000579851124736 x^8 + 54076398124284641280 x^9 + 228495346771705724928 x^{10}\\
&+ 688405122921636298752 x^{11} + 1401821100823034200064 x^{12} + 1731419384505167773696 x^{13}\\ 
&+ 980889951736833245184 x^{14}\\
a_{11}(z)
&=-91840 + 3161053504 x + 337306540599 x^2 + 16974499008929 x^3 + 520991685008480 x^4\\
&+ 10762401101720576 x^5 + 157420717177934336 x^6 + 1676798937863856128 x^7\\
&+ 13184178950079578112 x^8 + 76647275568092938240 x^9 + 325973004513076838400 x^{10}\\
&+ 987439454642002460672 x^{11} + 2020696994730586669056 x^{12} + 2507426478357575892992 x^{13}\\
&+ 1426749020708121083904 x^{14}\\
a_{12}(z)
&=-254016 + 1050345289 x + 154541941232 x^2 + 9126241254480 x^3 + 304769343501952 x^4\\
&+ 6593965302027264 x^5 + 99034088979775488 x^6 + 1071829647175024640 x^7\\
&+ 8515062058317774848 x^8 + 49869621987073064960 x^9 + 213334790308255236096 x^{10}\\
&+ 649528704899328507904 x^{11} + 1335454068490190716928 x^{12} + 1664540524159474597888 x^{13}\\
&+ 951166013805414055936 x^{14}\\
a_{13}(z)
&=-296009 - 12511233 x - 224214592 x^2 - 2204529600 x^3 - 12808201728 x^4 - 43783176192 x^5\\
&- 80957571072 x^6 - 61681958912 x^7\\
a_{14}(z) &= 1.
\end{align*}

\section{Acknowledgments}
We would like to thank the anonymous referee for their insightful comments and remarks which made a substantial improvement in the exposition of the paper.

The first author was funded by the Austrian Science Fund (FWF): W1214-N15, Project DK6.  

The second author was funded in whole by the Austrian Science Fund (FWF) Einzelprojekte 10.55776/P33933, ``Partition Congruences by the Localization Method."  For open access purposes, the authors have applied a CC BY public copyright license to any author-accepted manuscript version arising from this submission.  The second author is the Principal Investigator of the aforementioned FWF Project.

We would like to thank the Austrian Government and People for their generous support.


\begin{thebibliography}{X}

\bibitem{AndrewsPaule2} G.E. Andrews, P. Paule, ``MacMahon's Partition Analysis VIII: Plane Partition Diamonds," \textit{Advances in Applied Mathematics} 27, pp. 231-242 (2001).

\bibitem{AndrewsPaule} G.E. Andrews, P. Paule, ``MacMahon's Partition Analysis XIII: Schmidt Type Partitions and Modular Forms," \textit{Journal of Number Theory} 234, pp. 95-119 (2022).

\bibitem{Atkin} A.O.L. Atkin, ``Proof of a Conjecture of Ramanujan,'' \textit{Glasgow Mathematical Journal} 8, pp. 14-32 (1967).

\bibitem{AtkinL} A.O.L. Atkin, J. Lehner, ``Hecke Operators on $\Gamma_0(M)$,'' \textit{Mathematische Annalen} 185, pp. 134-160 (1970).

\bibitem{Banerjee} K. Banerjee, N.A. Smoot, ``The Localization Method Applied to $k$-Elongated Plane Partitions and Divisibility by 5," \textit{Mathematische Zeitschrift} 309 (46), (2025), \url{https://doi.org/10.1007/s00209-024-03672-9}.

\bibitem{BO} B.C. Berndt, K. Ono, ``Ramanujan's Unpublished Manuscript on the Partition and Tau Functions,'' \textit{The Andrews Festschrift}, Springer-Verlag, pp. 39-110 (2001).

\bibitem{dasilvaet} R. da Silva, M. Hirschhorn, J. Sellers, ``Elementary Proofs for Infinitely Many Congruences for $k$-Elongated Partition Diamonds,'' \textit{Discrete Mathematics} 345 (11), Article 113021 (2022).

\bibitem{Diamond} F. Diamond, J. Shurman, \textit{A First Course in Modular Forms}, 4th Printing., Springer Publishing (2016).

\bibitem{FK} H. M. Farkas, I. Kra, \textit{Riemann Surfaces}. Graduate Texts in Mathematics, 71.
Springer-Verlag, New York-Berlin (1980).

\bibitem{Knopp} M. Knopp, \textit{Modular Functions in Analytic Number Theory}, 2nd Ed., AMS Chelsea Publishing (1993).

\bibitem{Lehner} J. Lehner, \textit{Discontinuous Groups and Automorphic Functions}, Mathematical Surveys and Monographs Number 8, American Mathematical Society (1964).

\bibitem{Newman}  M. Newman, ``Construction and Application of a Class of Modular Functions (II)," \textit{Proc. London Math. Soc.}, 3 (1959).

\bibitem{Paule}  P. Paule, S. Radu, ``The Andrews--Sellers Family of Partition Congruences," \textit{Advances in Mathematics} 230, pp. 819-838 (2012).

\bibitem{PRW} P. Paule, S. Radu, ``A Proof of the Weierstraß Gap Theorem not Using the Riemann--Roch Formula," \textit{Ann. Comb.}, 23, pp. 963-1007 (2019).

\bibitem{Radu} S. Radu, ``An Algorithmic Approach to Ramanujan--Kolberg Identities," \textit{Journal of Symbolic Computation}, 68, pp. 225-253 (2015).

\bibitem{Ramanujan} S. Ramanujan, ``Some Properties of $p(n)$, the Number of Partitions of $n$", \textit{Proceedings of the Cambridge Philosophical Society} 19, pp. 207-210 (1919).

\bibitem{SellersS} J.A. Sellers, N.A. Smoot ``On the Divisibility of 7-Elongated Plane Partition Diamonds by Powers of 8," \textit{International Journal of Number Theory} Volume 20 (1), (2024), \url{https://doi.org/10.1142/S1793042124500131}.

\bibitem{Smoot2} N.A. Smoot, ``A Congruence Family For 2-Elongated Plane Partitions: An Application of the Localization Method," \textit{Journal of Number Theory} 242, pp. 112-153 (2023).

\bibitem{Smoot0} N.A. Smoot, ``A Single-Variable Proof of the Omega SPT Congruence Family Over Powers of 5,'' \textit{Ramanujan Journal} 62, pp. 1-45 (2023), \url{https://doi.org/10.1007/s11139-023-00747-9}.

\bibitem{Smoot1} N.A. Smoot, ``On the Computation of Identities Relating Partition Numbers in Arithmetic Progressions with Eta Quotients: An Implementation of Radu's Algorithm,'' \textit{Journal of Symbolic Computation} 104, pp. 276-311 (2021).

\bibitem{Watson} G.N. Watson, ``Ramanujans Vermutung über Zerfallungsanzahlen," \textit{J. Reine Angew. Math.} 179, pp. 97-128 (1938).

\end{thebibliography}
\end{document}